\documentclass[11pt]{amsart}
\usepackage[utf8]{inputenc}
\addtolength{\hoffset}{-1cm} 
\addtolength{\textwidth}{2cm}
\usepackage{amsmath,amssymb, mathrsfs}
\usepackage{bbm}
\parskip 4pt plus2pt minus2pt
\usepackage{lipsum}

\usepackage{centernot}
\usepackage{cite}
\usepackage{xcolor}

\usepackage{tikz}
\usetikzlibrary{arrows}
\usetikzlibrary{patterns}
\usetikzlibrary{shapes}
\usepackage{mathrsfs}
\usepackage{bbm}
\usepackage{enumerate}
\usepackage[shortlabels]{enumitem} 
\usepackage{tikz}
\usepackage{tikzcd}

\usepackage{amssymb}
\usepackage{tikz-cd}
\usetikzlibrary{cd}

\newcommand{\R}{\mathbb{R}}
\newcommand{\N}{\mathbb{N}}

\newcommand{\de}{\partial}
\renewcommand{\-}{\backslash}
\newcommand{\D}{\mathbb{D}}

\renewcommand{\a}{\alpha}

\newcommand{\f}{\varphi}
\newcommand{\e}{\varepsilon}
\newcommand{\w}{{\omega}}

\newcommand{\transv}{\mathrel{\text{\tpitchfork}}}
\makeatletter
\newcommand{\tpitchfork}{%
  \raise-0.1ex\vbox{
    \baselineskip\z@skip
    \lineskip-.52ex
    \lineskiplimit\maxdimen
    \m@th
    \ialign{##\crcr\hidewidth\smash{$-$}\hidewidth\crcr$\pitchfork$\crcr}
  }%
}
\makeatother



\newcommand{\PP}{\mathbb{P}}

\newcommand{\Prob}{(\Omega, \mathfrak{S},\P)}
\renewcommand{\P}{\mathbb{P}}
\newcommand{\E}{\mathbb{E}}
\newcommand{\nrw}{\Rightarrow}
\newcommand{\spt}{\text{supp}}
\newcommand{\g}[3]{\mathcal{G}^{#1}(#2,\R^{#3})}

\newcommand{\Cr}[3]{\mathcal{C}^{#1}(#2,\R^{#3})}
\newcommand{\nrm}[3]{\|{#1}\|_{#2,#3}}
\newcommand{\G}{\mathscr{G}}
\newcommand{\K}{\mathcal{K}}

\usepackage{hyperref}
\hypersetup{
	colorlinks=true,       
	linkcolor=blue,         
	citecolor=blue}
\DeclareRobustCommand{\SkipTocEntry}[5]{}


 \usepackage[usenames,dvipsnames]{pstricks}
 \usepackage{epsfig}
 \usepackage{pst-grad} 
 \usepackage{pst-plot} 

\newtheorem{thm}{Theorem}
\newtheorem{lemma}[thm]{Lemma}
\newtheorem{cor}[thm]{Corollary}
\newtheorem{prop}[thm]{Proposition}

\newtheorem*{stella}{($*$)}

\theoremstyle{definition}

\newtheorem{defi}[thm]{Definition}
\newtheorem{remark}[thm]{Remark}

\newtheorem{example}[thm]{Example}
\newcommand{\be}{\begin{equation}}
\newcommand{\ee}{\end{equation}}

\usepackage{mathtools} 
\mathtoolsset{showonlyrefs,showmanualtags} 

\numberwithin{equation}{section}

\title{Differential topology of gaussian random fields}
\author{Antonio Lerario}
\thanks{\textbf{Antonio Lerario} \href{mailto:lerario@sissa.it}{lerario@sissa.it}  SISSA, Trieste, Italy. }
\author{Michele Stecconi}
\thanks{\textbf{Michele Stecconi} \href{mailto:michele.stecconi@univ-nantes.fr}{michele.stecconi@univ-nantes.fr} Laboratoire de Mathématiques Jean Leray, Nantes University (UMR 6629 du CNRS), Nantes, France.}
\begin{document}
\begin{abstract}
Motivated by numerous questions in random geometry, given a smooth manifold $M$, we approach a systematic study of the differential topology of Gaussian random fields (GRF) $X:M\to \R^k$, that we interpret as random variables with values in $\mathcal{C}^r(M, \R^k)$, inducing on it a Gaussian measure.

When the latter is given the weak Whitney topology, the convergence in law of $X$ allows to compute the limit probability of certain events in terms of the probability distribution of the limit. This is true, in particular, for the events of a geometric or topological nature, like: ``$X$ is transverse to $W$'' or ``$X^{-1}(0)$ is homeomorphic to $Z$''.

We relate the convergence in law of a sequence of GRFs with that of their covariance structures, proving that in the smooth case ($r=\infty$), the two conditions coincide, in analogy with what happens for finite dimensional Gaussian measures. We also show that this is false in the case of finite regularity ($r\in\N$), although the convergence of the covariance structures in the $\mathcal{C}^{r+2}$ sense is a sufficient condition for the convergence in law of the corresponding GRFs in the $\mathcal{C}^r$ sense.


We complement this study by proving an important technical tools: an infinite dimensional, probabilistic version of the Thom transversality theorem, which ensures that, under some conditions on the support, the jet of a GRF is almost surely transverse to a given submanifold of the jet space.
%
\end{abstract}
\maketitle
\addtocontents{toc}{\SkipTocEntry}
\subsection*{Keywords} Random geometry $\cdot$ Gaussian Measures $\cdot$ Smooth Random Fields $\cdot$ Limit Probabilities $\cdot$ Narrow Topology $\cdot$ Transversality.
\addtocontents{toc}{\SkipTocEntry}
\subsection*{Declaration}
This research was conducted while the second author was a PhD student at SISSA, Trieste, Italy, supported by the SISSA PhD Fellowship in Geometry and Mathematical Physics.  
\tableofcontents
\section{Introduction}
\subsection{Overview}The subject of \emph{Gaussian random fields} is classical and largely developed (see for instance\footnote{This list is by no means complete!} \cite{AdlerTaylor, NazarovSodin2, EdelmanKostlan95, bogachev}). Motivated by problems in differential topology, in this paper we adopt a point of view which complements the classical one and we view Gaussian random fields as random variables in the space of smooth maps. Inside this space there is a rich structure coming from the geometric conditions that we can impose on the maps we are studying. There are some natural events, described by differential properties of the maps under consideration (e.g. being transverse to a given submanifold; having a certain number of critical points;  having a fixed homotopy type for the set of points satisfying some regular equation written in term of the field...), which are of specific interest to differential topology and it is desirable to have a verifiable notion of convergence of Gaussian random fields which ensures the convergence of the probability of these natural events. At the same time, once the space of functions is endowed with a probability distribution, it is natural to investigate the stability of these properties using the probabilistic language (replacing the notion of ``generic'' from differential topology with the notion of ``probability one''). 

The purpose of this paper is precisely to produce a general framework for investigating this type of questions. Specifically, Theorem \ref{thm:2} below allows to study the limit probabilities of these natural events for a family of Gaussian random fields (the needed notion of convergence is ``verifiable'' because it is written in terms of the convergence of the covariance functions of these fields). Theorem \ref{thm:1} relates this notion to the convergence of the fields in an appropriate topology: we achieve this by proving that there is a topological embedding of the set of smooth Gaussian random fields into the space of covariance functions. The switch from ``generic'' to ``probability one'' happens with Theorem \ref{transthm2}, which gives a probabilistic version of the Thom Transversality Theorem (again the needed conditions for this to hold can be checked using the covariance function of the field). This is actually a corollary of the more general Theorem \ref{thm:transthm}, which provides an infinite dimensional, probabilistic version, of the Parametric Transversality Theorem.

\subsection{Topology of random maps} Let $M$ be a smooth $m$-dimensional manifold (possibly with boundary). We denote by $E^r=\mathcal{C}^r(M, \R^k)$ the space of differentiable maps endowed with the weak Whitney topology, where $r\in \N\cup\{\infty\}$, and we call $\mathscr{P}(E^r)$ the set of probability measures on $\mathcal{C}^r(M, \R^k)$, endowed with the narrow topology (i.e. the weak* topology of $\mathcal{C}_b(E^r)^*$, see Definition \ref{defi:narrowtop}). 

In this paper we are interested in a special subset of $\mathscr{P}(E^r)$, namely the set $\G(E^r)$ of \emph{Gaussian measures}: these are probability measures with the property that for every finite set of points $p_1, \ldots, p_j\in M$ the evaluation map $\varphi:C^r(M, \R^k)\to \R^{jk}$ at these points induces (by pushforward) a Gaussian measure on $\R^{jk}.$ \footnote{In remark \ref{rem:bridge} we explain how this definition is equivalent to that of a Gaussian measure on the topological vector space $E^r$.} We denote by $\mathcal{G}^r(M, \R^k)$ the set of $\mathcal{C}^r$ \emph{Gaussian random fields} (GRF) i.e. random variables with values in $E^r$ that induce a Gaussian measure (see Definition \ref{def:GRF} below).
\begin{example}
The easiest example of GRF is that of a random function of the type $X=\xi_1 f_1 +\dots +\xi_n f_n$, where $\xi_1,\dots,\xi_n$ are independent Gaussian variables and $f_i\in \Cr rMk$. A slightly more general example is an almost surely convergent series
\be\label{eq:KarLo}
X=\sum_{n=0}^\infty \xi_n f_n.
\ee 
In fact, a standard result in the general theory of Gaussian measures (see \cite{bogachev}) is that every GRF admits such a representation, which is called the Karhunen-Loève expansion. We give a proof of this in Appendix \ref{app:rgrf} (see Theorem \ref{thm:rep}), adapted to the language of the present paper.
\end{example}
\begin{remark}One can define a Gaussian random section of a vector bundle $E\to M$ in an analogous way (the evaluation map here takes values in the finite dimensional vector space $E_{p_1}\oplus\dots\oplus E_{p_j}$). We choose to discuss only the case of  trivial vector bundles to avoid a complicated notation, besides, any vector bundle can be linearly embedded in a trivial one, so that any Gaussian random section can be viewed as a Gaussian random field. For this reason, the results we are going to present regarding GRFs are true, mutatis mutandis, for Gaussian random sections of vector bundles.
\end{remark}

We have the following sequence of continuous injections:
\be \G(E^{\infty})\subset \cdots \subset \G(E^r)\subset \cdots \subset \G(E^0)\subset \mathscr{P}(E^0),\ee
with the topologies induced by the inclusion $\G(E^r)\subset  \mathscr{P}(E^r)$ as a closed subset.

By definition, a Gaussian random field $X$ induces a Gaussian measure on $\mathcal{C}^r(M, \R^k)$, measure that we denote by $[X]$. Two fields are called \emph{equivalent} if they induce the same measures. A Gaussian measure $\mu=[X]\in \G(E^r)$ gives rise to a differentiable function $K_\mu\in \mathcal{C}^{r}(M\times M, \R^{k\times k})$ called the \emph{covariance function} and defined for $p,q\in M$ by:
\be K_\mu(p,q)=\E \left\{X(p) X(q)^T\right\}=\int_{E^r} f(p)f(q)^Td\mu(f).\ee
Equivalent fields give rise to the same covariance function, and to every covariance function there corresponds a unique (up to equivalence) Gaussian field. 

\begin{remark}In this paper we are interested in random fields up to equivalence, this is the reason why we choose to focus on the narrow topology. Indeed the notion of narrow convergence of a family $\{X_d\}_{d\in \mathbb{N}}$ of GRFs corresponds to the notion of \emph{convergence in law} of random elements in a topological space and it regards only the probability measures $[X_d]$. By the Skorohod theorem (see \cite[Theorem 6.7]{Billingsley}) this notion corresponds to almost sure convergence, up to equivalence of GRFs. In case one is interested in the almost sure convergence or in the convergence in probability of a particular sequence of GRFs one should be aware that these two notions take into account also the joint probabilities. For example, convergence in probability is equivalent to narrow convergence of the couple $(X_d,X)\nrw (X,X)$ (see Theorem \ref{lem:convprob}).
\end{remark}
Our first theorem translates convergence in $\G(E^r)$ of Gaussian measures
with respect to the narrow topology in terms of the corresponding sequence of covariance functions in the space $\mathcal{C}^{r}(M\times M, \R^{k\times k})$, endowed with the weak Whitney topology. 

\begin{thm}[Measure-Covariance]\label{thm:1}The natural map
\be\label{eq:covariance} \mathcal{K}^r:\G(E^r)\to \mathcal{C}^{r}(M\times M, \R^{k\times k}),\ee
given by $\mathcal{K}^r:\mu\mapsto K_\mu,$ is injective and continuous for all $r\in \N\cup\{ \infty\}$; when $r=\infty$ this map is also a closed topological embedding\footnote{A continuous injective map that is an homeomorphism onto its image.}.
\end{thm}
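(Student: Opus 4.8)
The plan is to break the statement into three assertions: injectivity, continuity, and — in the smooth case — the embedding/closedness properties. Injectivity is the easy part: as already noted in the excerpt, a Gaussian measure is determined by its finite-dimensional marginals, and for a centered Gaussian vector the marginal at points $p_1,\dots,p_j$ is determined by its covariance matrix, whose blocks are exactly $K_\mu(p_a,p_b)$. Hence $K_\mu$ determines all finite-dimensional pushforwards, and two Gaussian measures with the same covariance function coincide. (If one does not assume centered fields, the mean is itself recovered from the one-point marginals, but in this setup the fields are centered.)

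For continuity, I would use the fact that the narrow topology on $\G(E^r)$, restricted to Gaussian measures, is generated by the evaluation-at-finitely-many-points maps together with convergence of the associated finite-dimensional Gaussian laws; concretely, a net $\mu_\alpha \to \mu$ narrowly forces, for each finite tuple of points and each finite collection of derivatives up to order $r$, convergence of the corresponding jet-evaluation Gaussian vectors, hence convergence of their covariance matrices. This is precisely pointwise convergence of $K_{\mu_\alpha}$ and all of its partial derivatives up to order $r$ at every point of $M\times M$. To upgrade pointwise convergence of the derivatives to convergence in the weak Whitney topology on $\mathcal{C}^r(M\times M,\R^{k\times k})$, I would invoke an equicontinuity/equiboundedness argument: narrow convergence gives uniform tightness, which for Gaussian measures translates (via Fernique-type estimates and the structure of the Cameron–Martin space) into local uniform bounds on one more derivative, so Arzelà–Ascoli promotes the pointwise convergence of the order-$r$ derivatives to local uniform convergence. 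This is the step I expect to be the main obstacle: carefully extracting, from narrow convergence alone, enough uniform control on the $(r)$-jets of the covariance to run a compactness argument on every compact subset of $M\times M$. The key technical input here is the interplay between the weak Whitney topology on $E^r$ (whose compact sets are controlled by bounds on a $\mathcal{C}^{r}$-seminorm over compacta plus a mild equicontinuity of the top derivatives) and Gaussian concentration.

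For the smooth case $r=\infty$, continuity follows from the $r<\infty$ cases since the $\mathcal{C}^\infty$ topologies on both sides are the projective limits of the $\mathcal{C}^r$ ones. The substantive new content is that $\mathcal{K}^\infty$ is a homeomorphism onto its image and that the image is closed. For the homeomorphism, I would show the inverse is continuous: given covariance functions $K_{\mu_\alpha}\to K$ in $\mathcal{C}^\infty(M\times M,\R^{k\times k})$, one must produce narrow convergence $\mu_\alpha \to \mu$. The idea is that $\mathcal{C}^\infty$-convergence of covariances controls all jet-evaluation laws simultaneously, and — crucially, in the smooth category — it also yields uniform tightness of the family $\{\mu_\alpha\}$ in $E^\infty$: a $\mathcal{C}^{r+1}$-bound on the covariance near a compact set gives, via Kolmogorov continuity / Garsia–Rodemich–Rumsey type estimates, a uniform modulus of continuity for the order-$r$ derivatives of the fields, hence the $\mu_\alpha$ concentrate on a fixed compact subset of $E^\infty$ up to $\e$. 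Tightness plus convergence of all finite-dimensional marginals gives narrow convergence by Prokhorov's theorem, so $\mathcal{K}^\infty$ is an embedding. Finally, closedness of the image: if $K_{\mu_\alpha}\to K$ in $\mathcal{C}^\infty$, the limit $K$ is manifestly symmetric and positive semidefinite as a kernel (these are closed conditions under pointwise convergence), so it is the covariance of some Gaussian field by the standard existence theorem; the tightness argument above then shows the corresponding Gaussian measure lives in $\G(E^\infty)$ rather than merely in $\G(E^0)$, so the image is closed. The place where finite regularity genuinely fails — and hence why the embedding statement is restricted to $r=\infty$ — is exactly this tightness step: a $\mathcal{C}^r$-bound on the covariance is not enough to keep the $r$-th derivatives of the field equicontinuous, which is consistent with the counterexample the introduction alludes to.
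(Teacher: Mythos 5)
Your decomposition (injectivity, continuity, then properness/closedness for $r=\infty$) matches the paper's, and your treatment of the $r=\infty$ embedding is essentially the argument the paper uses: convergence of covariances in all $\mathcal{C}^{(s,s)}$-seminorms gives uniform moment bounds on all derivatives of the fields (the paper gets these from the Adler--Taylor entropy bound, Theorem \ref{Gaussinequality}, rather than Kolmogorov/GRR, but it notes the latter works too), hence tightness, hence relative compactness by Prohorov; combined with closedness of $\G(E^\infty)$ in $\mathscr{P}(E^\infty)$ and injectivity plus continuity of $\K^\infty$ this yields the closed embedding (the paper packages this as ``$\K^\infty$ is proper, and a proper continuous injection between metrizable spaces is a closed embedding,'' which is equivalent to your ``inverse is continuous on the image plus the image is closed''). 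Where you genuinely diverge is the continuity of $\K^r$. The paper's route is much shorter: by Skorohod's representation theorem one may assume $X_n\to X$ almost surely in $E^r$, so for any convergent sequence of points the Gaussian variables $\partial_\a X_n^i(x_n)$, $\partial_\b X_n^j(y_n)$ converge almost surely, hence (a Gaussian-specific fact) in $L^2$, and $\partial_{(\a,\b)}K_n(x_n,y_n)=\E\{\partial_\a X_n^i(x_n)\,\partial_\b X_n^j(y_n)\}$ converges; by the sequential characterization of the weak topology (Lemma \ref{Lemma:14}) this is exactly $\mathcal{C}^{r,r}$ convergence, with no Arzel\`a--Ascoli step needed. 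Your route --- pointwise convergence of the derivatives of $K_n$ from convergence of the jet marginals, upgraded to local uniform convergence via tightness and equicontinuity --- can be made to work, but be careful with one claim: tightness in $E^r$ for finite $r$ does \emph{not} give ``local uniform bounds on one more derivative''; compact subsets of $\mathcal{C}^r$ are characterized by equicontinuity of the order-$r$ derivatives, not by $\mathcal{C}^{r+1}$ bounds. What you actually extract from tightness is a common modulus of continuity for the $r$-jets on a set of measure $>1-\e$, and you then need to convert a probabilistic modulus into an $L^2$ modulus for the Gaussian variables $\partial_\a X_n(x)-\partial_\a X_n(x')$ (a median-versus-variance comparison for Gaussians suffices; full Fernique is not needed, and the Cameron--Martin space plays no role here). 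Once that is done your Arzel\`a--Ascoli argument closes the gap, but the Skorohod argument buys you all of this for free.
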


We observe at this point that the condition $r=\infty$ in the second part of the statement of Theorem \ref{thm:1} is necessary: as Example \ref{escempio} and Theorem \ref{thm:counter} show, it is possible to build a family of $\mathcal{C}^r$ ($r\neq \infty$) GRFs with covariance structures which are $\mathcal{C}^r$ converging but such that the family of GRFs does not converge narrowly to the GRF corresponding to the limit covariance.

Theorem \ref{thm:1} is especially useful when one has to deal with a family of Gaussian fields depending on some parameters, as it allows to infer asymptotic properties of probabilities on $\mathcal{C}^\infty(M, \R^k)$ from the convergence of the covariance functions (notice that this ``implication'' goes the opposite way of the arrow in \eqref{eq:covariance}). 

\begin{thm}[Limit probabilities]\label{thm:2}Let $\{X_d\}_{d\in \mathbb{N}}\subset \mathcal{G}^{r}(M,\R^k)$ be a sequence of Gaussian random fields such that the sequence $\{K_d\}_{d\in \mathbb{N}}$ of the associated covariance functions converges to $K$ in $\mathcal{C}^{r+2,r+2}(M\times M, \R^{k\times k})$\footnote{It is the space of functions $K(x,y)$ having continuous partial derivatives of order at least $r$ in both variables $x$ and $y$, see Section \ref{sec:spasmo}.}. Then there exists $X\in \g rMk$ with $K_X=K$ such that for every Borel set $A\subset E^r$ we have
\be\label{eq:limitprob} \PP(X\in \mathrm{int}(A))\leq \liminf_{d\to \infty}\PP(X_d\in A)\leq \limsup_{d\to \infty}\PP(X_d\in A)\leq \PP(X\in \overline{A}).\ee
In particular, if $\P(X\in\de A)=0$, then the limit exists:
\be\label{eq:limitprob2}
\lim_{d\to \infty}\P(X_d\in A)=\P(X\in A).
\ee
\end{thm}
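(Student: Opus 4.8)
The inequalities \eqref{eq:limitprob} are exactly the Portmanteau characterisation of the narrow convergence $[X_d]\nrw[X]$ in $\mathscr{P}(E^r)$, and \eqref{eq:limitprob2} is its standard consequence when $\P(X\in\de A)=0$; so the statement reduces to exhibiting a field $X\in\g rMk$ with $K_X=K$ and showing $[X_d]\nrw[X]$. The plan is the classical Prokhorov scheme: first prove that $\{[X_d]\}_{d\in\N}$ is tight in $\mathscr{P}(E^r)$, hence relatively compact for the narrow topology; then show that every subsequential narrow limit is a Gaussian measure with covariance $K$, and use Theorem \ref{thm:1} to see that there is only one such measure, so that the whole sequence converges to it.

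Tightness is the substantial step, and the point where the hypothesis $K_d\to K$ in $\mathcal{C}^{r+2,r+2}$ is spent. Since $K_d\in\mathcal{C}^{r+2,r+2}$, each $X_d$ admits a $\mathcal{C}^{r+1}$ modification, with $\partial^\beta X_d$ a Gaussian field of covariance $\partial_x^\beta\partial_y^\beta K_d$ for $|\beta|\le r+1$, and $\partial_x^\beta\partial_y^\beta K_d\in\mathcal{C}^{1,1}$ — this is where the top-order regularity of $K_d$ enters. Fixing a locally finite atlas and a compact exhaustion $M=\bigcup_n M_n$, a second mixed-difference estimate for $\partial_x^\beta\partial_y^\beta K_d$ gives, in a chart over $M_n$,
\[
\E\,\bigl|\partial^\beta X_d(p)-\partial^\beta X_d(q)\bigr|^2\ \le\ C\,|p-q|^2 ,
\]
with $C$ controlled by a $\mathcal{C}^{r+2,r+2}$ norm of $K_d$ over $M_n\times M_n$ — hence uniform in $d$, because $K_d\to K$ in $\mathcal{C}^{r+2,r+2}$. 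By Gaussianity this upgrades to $\E|\partial^\beta X_d(p)-\partial^\beta X_d(q)|^{2\ell}\le C_\ell|p-q|^{2\ell}$ for every $\ell$, and the Kolmogorov–Chentsov theorem (with $2\ell>m$) then provides, uniformly in $d$, a bound on $\E\bigl[\|X_d\|_{\mathcal{C}^{r+1}(M_n)}^{2\ell}\bigr]$. Together with a Chebyshev estimate, the compactness of the inclusion $\mathcal{C}^{r+1}(M_n)\hookrightarrow\mathcal{C}^r(M_n)$ (Arzelà–Ascoli, $M_n$ compact), and a diagonal assembly over $n$, this yields for each $\e>0$ a compact subset of $E^r$ of $[X_d]$-measure at least $1-\e$ for all $d$ — i.e.\ the sought tightness.

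For the identification step, Prokhorov's theorem \cite{Billingsley} gives narrow limit points of $\{[X_d]\}$ in $\mathscr{P}(E^r)$, and since $\G(E^r)$ is closed in $\mathscr{P}(E^r)$ every such limit $\mu$ lies in $\G(E^r)$. If $[X_{d_j}]\nrw\mu$ along a subsequence, then continuity of $\mathcal{K}^r$ (Theorem \ref{thm:1}) gives $K_\mu=\lim_j K_{d_j}=K$ (the last equality from the hypothesis, since $\mathcal{C}^{r+2,r+2}\hookrightarrow\mathcal{C}^{r}(M\times M,\R^{k\times k})$); in particular $K$ is a covariance function, so $\mu=[X]$ for some $X\in\g rMk$ with $K_X=K$, and injectivity of $\mathcal{K}^r$ shows $\mu$ to be independent of the subsequence. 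Hence $[X_d]\nrw[X]$, and the Portmanteau theorem yields \eqref{eq:limitprob} and, when $\P(X\in\de A)=0$, \eqref{eq:limitprob2}.

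The only genuinely delicate point is the tightness estimate: one must run the Kolmogorov–Chentsov argument chart by chart on a manifold that may be non-compact and may have boundary, checking that every constant depends on $K_d$ solely through a $\mathcal{C}^{r+2,r+2}$ norm — so that the assumed $\mathcal{C}^{r+2,r+2}$-convergence makes the bounds uniform in $d$ — and then assemble the local compact sets into a global one adapted to the weak Whitney topology. Everything else is a direct application of Theorem \ref{thm:1} and of the standard Prokhorov–Portmanteau machinery, which is available because $E^r$ with the weak Whitney topology is a Polish space.
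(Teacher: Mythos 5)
Your proposal is correct, and its overall architecture is exactly the paper's: uniform bounds on the $\mathcal{C}^{r+2,r+2}$ norms of the covariances give tightness of $\{[X_d]\}$ in $\mathscr{P}(E^r)$, Prohorov gives relative compactness, closedness of $\G(E^r)$ plus the injectivity and continuity of $\mathcal{K}^r$ from Theorem \ref{thm:1} identify the unique subsequential limit as the Gaussian measure with covariance $K$, and Portmanteau converts narrow convergence into \eqref{eq:limitprob}--\eqref{eq:limitprob2}. This is precisely the content of Theorem \ref{rproper} (implications \ref{itm:Ir} $\Rightarrow$ \ref{itm:IIr} $\Rightarrow$ \ref{itm:IIIr}) combined with the first part of Theorem \ref{thm:1}. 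Where you genuinely diverge is in the engine producing the uniform moment bound $\sup_d\E\{\|X_d\|_{Q,r+1}\}<\infty$: you run Kolmogorov--Chentsov on each derivative field $\de^\beta X_d$, $|\beta|\le r+1$, using the \emph{second} mixed difference of $\de^{(\beta,\beta)}K_d$ (hence the full $\mathcal{C}^{1,1}$ regularity of the top-order block of $K_d$) to get $\E|\de^\beta X_d(p)-\de^\beta X_d(q)|^2\le C|p-q|^2$, then upgrade moments by Gaussianity and chain. The paper instead proves Theorem \ref{Gaussinequality}, $\E\{\|X\|_{Q,r+1}\}\le C\sqrt{\|K_X\|_{Q\times Q,(r+2,r+2)}}$, via the Dudley-type entropy bound of Theorem \ref{Adlerineq}: there only a \emph{first-order} difference estimate $\|\de^\beta X(p)-\de^\beta X(q)\|_{L^2}\le\Lambda|p-q|^{1/2}$ is needed (Lemma \ref{zeroineqlemma}), since the metric entropy integral $\int_0^\Delta\sqrt{\ln N(\e)}\,d\e$ converges regardless of the weak H\"older exponent. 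The paper explicitly notes (before Theorem \ref{Gaussinequality}, citing \cite[Sec.~A.9]{NazarovSodin2}) that the Kolmogorov route you take is available; the entropy route buys a shorter, quantitative inequality with an explicit constant depending only on $Q$, $r$, $k$, and avoids both the high-moment bookkeeping and the separate discussion of $\mathcal{C}^{r+1}$ modifications, while your route is more elementary in that it uses no Gaussian-process machinery beyond Kolmogorov--Chentsov. Both correctly localize to a countable family of embedded disks and assemble the compact set $\{f:\|f\|_{Q_\ell,r+1}\le 2^{\ell+1}A_\ell/\e\}$ by a Markov--Borel--Cantelli type estimate, so the delicate point you flag is handled the same way in the paper.
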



\subsection{The support of a Gaussian random field}
The previous Theorem \ref{thm:2} raises two natural questions: 
\begin{enumerate}[$(1)$]
\item\label{itm:Q1} when is the leftmost probability in \eqref{eq:limitprob} strictly positive?
\item\label{itm:Q2} For which sets $A\subset E^\infty$ do we have  \eqref{eq:limitprob2}?
\end{enumerate}
Answering question \ref{itm:Q1} for a given Gaussian random field $X\in \mathcal{G}^r(M, \R^k)$, amounts to determine its topological support:
\be\label{eq:defsupport} \textrm{supp}(X)=\{\textrm{$f\in E^r$ such that $\PP(X\in U)>0$ for every neighborhood $U$ of $f$}\}.\ee

We provide a description of the support of a Gaussian field $X=(X^1, \ldots, X^k)\in \mathcal{G}^{r}(M, \R^k)$ in terms of its covariance function $K_X$. 
 \begin{thm}\label{thm:dtgrf:3}
Let $X\in \g rMk$, consider all functions $h_p^j\in E^r$ of the form 
\be \label{suppbasiseq}
\begin{aligned}
\left. h^j_p(q) \right. = \begin{pmatrix}
K_X(q,p)^{1j} \\
\vdots \\
K_X(q,p)^{kj}
\end{pmatrix},\quad \text{for $p\in M$ and $j\in\{1,\dots,k\}$}. 
\end{aligned}
\ee
then
\be 
\spt(X)=\overline{\textrm{span}\{h_p^j\colon p\in M,j=1\dots k\}}^{E^r}.
\ee
\end{thm}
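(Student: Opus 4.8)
The plan is to reduce the statement to the classical description of the topological support of a Gaussian measure as the closure of its Cameron--Martin space (equivalently, of its reproducing kernel Hilbert space), and then to recognise the functions $h^j_p$ as the generators of that space coming from point evaluations. First I would recall, from the general theory of Gaussian measures on locally convex spaces \cite{bogachev} (or from the Karhunen--Lo\`eve representation of Theorem \ref{thm:rep}), that for the centered Gaussian measure $\mu=[X]$ on $E^r$ one has $\spt(X)=\overline{H_\mu}^{E^r}$, where $H_\mu$ is the Cameron--Martin space, and that $H_\mu$ is the closure, in its own (finer) norm, of the image $R_\mu\big((E^r)^*\big)$ of the covariance operator $R_\mu\colon (E^r)^*\to E^r$ determined by $\langle R_\mu\ell,\ell'\rangle=\int_{E^r}\ell(f)\ell'(f)\,d\mu(f)$. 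Since the Cameron--Martin topology dominates the one induced from $E^r$, this yields
\be
\spt(X)=\overline{R_\mu\big((E^r)^*\big)}^{E^r}.
\ee

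Next I would compute $R_\mu$ on the point-evaluation functionals $\delta^j_p\colon f\mapsto f^j(p)$, which are continuous for the weak Whitney topology. Testing the defining relation of $R_\mu$ against another point evaluation $\delta^i_q$ gives $\big(R_\mu\delta^j_p\big)^i(q)=\E\{X^j(p)X^i(q)\}=K_X(q,p)^{ij}$, and since point evaluations separate the points of $E^r$ this identifies $R_\mu\delta^j_p$ with the function $h^j_p$ of \eqref{suppbasiseq}. In particular $h^j_p\in R_\mu\big((E^r)^*\big)\subseteq\spt(X)$ for all $p,j$, so one inclusion, $\overline{\mathrm{span}\{h^j_p\}}^{E^r}\subseteq\spt(X)$, is immediate. (In the Karhunen--Lo\`eve picture $X=\sum_n\xi_n f_n$ the same conclusion reads $h^j_p=\sum_n f^j_n(p)f_n\in\overline{\mathrm{span}\{f_n\}}^{E^r}=\spt(X)$, using $K_X(q,p)=\sum_n f_n(q)f_n(p)^T$ and the fact that the support of an almost surely convergent Gaussian series equals the closed span of its terms.)

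For the reverse inclusion I would argue by separation: given $f\in E^r$ not in the closed subspace $\overline{\mathrm{span}\{h^j_p\}}^{E^r}$, local convexity of the weak Whitney topology and Hahn--Banach produce $\ell\in(E^r)^*$ with $\ell(f)\neq0$ and $\ell(h^j_p)=0$ for all $p,j$. By symmetry of the covariance operator, $\big(R_\mu\ell\big)^j(p)=\langle R_\mu\ell,\delta^j_p\rangle=\langle R_\mu\delta^j_p,\ell\rangle=\ell(h^j_p)=0$ for all $p,j$, hence $R_\mu\ell=0$; therefore $\ell$ annihilates all of $R_\mu\big((E^r)^*\big)$ and hence the whole of $\spt(X)$, which forces $f\notin\spt(X)$. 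Combined with the previous step this gives $\spt(X)=\overline{\mathrm{span}\{h^j_p\}}^{E^r}$. The delicate point is not the separation argument itself but the functional-analytic input from the first step — that $\mathcal{C}^r(M,\R^k)$ with the weak Whitney topology is locally convex, that $\spt(\mu)=\overline{H_\mu}$, and that $R_\mu$ genuinely lands in $E^r$ (rather than in a bidual or completion). I expect this is cleanest to handle through the Karhunen--Lo\`eve expansion already available in Theorem \ref{thm:rep}, where the only substantial facts needed become the $\ell^2$-summability of the coefficients $\ell(f_n)$ (equivalently, the finiteness of $\mathrm{Var}(\ell(X))$) and the behaviour of supports under sums of independent Gaussians and under continuous linear images.
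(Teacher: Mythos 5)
Your proposal is correct and follows the same overall strategy as the paper: both reduce the statement to Bogachev's description $\spt(X)=\overline{\mathcal{H}_X}^{E^r}$ of the support of a Radon Gaussian measure as the closure of its Cameron--Martin space, and both identify the functions $h^j_p$ as the images of the point-evaluation functionals $\delta^j_p$ under the covariance structure. The one place where you diverge is in closing the reverse inclusion. The paper constructs the map $\rho_X\colon\Gamma_X\to E^r$, $\rho_X(\gamma)=\E\{X(\cdot)\gamma\}$, shows it is a continuous injection whose image is exactly $\mathcal{H}_X$, and for that identification it invokes the density of $\mathrm{span}\{\delta^j_p\}$ in $(E^r)^*$ (Theorem \ref{thm:deltadense}, proved in Appendix \ref{app:dual} via Riesz representation); density of $\mathrm{span}\{h^j_p\}$ in $\mathcal{H}_X$ then comes for free from the isometry with $\Gamma_X=\overline{\mathrm{span}\{X^j(p)\}}^{L^2}$. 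You instead run a Hahn--Banach separation argument combined with the symmetry of the covariance operator $R_\mu$, which shows directly that any $\ell\in(E^r)^*$ annihilating all $h^j_p$ annihilates $R_\mu\big((E^r)^*\big)$ and hence the support; this bypasses the explicit description of the dual of $E^r$, at the price of needing the abstract facts that $R_\mu$ maps $(E^r)^*$ into $E^r$ and that $\mathcal{H}_X$ is the Cameron--Martin closure of $R_\mu\big((E^r)^*\big)$. The technical point you flag --- that $R_\mu\ell$ is genuinely a $\mathcal{C}^r$ function --- is exactly what the paper's Proposition \ref{propo:injection} supplies (via the $\mathcal{C}^r$ regularity of $\gamma_X\colon M\to L^2$ from Proposition \ref{Ltwocont}), so there is no gap, only a step whose justification you correctly deferred.
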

In particular, note that the support of a GRF is always a vector space, thus any neighborhood of $0$ has positive probability. 

Theorem \ref{thm:dtgrf:3} is just a general property of Gaussian measures, translated into the language of the present paper. In Section \ref{sec:PT3}, we prove it as a consequence of \cite[Theorem 3.6.1]{bogachev} together with a description of the Cameron-Martin space of $X$. 
In Appendix \ref{app:rgrf}, we present a direct proof of such result, adapted to our language (see Corollary \ref{cor:sputoCMapp}). We do this by generalizing the proof given in \cite[Section A.3-A.6]{NazarovSodin2} for the case in which $r=0$ and $k=1$. 

\subsection{Differential topology from the random point of view} Addressing question \ref{itm:Q2} above, let us observe that the probabilities in \eqref{eq:limitprob} are equal if and only if $\PP(X\in\partial A)=0$, and the study of this condition naturally leads us to the world of Differential Topology.

 When studying smooth maps, most relevant sets are given imposing some conditions on their jets (this is what happens, for instance, when studying a given singularity class). For example, let us take for $A\subset E^\infty$ in Theorem \ref{thm:2} a set defined by a condition on the $r$-th jet of $X$:
\be A=\{\textrm{$f\in E^\infty$ such that $j_x^r f\in V\subseteq J^{r}(M, \R^k)$ for all $x\in M$}\}.\ee
One can show that if $V$ is an open set with smooth boundary $\de V$, then there is no map $f\in \de A$ satisfying $j^rf \pitchfork \de V$. This is a frequent situation, indeed in most cases, the boundary of $A$ consists of functions whoose jet \emph{is not} transverse to a given submanifold $W\subset J^r(M, \R^k)$, and then the problem of proving the existence of the limit \eqref{eq:limitprob2} reduces to show that $\P(j^r X\pitchfork W)=1$.  Motivated by this, we prove the following.
\begin{thm}\label{transthm2}
Let $X\in \g\infty Mk$ and denote $F=\spt(X)$. Let $r\in \N$. Assume that for every $p\in M$ we have
\be \label{eq:trans}
\mathrm{supp}(j^r_pX)=J^r_p(M,\R^k)\ee
Then for any submanifold $W\subset J^r(M,\R^k)$, we have $\P(j^rX \pitchfork W)=1$.
\end{thm}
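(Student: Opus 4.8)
The plan is to deduce the statement from the classical, finite‑dimensional \emph{parametric} transversality theorem, by isolating inside $X$ a finite‑dimensional Gaussian summand whose $r$‑jets are abundant enough to make the jet evaluation a submersion. First I would localise. Transversality of $j^rX$ to $W$ is a local condition on $M$ and — $W$ being a submanifold, hence locally closed and second countable, of $J^r(M,\R^k)$ — also a local condition along $W$, so a standard countable‑cover argument reduces the claim to proving, with probability $1$, a countable family of statements of the following shape: for a relatively compact open $V\subset M$ sitting in a coordinate chart, identify $J^r(V,\R^k)\cong V\times\mathcal J$ with $\mathcal J\cong\R^{k\binom{m+r}{m}}$ the fixed space of $r$‑jet coordinates in the chart (so $j^r_qg=(q,\mathfrak j(g)(q))$ with $\mathfrak j(g)\colon V\to\mathcal J$), and for a fixed closed submanifold $W$ of an open subset of $V\times\mathcal J$; then $j^rX|_V\pitchfork W$ almost surely. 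Each such event is Borel in $E^\infty$: it is the intersection, over a compact exhaustion of $V$, of the conditions ``$j^rX\pitchfork W$ over a compact set'', each of which is open in $E^\infty$.

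Next I would extract a finite Gaussian model over $\overline V$. By Theorem \ref{thm:dtgrf:3}, the support of the finite‑dimensional Gaussian vector $j^r_pX$ is the linear span of $\{j^r_ph^j_q: q\in M,\ j=1,\dots,k\}$, so \eqref{eq:trans} says exactly that this span is all of $J^r_p(M,\R^k)$ for every $p$. Finite‑dimensionality lets me pick finitely many of the $h^j_q$ whose $r$‑jets at $p$ span $J^r_p(M,\R^k)$; surjectivity of a linear map being an open condition and $q'\mapsto j^r_{q'}g$ continuous, the same finite set spans $J^r_{q'}(M,\R^k)$ for $q'$ near $p$. Covering the compact $\overline V$ by finitely many such neighbourhoods and collecting the functions, I obtain $g_1,\dots,g_L$ among the $h^j_q$ — in particular in the Cameron–Martin space $H$ of $X$ — with $\{j^r_qg_1,\dots,j^r_qg_L\}$ spanning $J^r_q(M,\R^k)$ for all $q\in\overline V$; orthonormalising in $H$ (which preserves their span, hence this property) I may assume the $g_i$ are orthonormal in $H$. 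Let $\eta_i$ be the Gaussian variables dual to $g_i$ under the Cameron–Martin isometry, so $\E\{\eta_i\eta_j\}=\delta_{ij}$ and $g_i(q)=\E\{\eta_iX(q)\}$; then $\eta=(\eta_1,\dots,\eta_L)$ is a standard Gaussian vector in $\R^L$, and $X'\coloneqq X-\sum_{i=1}^L\eta_ig_i$ satisfies $\E\{\eta_iX'(q)\}=0$ for all $i,q$, so, being jointly Gaussian with $\eta$, it is a GRF \emph{independent} of $\eta$; moreover $X'\in E^\infty$ almost surely, since $X\in E^\infty$ a.s.\ and $g_i\in H\subset E^\infty$.

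Finally I would run submersion $+$ Sard $+$ Fubini. Conditioning on $X'=f\in E^\infty$, consider
\[
\Psi_f\colon V\times\R^L\to V\times\mathcal J,\qquad
\Psi_f(q,t)=\Bigl(q,\ \mathfrak j(f)(q)+\textstyle\sum_{i=1}^L t_i\,\mathfrak j(g_i)(q)\Bigr)=j^r_q\Bigl(f+\textstyle\sum_it_ig_i\Bigr).
\]
Its differential at $(q,t)$ sends $(\dot q,\dot t)$ to $\bigl(\dot q,\ \ast+\sum_i\dot t_i\,\mathfrak j(g_i)(q)\bigr)$, which — since $\{\mathfrak j(g_i)(q)\}$ spans $\mathcal J$ — is onto $T_qV\times\mathcal J$; hence $\Psi_f$ is a submersion, in particular $\Psi_f\pitchfork W$. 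The classical parametric transversality theorem (Sard's theorem applied to the projection $\Psi_f^{-1}(W)\to\R^L$, all maps being $\mathcal C^\infty$) then makes $T_f\coloneqq\{t\in\R^L: j^r(f+\sum_it_ig_i)|_V\pitchfork W\}$ of full Lebesgue measure, for every $f$. The set $\{(f,t): j^r(f+\sum_it_ig_i)|_V\pitchfork W\}$ is Borel in $E^\infty\times\R^L$ (preimage of the Borel set from the first step under the continuous map $(f,t)\mapsto f+\sum_it_ig_i$), so $T_f$ is measurable with the needed joint measurability; since the law of $\eta$ is absolutely continuous with respect to Lebesgue measure and $\eta$ is independent of $X'$, Fubini gives
\[
\P\bigl(j^rX|_V\pitchfork W\bigr)=\P\bigl(\eta\in T_{X'}\bigr)=\int_{E^\infty}\P(\eta\in T_f)\,d[X'](f)=1 ,
\]
and intersecting over the countable family of pairs $(V,W)$ from the localisation step yields $\P(j^rX\pitchfork W)=1$.

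I expect the main obstacle to be the middle step: turning the pointwise support hypothesis \eqref{eq:trans} into a single finite‑dimensional Gaussian summand $\sum_i\eta_ig_i$ of $X$ that is independent of the remainder and whose $r$‑jets span $J^r_q(M,\R^k)$ uniformly over a compact set. This requires the identification of $\spt(j^r_pX)$ with the span of the jets of the $h^j_q$, a compactness argument over $\overline V$, and the Gaussian conditioning/regression decomposition; by contrast, the differential‑topological core (submersion $\Rightarrow$ transverse, then Sard) and the Borel‑measurability bookkeeping for Fubini are routine.
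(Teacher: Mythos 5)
Your proof is correct, but it follows a genuinely different route from the paper's. The paper deduces Theorem \ref{transthm2} from the general probabilistic parametric transversality Theorem \ref{thm:transthm}: there one localizes in \emph{both} the parameter space $P$ and the function space $F=\spt(X)$, chooses point-dependent finite-dimensional slice directions $h\in(\mathcal{H}_X)^a$ (using density of the Cameron--Martin space in the support), applies the finite-dimensional parametric transversality theorem and Fubini along the slice $\tilde{x}+hu$, and then --- this is the key Gaussian input --- invokes the Cameron--Martin theorem to transfer the resulting null set from the shifted measure $[X+hu]$ back to $[X]$ by quasi-invariance. You instead exploit the stronger hypothesis \eqref{eq:trans} to produce, over each compact coordinate patch, a \emph{single} finite-dimensional Gaussian summand $\sum_i\eta_ig_i$ with $g_i\in\mathcal{H}_X$ whose $r$-jets span every fibre; Gaussian regression then makes the remainder $X'$ independent of $\eta$, the jet evaluation of the finite-dimensional model is a submersion for \emph{every} value of $X'$, and Sard plus Fubini (using absolute continuity of the law of $\eta$ with respect to Lebesgue measure) close the argument with no appeal to the Cameron--Martin theorem and no localization in the function space. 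What each approach buys: the paper's proof yields the far more general Theorem \ref{thm:transthm} (arbitrary smooth $\Phi$ transverse to $W$ on $P\times\spt(X)$, jet supports possibly proper subspaces), whereas yours is more elementary and makes transparent that the full-support hypothesis is exactly a uniform submersivity statement. Two small points to tighten: the identification of $\spt(j^r_pX)$ with $\mathrm{span}\{j^r_ph^j_q\}$ is not literally Theorem \ref{thm:dtgrf:3} (which describes $\spt(X)\subset E^r$) but its finite-dimensional shadow $\spt(\xi)=\mathrm{Im}\,K$ combined with $\spt(j^r_pX)=\overline{j^r_p(\spt X)}$; and since $W$ is only locally closed in $J^r(M,\R^k)$, the openness of ``transverse over a compact set'' requires exhausting $W$ by compact pieces as well as exhausting $V$ (exactly the reduction the paper performs at the start of its proof of Theorem \ref{transthm}), otherwise the event need not be open --- though it remains Borel, which is all your Fubini step needs.
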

Let us explain condition \eqref{eq:trans} better. Given $X\in \mathcal{G}^r(M, \R^k)$ and $p\in M$ one can consider the random vector $j_p^rX\in J_p^r(M, \R^k)$: this is a Gaussian variable and \eqref{eq:trans} is the condition that the support of this Gaussian variable is the whole $J_p^r(M, \R^k)$.
For example, if the support of a $\mathcal{C}^r$-Gaussian field $X$ equals the whole $E^r$, then for every $W\subset J^{r}(M, \R^k)$ we have $X\pitchfork W$ with probability one. 

We will actually prove Theorem \ref{transthm2} as a corollary of the following more general theorem, that is an infinite dimensional version of the Parametric Transversality Theorem \ref{parametric transversality}.
\begin{thm}[Probabilistic transversality]\label{thm:transthm}
Let $X\in \g r Mk$, for $r\in\N\cup \{\infty\}$, and denote $F=\spt(X)$. Let $P,N$ be smooth manifolds and $W\subset N$ a submanifold. Assume that $\Phi\colon P\times F\to N$ is a smooth map  such that $\Phi\transv W$. Then
\be 
\P\{\phi(X)\transv W\}=1,
\ee
where $\phi(f)$ is the map $p\mapsto\Phi(p,f)$.
\end{thm}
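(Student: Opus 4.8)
The plan is to reduce the infinite-dimensional statement to the classical parametric transversality theorem by exploiting the structure of the support $F = \spt(X)$ as a separable Banach (or Fr\'echet) space. First I would recall from Theorem~\ref{thm:dtgrf:3} that $F$ is a closed linear subspace of $E^r$, spanned by the functions $h_p^j$; in particular $X$ takes values almost surely in $F$, so it suffices to work with the restricted field $X\colon \Omega \to F$. The key structural input is the Karhunen–Lo\`eve expansion $X = \sum_{n} \xi_n f_n$ (Theorem~\ref{thm:rep}), where the $f_n$ lie in $F$ and span a dense subspace. This gives, for each truncation, a finite-dimensional subspace $F_N = \mathrm{span}\{f_0,\dots,f_N\}\subset F$.

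The heart of the argument is then a finite-dimensional approximation. For fixed $N$, consider the smooth map $\Psi_N\colon P\times \R^{N+1}\to N$ given by $\Psi_N(p, a_0,\dots,a_N) = \Phi(p, \sum a_n f_n)$. Because the inclusion $\R^{N+1}\hookrightarrow F$, $a\mapsto \sum a_n f_n$, is linear and smooth, and $\Phi\transv W$ on all of $P\times F$, one needs to check that $\Psi_N\transv W$ as well — this is \emph{not automatic}, since restricting to a subspace can destroy transversality, and this is where the hypothesis that $F$ is exactly the support (hence the $f_n$ span a set rich enough to surject onto the relevant normal directions) must be used. Concretely, at a point $(p,f)$ with $\Phi(p,f)\in W$, the image of $d\Phi_{(p,f)}$ together with $T_{\Phi(p,f)}W$ spans $T_{\Phi(p,f)}N$; the partial differential in the $F$-direction, $v\mapsto d\Phi_{(p,f)}(0,v)$, is a continuous linear map $F\to T_{\Phi(p,f)}N/(\ldots)$ which is surjective onto the needed finite-dimensional quotient, so by density of $\bigcup_N F_N$ in $F$ it is already surjective when restricted to $F_N$ for $N$ large (depending a priori on the point, so a local/compactness argument over $P\times F$, or over $P$ for each bounded region of $F$, is needed). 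Once $\Psi_N\transv W$, the classical Parametric Transversality Theorem~\ref{parametric transversality} applies: for Lebesgue-almost-every $a\in\R^{N+1}$, the map $p\mapsto \Psi_N(p,a)$ is transverse to $W$.

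Next I would transfer this ``almost every $a$'' conclusion into a probabilistic statement. Fix a large finite $N$ and split $X = X_N + R_N$ where $X_N = \sum_{n\le N}\xi_n f_n$ and $R_N = \sum_{n>N}\xi_n f_n$. Conditioning on the tail randomness generating $R_N$, the coefficients $(\xi_0,\dots,\xi_N)$ remain a non-degenerate Gaussian vector on $\R^{N+1}$, absolutely continuous with respect to Lebesgue measure; applying the preceding paragraph to the shifted map $(p,a)\mapsto \Phi(p, \sum_{n\le N} a_n f_n + R_N(\omega))$ — which is still transverse to $W$ for the same reason — gives that $\phi(X)\transv W$ almost surely on this conditional event, and then integrating over the tail yields $\P\{\phi(X)\transv W\}=1$. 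A mild point to be careful about: transversality is a condition ``for all $p\in P$'', and $P$ need not be compact, so I would either invoke the standard fact that the parametric transversality theorem holds for arbitrary (second-countable) $P$, or exhaust $P$ by compacts and use countable additivity.

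The main obstacle I anticipate is precisely the verification that finite truncations $\Psi_N$ are transverse to $W$, i.e. that the support hypothesis genuinely propagates to surjectivity of the finite-dimensional partial differential. One must argue uniformly enough — over $p$ in compact sets and over the relevant range of $R_N$ — to pick a single $N$ that works, which is where a semicontinuity/compactness argument and the fact that surjectivity is an open condition come in; care is also needed because the codimension of $W$ could in principle be infinite, but the hypothesis $\Phi\transv W$ (together with $N$ being finite-dimensional, so $W$ has finite codimension) keeps everything finite-dimensional at the level that matters. A secondary technical nuisance is measurability of the event $\{\phi(X)\transv W\}$, which should follow from writing transversality as a countable intersection over a dense set of parameters of open conditions on jets, using separability of $F$ and continuity of $\Phi$.
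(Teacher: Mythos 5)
Your overall strategy---reduce to finitely many directions drawn from a dense subspace of $F=\spt(X)$ and invoke the finite-dimensional Parametric Transversality Theorem---is the same as the paper's, and your substitute for the paper's key probabilistic step is genuinely different: you condition on the tail of the Karhunen--Lo\`eve expansion and use that $(\xi_0,\dots,\xi_N)$ is absolutely continuous with respect to Lebesgue measure, whereas the paper shifts $X$ by a deterministic Cameron--Martin vector $hu$, applies Fubini over $u$ in a small ball, and then uses the Cameron--Martin quasi-invariance theorem to transfer the null set from $[X+hu]$ back to $[X]$. Your route would avoid the Cameron--Martin theorem altogether, which is an attractive feature. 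However, as written it has a genuine gap, and it is exactly the one you flag as ``the main obstacle'': the claim that a single $N$ can be chosen so that the truncated map $(p,a)\mapsto\Phi(p,\sum_{n\le N}a_nf_n+R_N(\omega))$ is transverse to $W$ on the whole of $P\times\R^{N+1}$ (for a.e.\ tail $\rho=R_N(\omega)$). As $a$ ranges over $\R^{N+1}$ and $\rho$ over the support of the tail, the points $\sum a_nf_n+\rho$ fill out a dense subset of $F$; so you would need one finite-dimensional subspace $F_N$ that restores transversality at \emph{every} point of $(P\times F)\cap\Phi^{-1}(W)$. The subspace produced by the hypothesis $\Phi\transv W$ depends on the point, surjectivity onto the relevant quotient of $T_\theta N$ is only an open condition, and the compactness/semicontinuity argument you propose to make the choice uniform is unavailable: $F$ is an infinite-dimensional Fr\'echet space, so bounded regions of $F$ are not compact and no exhaustion of $F$ by compacta covers the support of $R_N$.

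The fix --- and this is what the paper does --- is to localize in the $F$-variable as well as in $P$: for each $(p,x)\in P\times F$ one finds a compact neighborhood $Q$ of $p$, a neighborhood $V_x$ of $x$, a finite tuple $h\in(\mathcal H_X)^a$ and $\e>0$ such that $\Phi|_{P\times(\tilde x+\mathrm{span}\{h\})}\transv_{Q\times D_\e}W$ for all $\tilde x\in V_x$ (using that transversality on a compact set against a closed $W$ is an open condition, and that $\mathcal H_X$ is dense in $F$), proves the conditional statement $\P\{\phi(X)\transv_{Q}W\mid X\in U_x\}=1$ on each such piece, and concludes by countable subadditivity over a countable cover of $P\times F$ (second countability of $E^r$). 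Your conditioning argument can be repaired along these lines --- apply the Parametric Transversality Theorem with parameter space the open set $\{a\in\R^{N+1}:\sum a_nf_n+\rho\in V_x\}$ rather than all of $\R^{N+1}$, and intersect the bad event with $\{X\in V_x\}$ before integrating over $\rho$ --- but that localization step is the essential missing idea, not a routine technicality; without it the appeal to Theorem~\ref{parametric transversality} is not justified. (Two smaller omissions: the reduction to $W$ closed, or to a countable union of compact pieces of $W$, which is needed for the openness of the transversality condition; and the case $\Phi(p,x)\notin W$, which is where closedness of $W$ enters again.)
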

\begin{remark}
We stress the fact that the space $F$ in Theorem \ref{thm:transthm} might be infinite dimensional. This is remarkable in view of the fact that the proof of the finite dimensional analogue of Theorem \ref{thm:transthm} makes use of Sard's theorem, which is essentially a finite dimensional tool. In fact, such result is false in general for smooth maps defined on an infinite dimensional space (see \cite{counterSardKupka}). In this context, the only alternative tool is the Sard-Smale theorem (see \cite{smalesard}), which says that the set of critical values of a smooth Fredholm map between Banach spaces is meagre (it is contained in a countable union of closed sets with empty interior). However, this is not enough to say something about the evaluation of a Gaussian measure on such set, not even when the dimension is finite.

Moreover, in both the proof of the finite dimensional transversality theorem and of Sard-Smale theorem an essential instrument is the Implicit Function theorem. Although this result, in its generalized version developed by Nash and Moser, is still at our disposal in the setting of Theorem \ref{thm:transthm} (at least when $M$ is compact), it fails to hold in the context of more general spaces.

 That said, the proof of theorem \ref{thm:transthm} relies on finite dimensional arguments and on the Cameron-Martin theorem (see \cite[Theorem 2.4.5]{bogachev}), a result that is specific to Gaussian measures on locally convex spaces. In fact, the careful reader can observe that the only property of $X$ that we use is that $[X]$ is a nondegenerate Radon Gaussian measure (in the sense of \cite[Def. 3.6.2]{bogachev}) on the second-countable, locally convex vector space $F$.
\end{remark}

\section{Preliminaries}\label{sec:preliminaries}
\subsection{Space of smooth functions}\label{sec:spasmo}
Let $M$ be a smooth manifold of dimension $m$. We will always implicitely assume that $M$ is Hausdorff and second countable, possibly with boundary. Let $k\in \N$ and $r\in\N\cup\{+\infty\}$. We will refer at the set of $\mathcal{C}^r$ functions 
\be E^r=\Cr{r}{M}{k}\ee as a topological space with the weak Whitney topology as in ~\cite{Hirsch,NazarovSodin2}. Let $Q\colon D\hookrightarrow M$ be an embedding of a compact set $D\subset \R^n$, we define for any $f\in \Cr rMk$, the seminorm
\be\label{eq:seminormQR}
\|f\|_{Q,r}\doteq \sup\Big\{\big|\de_\a \big(f\circ Q\big) (x)\big| \colon \a\in \N^{m},\ |\a|\le r,\ x\in \text{int}(D)\Big\}.
\ee
Then, for $r\in \N$ finite, the weak topology on $\Cr rMk$ is defined by the family of seminorms $\{\nrm \cdot Qr\}_Q$, while the topology on $\Cr\infty Mk$ is defined by the whole family $\{\nrm \cdot Qr\}_{Q,r}$. 
We recall that for any $r\in\N\cup\{\infty\}$, the topological space $\Cr rMk$ is a Polish space: it is separable and homeomorphic to a complete metric space (indeed it is a Fréchet space). 
We will also need to consider the space $\Cr {r,r}{M\times M}k$ consisting of those functions that have continuous partial derivatives of order at least $r$ with respect to both the product variables. The topology on this space is defined by the seminorms
\[
\|f\|_{Q,(r,r)}\doteq \sup\Big\{\big|\de_{(\a,\beta)} \big(f\circ Q\big) (x,y)\big| \colon \a,\beta \in \N^{m},\ |\a|, |\beta|\le r,\ x,y\in \text{int}(D)\Big\},
\]
where now $Q$ varies among all product embeddings: $Q(x,y)=(Q_1(x), Q_2(y))\in M\times M$ and $Q_1, Q_2$ are embeddings of two compact sets $D_1, D_2$.
\begin{lemma}\label{Lemma:14}
Let $f,f_n\in \Cr rMk$. $f_n\to f$ in $\Cr rMk$ if and only if for any convergent sequence $p_n\to p$ in $M$, 
\be
j^r_{p_n}f_n\to j^r_{p}f \quad \text{in $J^r(M,\R^k)$}.
\ee
\end{lemma}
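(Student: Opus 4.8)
The plan is to prove Lemma \ref{Lemma:14} by reducing the statement to local coordinates, where both the weak Whitney topology and jet convergence become concrete statements about uniform convergence of derivatives on compact sets. The key point is that the weak Whitney topology is generated by the seminorms $\|\cdot\|_{Q,r}$ indexed by embeddings $Q\colon D\hookrightarrow M$ of compact sets $D\subset\R^n$, and that a convergent sequence $p_n\to p$ eventually lives inside a single coordinate chart around $p$; so all the analysis can be done in $\R^m$.

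First I would prove the forward implication. Assume $f_n\to f$ in $E^r$ and take $p_n\to p$ in $M$. Choose a chart $(U,\psi)$ around $p$ and a compact set $D\subset\psi(U)$ whose interior contains $\psi(p)$; since $p_n\to p$, for $n$ large we have $p_n\in U$ and $\psi(p_n)\in\mathrm{int}(D)$. Taking $Q=\psi^{-1}|_D$, the hypothesis $\|f_n-f\|_{Q,r}\to 0$ says exactly that all partial derivatives up to order $r$ of $f_n\circ Q$ converge uniformly on $\mathrm{int}(D)$ to those of $f\circ Q$. Combined with $\psi(p_n)\to\psi(p)$ and the (local) continuity of the derivatives of $f\circ Q$, this gives $\de_\a(f_n\circ Q)(\psi(p_n))\to\de_\a(f\circ Q)(\psi(p))$ for every $|\a|\le r$, which is precisely the statement that $j^r_{p_n}f_n\to j^r_p f$ in $J^r(M,\R^k)$ (using the standard chart-induced coordinates on the jet bundle). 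For $r=\infty$ one runs the same argument for each finite order.

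For the converse, I would argue by contrapositive: suppose $f_n\not\to f$ in $E^r$. Then there is a seminorm $\|\cdot\|_{Q,s}$ (with $s\le r$, and $s=r$ if $r$ is finite) and an $\e>0$ such that $\|f_{n_j}-f\|_{Q,s}\ge\e$ along a subsequence. Unpacking the definition of the seminorm, for each $j$ there is a point $x_j\in\mathrm{int}(D)$ and a multi-index $|\a|\le s$ with $|\de_\a(f_{n_j}\circ Q)(x_j)-\de_\a(f\circ Q)(x_j)|\ge\e/2$ (say). Since $D$ is compact, pass to a further subsequence so that $x_j\to x_\infty\in D$; set $p_j=Q(x_j)\to p_\infty=Q(x_\infty)$ in $M$. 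Then $j^r_{p_j}f_{n_j}$ does not converge to $j^r_{p_\infty}f$, because the component corresponding to $\de_\a$ stays $\e/2$-away (here one uses that $\de_\a(f\circ Q)$ is continuous, so $\de_\a(f\circ Q)(x_j)\to\de_\a(f\circ Q)(x_\infty)$, and the $\a$-component of $j^r_{p_\infty}f$ is this limit value). This contradicts the hypothesis applied to the sequence $p_j\to p_\infty$ (after interleaving with $p$ to form a genuine sequence indexed by all of $\N$, or simply noting the hypothesis must hold along every such subsequence).

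The main obstacle, and the only genuinely delicate point, is bookkeeping the chart-dependence: the jet bundle $J^r(M,\R^k)$ has its own topology, and one must check that "convergence of all coordinate derivatives at points $\psi(p_n)\to\psi(p)$ in one chart" is equivalent to convergence in $J^r(M,\R^k)$, i.e.\ that the transition maps for the jet bundle are continuous and that the choice of chart does not matter. This is standard (the jet bundle is built precisely so that these transitions are smooth), but it is worth stating cleanly. A secondary technical care is the $r=\infty$ case, where "$f_n\to f$" means convergence in every seminorm $\|\cdot\|_{Q,r}$ for every finite $r$, and "$j^r_{p_n}f_n\to j^r_pf$ in $J^r$" must be read as holding for all finite $r$; both implications then follow by applying the finite-$r$ argument for each $r$ separately.
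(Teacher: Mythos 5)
Your proof is correct and is the standard local-coordinates argument; the paper itself gives no proof of Lemma \ref{Lemma:14}, deferring entirely to Hirsch, and your argument is essentially the one found there. The two delicate points you flag (chart-independence of jet convergence, and the fact that in the contrapositive the limit point $x_\infty$ may lie on $\partial D$, so one should use that $Q$ extends to an embedding of a neighborhood of $D$ to get a genuine chart around $p_\infty$) are real but routine, and you handle the subsequence-versus-sequence issue correctly by interleaving.
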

\begin{proof}See \cite[Chapter 2, Section 4]{Hirsch}.
\end{proof}
Given an open cover $\{U_\ell\}_{\ell\in L}$ of $M$, the restriction maps define a topological embedding $
\Cr rMk \hookrightarrow \prod_{\ell\in L} \Cr r{U_\ell}k,
$
indeed any converging sequence $p_n\to p$ belongs to some $U_\ell$ eventually. In particular, suppose that $Q_\ell\colon\D^m\hookrightarrow M$ are a countable family of embeddings of the unit $m-$disk $\D^m$ such that int$(Q_\ell(\D^m))=U_\ell$ is a covering of $M$\footnote{This is always possible in a smooth manifold without boundary, by definition, and it is still true if the manifold has boundary: if $p\in \de M$, take an embedding of the unit  disk $Q\colon\D^m  \hookrightarrow M$ such that $Q(\de \D^m)$ intersects $\de M$ in an open neighborhood of $p$, then the interior of $Q(\D^m)$, viewed as a subset of $M$, contains $p$.}. Then the maps $Q_\ell^*\colon f\mapsto f\circ Q_\ell$ define a topological embedding
\be\label{prodembballeq}
\{Q_\ell^*\}_\ell \colon \Cr rMk \hookrightarrow \left(\Cr r{\D^m}k\right)^L
\ee

We refer to the book \cite{Hirsch} for the details about topologies on spaces of differentiable functions.
\subsection{Gaussian random fields}
Most of the material in this section, can be found in the book \cite{AdlerTaylor} and in the paper \cite{NazarovSodin2}; we develop the language in a slightly different way so that it suits our point of view focused on measure theory.

A real random variable $\gamma$ on a probability space $\Prob$ is said to be Gaussian if there are real numbers $\mu\in \R$ and $\sigma\geq 0$, such that $\gamma\sim N(\mu,\sigma^2)$, meaning that it induces the $N(\mu,\sigma^2)$ measure on the real numbers, which is $\delta_{\mu}$ if $\sigma=0$, and for $\sigma>0$ it has density 
\[
\rho(t)=\frac1{\sqrt{2\pi\sigma^2}}e^{-\frac{(t-\mu)^2}{2\sigma^2}}.
\]
In this paper, unless otherwise specified, all Gaussian variables and vectors are meant to be \emph{centered}, namely with $\mu=0$.

A (centered) Gaussian random vector $\xi$ in $\R^k$ is a random variable on $\R^k$ s.t. for any covector $\lambda\in(\R^k)^*$, the real random variable $\lambda \xi$ is (centered) Gaussian. In this case we write $\xi\sim N(0,K)$ where $K=\E\{\xi \xi^T\}$ is the so called \emph{covariance matrix}.
If $\xi$ is a Gaussian random vector in $\R^k$, there is a random vector $\gamma\sim N(0,\mathbbm{1}_j)$ in $\R^j$ and an injective $k\times j$ matrix $A$ s.t.
\[
\xi=A\gamma.
\]
In this case $K=AA^T$ and the support of $\xi$ is the image of $A$, which concides with the image of the matrix $K$, that is
\[
\textrm{supp}(\xi)=\{p\in\R^k \colon \P\{U_p\}>0 \text{  for all neighborhoods $U_p\ni p$ }\}=\text{Im}K\footnote{This is the finite dimensional version of Theorem \ref{thm:dtgrf:3}.},
\]
indeed $\xi\in \text{Im}K=\text{Im}A$ with $\P=1$. If $A$ is invertible, $\xi$ is said to be nondegenerate, this happens if and only if $\det K\neq 0$, if and only if $\textrm{supp}(\xi)=\R^n$, if and only if the probability induced by $\xi$ admits a density, which is given by the formula
\be\label{gaussianlaweq}
\P\{\xi\in U\}=\frac1{(2\pi)^\frac{n}{2}\det K^\frac12}\int_{U}e^{-\frac12 W^TK^{-1}W}dW^n.
\ee

\begin{defi}[Gaussian random field]\label{def:GRF}
Let $M$ be a smooth manifold.
Let $\Prob$ be a probability space. An $\R^k$-valued \emph{random field (RF)} on $M$ is a measurable map
\[
X:\Omega\to {(\R^{k})}^M,
\]
with respect to the product $\sigma-$algebra on the codomain.
An $\R$-valued \emph{RF} is called a \emph{random function}.

Let $r\in\N\cup \{\infty\}$. We say that $X$ is a $\mathcal{C}^r$ random field, if $X_{\w} \in \Cr{r}{M}{k}$ for $\P$-almost every $\w\in \Omega$. 
We say that $X$ is a \emph{Gaussian random field (GRF)}, or just \emph{Gaussian field}, if for any finite collection of points $p_1,\dots, p_j\in M$, the random vector in $\R^{jk}$ defined by $(X(p_1),\dots,X(p_j))$ is Gaussian.
We denote by $\mathcal{G}^r(M, \R^k)$ the set of $\mathcal{C}^r$ Gaussian fields.
\end{defi}
When dealing with random fields $X:\Omega\to (\R^k)^M$, we will most often use the shortened notation of omitting the dependence from the variable $\w$. In this way $X:M\to\R^k$ is a  \emph{random} map, i.e. a random element\footnote{We recall that, given a measurable space $(S,\mathfrak{A})$, a measurable map from a probability space $\Prob$ to $S$ is also called a \emph{Random Element} of $S$ (see \cite{Billingsley}). Random variables and random vectors are random elements of $\R$ and $\R^k$, respectively.} of $(\R^k)^M$.

\begin{remark}
In the above definition, the sentence:
\be \textrm{``$X_{\w} \in \Cr{r}{M}{k}$ for $\P$-almost every $\w\in \Omega$''}\ee 
means that the set $\{\w\in\Omega\colon X_\w\in \Cr{r}{M}{k}\}$ contains a \emph{measurable} set $\Omega_0$ which has probability one. We make this remark because the subset $\Cr rMk$ doesn't belong to the product $\sigma-$algebra of $(\R^{k})^M$. 
\end{remark}

\begin{lemma}\label{borelemm1}
For all $r \in \N\cup\{+\infty\}$ the Borel $\sigma$-algebra $\mathcal{B}\Big(\Cr rMk\Big)$ is generated by the sets 
\[
\{f\in\Cr rMk \ :\ f(p)\in A\}
\]
with $p\in M$ and $A\subset \R^k$ open. 
Moreover $\Cr rMk$ is a Borel subset of $\Cr 0Mk$, for all $r \in \N\cup\{+\infty\}$.\end{lemma}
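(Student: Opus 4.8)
The plan is to prove the two assertions separately, first the description of the Borel $\sigma$-algebra and then the measurability statement, the latter being a consequence of the former together with the embedding \eqref{prodembballeq}.

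\textbf{Step 1: the generating family.} Let $\mathcal{A}$ denote the $\sigma$-algebra on $\Cr rMk$ generated by the cylinder sets $\{f\colon f(p)\in A\}$ with $p\in M$ and $A\subseteq\R^k$ open. Since each evaluation map $\mathrm{ev}_p\colon\Cr rMk\to\R^k$, $f\mapsto f(p)$, is continuous, hence Borel, every such cylinder set is Borel, so $\mathcal{A}\subseteq\mathcal{B}(\Cr rMk)$. For the reverse inclusion I would use that $\Cr rMk$ is a separable metric space (recalled in Section \ref{sec:spasmo}): its Borel $\sigma$-algebra is generated by any countable base of open sets, equivalently by the open balls of the metric, equivalently it suffices to show that each seminorm ball is $\mathcal{A}$-measurable and that countably many of them generate the topology. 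By \eqref{prodembballeq} the topology is induced by the countable family of seminorms $f\mapsto\|f\circ Q_\ell\|_{r}$ (for $r=\infty$, the countable family $f\mapsto\|f\circ Q_\ell\|_{Q_\ell,r'}$, $\ell\in L$, $r'\in\N$), so it is enough to show each such seminorm is $\mathcal{A}$-measurable as a function $\Cr rMk\to\R$. Now $\de_\a(f\circ Q_\ell)(x)$ is, for fixed $\a$ and $x\in\mathrm{int}(\D^m)$, a limit of finite-difference quotients of the values $f(Q_\ell(x'))$ at points $x'$ near $x$; each such quotient is $\mathcal{A}$-measurable because it is a finite linear combination of evaluation maps, and a pointwise limit of $\mathcal{A}$-measurable functions is $\mathcal{A}$-measurable. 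Finally, the supremum over $x\in\mathrm{int}(\D^m)$ and $|\a|\le r$ defining the seminorm can be replaced by a supremum over a countable dense set of $x$'s (by continuity of the derivatives), hence is again $\mathcal{A}$-measurable. Therefore every metric ball lies in $\mathcal{A}$, giving $\mathcal{B}(\Cr rMk)\subseteq\mathcal{A}$.

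\textbf{Step 2: $\Cr rMk$ is Borel in $\Cr 0Mk$.} Here I would again invoke \eqref{prodembballeq}, or rather its $\mathcal{C}^0$ analogue: it suffices to treat the model case $M=\D^m$, $k=1$, since the property of being $\mathcal{C}^r$ is local and the embedding is compatible with the one for $\Cr 0{}{}$. Being $\mathcal{C}^r$ on $\mathrm{int}(\D^m)$ can be characterized by a countable system of conditions on the values of $f$: namely, for each multi-index $\a$ with $|\a|\le r$, the finite-difference quotients approximating $\de_\a f$ form a Cauchy net uniformly on compact subsets, a condition that can be written using only countably many rational points and rational parameters, i.e.\ as a countable intersection/union of sets of the form $\{f\colon |(\text{finite linear combination of }f(q_i))|<\varepsilon\}$. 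Each of these is open in $\Cr 0{}{}$ (the relevant linear combination of evaluations is continuous), hence the whole set $\Cr r{}{}$ is a Borel — in fact $F_{\sigma\delta}$-type — subset of $\Cr 0{}{}$. Reassembling via the product embedding gives the claim for general $M$ and $k$.

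\textbf{Main obstacle.} The routine part is checking measurability of linear combinations of evaluations; the one point demanding care is the passage from the \emph{uncountable} defining data (all $p\in M$, all $x\in\mathrm{int}(\D^m)$, all multi-indices, the supremum in the seminorm) to a \emph{countable} subfamily, so that one stays inside a $\sigma$-algebra. This is where separability of $\Cr rMk$ and the continuity of $f$ and of its derivatives are essential: they let us restrict every supremum and every limit to a fixed countable dense set of parameters without changing the value, and thereby express each seminorm — and the $\mathcal{C}^r$ condition itself — through countably many evaluation functionals. Once this reduction is in place, both statements follow from standard facts about Borel $\sigma$-algebras of separable metric spaces.
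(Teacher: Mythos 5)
Your plan is correct, and it is essentially the argument that the paper delegates to its references (the proof of this lemma in the text is just a citation of \cite{NazarovSodin2} and \cite{bogachev}): one shows the cylinder $\sigma$-algebra contains the Borel one by writing each defining seminorm as a supremum over a countable dense set of parameters of limits of difference quotients, i.e.\ of finite linear combinations of evaluations, and one shows $\Cr rMk$ is Borel in $\Cr 0Mk$ by expressing the $\mathcal{C}^r$ condition through countably many such combinations. The one substantive point you leave as a black box is the converse half of the difference-quotient characterization: that locally uniform convergence of all iterated difference quotients of order $\le r$ forces $f\in\Cr rM{}$. This is true but is the only part of Step 2 requiring a genuine analytic argument; for instance, writing $F_h=h^{-1}\Delta_{he_j}f$ one has $F_h\to \de_j f$ locally uniformly and $h^{-1}\Delta_{he_i}F_h\to G_{ij}$ locally uniformly, and telescoping $F_h(x+Nhe_i)-F_h(x)$ with $h=t/N$ and letting $N\to\infty$ identifies $\de_jf(x+te_i)-\de_jf(x)$ with $\int_0^t G_{ij}(x+se_i)\,ds$, whence $\de_i\de_jf=G_{ij}$; higher orders follow by induction. (Alternatively, the second assertion of the lemma follows at once from the Lusin--Souslin theorem, since the inclusion $\Cr rMk\hookrightarrow\Cr 0Mk$ is a continuous injection between Polish spaces, and injective Borel images of Polish spaces are Borel.)
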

\begin{proof}See \cite[p. 43,44]{NazarovSodin2} or \cite[p. 374]{bogachev}.
\end{proof}

As a consequence we have that the Borel $\sigma$-algebra $\mathcal{B}(\Cr rMk)$ is  the restriction to $\Cr rMk$ of the product $\sigma$-algebra of $(\R^k)^M$. It follows that $X$ is a $\mathcal{C}^r$ RF on $M$ if and only if it is $\P-$almost surely equal to a random element of $\Cr rMk$.

A second consequence is that if $X$ is a $\mathcal{C}^r$ RF, then the associated map $\tilde{X}\colon \Omega\times M\to \R^k$ is measurable, being the composition $e\circ (X\times \text{id})$, where $e\colon \Cr rMk \times M \to \R^k$ is the continous map defined by $e(f,p)=f(p)$.




If $X$ is a $\mathcal{C}^r$ RF, then it induces a probability measure $X_*\P$ on $\Cr rMk$, or equivalently (because of Lemma \ref{borelemm1}) a probability measure on $\Cr 0Mk$ that is supported on $\Cr rMk$. We say that two RFs are \emph{equivalent} if they induce the same measure; note that this can happen even if they are defined on different probability spaces.

It is easy to see that every probability measure $\mu$ on $\Cr rMk$ is induced by some RF (just take $\Omega=\Cr rMk$, $\mu=\P$ and define $X$ to be the identity, then clearly $\mu=X_*\P$). This means that the study of $\mathcal{C}^r$ random fields up to equivalence corresponds to the study of Borel probability measures on $\Cr rMk$. 

Note that, as a consequence of Lemma \ref{borelemm1}, a Borel measure $\mu$ on $\Cr rMk$ is uniquely determined by its \emph{finite dimensional distributions}, which are the measures induced on $\R^{kj}$ by evaluation on $j$ points.

We will write $\mu=[X]$ to say that the probability measure $\mu$ is induced by a random field $X$.
In particular we define the \emph{Gaussian measures} on $\Cr rMk$ to be those measures that are induced by a $\mathcal{C}^r$ GRF, equivalently we give the following measure-theoretic definition.
\begin{defi}[Gaussian measure]\label{def:gauss}
Let $M$ be a smooth manifold and let $r\in \N\cup \{\infty\}$, $k\in \N$.
A \emph{Gaussian measure} on $\Cr rMk$ is a probability measure on the topological space $\Cr rMk$, with the property that for any finite set of points $p_1,\dots p_j\in M$, the measure induced on $\R^{jk}$ by the map $f\mapsto (f(p_1),\dots,f(p_j))$ is Gaussian (centered and possibly degenerate).
We denote by $\G(E^r)$ the set of Gaussian probability measures on $E^r=\mathcal{C}^r(M, \R^k).$
\end{defi}
\begin{remark}\label{rem:bridge}
In general a Gaussian measure on a topological vector space $W$ is defined as a Borel measure on $W$ such that all the elements in $W^*$ are Gaussian random variables (see \cite{bogachev}). In the case $W=\Cr rMk$, this is equivalent to Definition \ref{def:gauss}, because the set of functionals $f\mapsto a_1f(p_1)+\dots+a_jf(p_j))$ is dense in the topological dual $W^*$ (Theorem \ref{thm:deltadense} of Appendix \ref{app:dual}), therefore every continuous linear functional $\lambda\in W^*$ can be obtained as the almost sure limit of a sequence of Gaussian variables and thus it is Gaussian itself.
\end{remark}

We prove now a simple Lemma that will be needed in the following. Given a differentiable map $f\in \mathcal{C}^r(M, \R^k)$ with $r\geq 1$, and a smooth vector field $v$ on $M$, we denote by $vf$ the derivative of $f$ in the direction of $v$.

\begin{lemma}\label{vXisGRF}
Let $X\in\g rMk$ and let $v$ be a smooth vector field on $M$. Then $vX\in\g {r-1}Mk$.
(Notice that, as a consequence, the $r$-jet of a $\mathcal{C}^r$ GRF is a $\mathcal{C}^0$ GRF.)
\end{lemma}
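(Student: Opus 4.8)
The plan is to reduce the statement to the fact that differentiation in a direction $v$ is a continuous linear operation on $E^r$ with values in $E^{r-1}$, and then observe that Gaussianity is preserved under such operations because it can be checked on finite-dimensional evaluations, approximating $v$-derivatives by difference quotients. Concretely, first I would note that $vX$ is indeed a $\mathcal{C}^{r-1}$ random field: almost surely $X_\w\in E^r$, and $f\mapsto vf$ is a (continuous) linear map $E^r\to E^{r-1}$, so $vX=(v)(X)$ is $\P$-almost surely a random element of $E^{r-1}$; measurability follows since, by Lemma \ref{borelemm1}, the Borel $\sigma$-algebra of $E^{r-1}$ is generated by the evaluation maps $f\mapsto f(p)$, and $p\mapsto (vX)(p)$ is a measurable function of $X$ (see below).

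Next I would check the Gaussian property. By Definition \ref{def:gauss} (equivalently Definition \ref{def:GRF}), it suffices to show that for any finite collection of points $p_1,\dots,p_j\in M$ the vector $\big((vX)(p_1),\dots,(vX)(p_j)\big)\in\R^{jk}$ is a centered Gaussian vector. Fix such points; choose coordinate charts and integral curves $\gamma_i$ of $v$ with $\gamma_i(0)=p_i$, so that
\be
(vX)(p_i)=\frac{d}{dt}\Big|_{t=0}X(\gamma_i(t))=\lim_{t\to 0}\frac{X(\gamma_i(t))-X(p_i)}{t},
\ee
the limit holding pointwise almost surely since $X$ is almost surely $\mathcal{C}^1$ (as $r\ge 1$). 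For each fixed $t\neq 0$, the vector $\big(\tfrac{X(\gamma_1(t))-X(p_1)}{t},\dots,\tfrac{X(\gamma_j(t))-X(p_j)}{t}\big)$ is a linear image of $\big(X(\gamma_1(t)),X(p_1),\dots,X(\gamma_j(t)),X(p_j)\big)$, which is Gaussian because $X$ is a GRF; hence each difference quotient vector is a centered Gaussian vector in $\R^{jk}$. Since the class of centered Gaussian vectors in a fixed finite-dimensional space is closed under almost sure (equivalently, in-distribution) limits — the covariance matrices converge because they converge entrywise, being the covariances of the converging components — the limit vector $\big((vX)(p_1),\dots,(vX)(p_j)\big)$ is centered Gaussian. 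This proves $vX\in\g{r-1}Mk$.

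For the parenthetical remark: the $r$-jet $j^rX$ has, in any chart, components that are iterated $v$-derivatives of $X$ for coordinate vector fields $v$, i.e.\ partial derivatives $\de_\a X$ with $|\a|\le r$; each of these is, by induction on $|\a|$ using the above, a $\mathcal{C}^0$ GRF, and a finite tuple of jointly-evaluated GRFs is again Gaussian (the joint finite-dimensional distributions are Gaussian, by the same difference-quotient argument applied simultaneously), so $j^rX$ is a $\mathcal{C}^0$ GRF with values in the jet bundle $J^r(M,\R^k)$.

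I do not expect a serious obstacle here; the only points requiring minor care are (i) the measurability of $vX$, which I would handle exactly as in the discussion following Lemma \ref{borelemm1} — $\widetilde{vX}\colon\Omega\times M\to\R^k$ is the composition of $X\times\mathrm{id}$ with the continuous evaluation-of-derivative map $E^r\times M\to\R^k$, $(f,p)\mapsto (vf)(p)$ — and (ii) the passage to the limit in the difference quotients, where I would emphasize that pointwise a.s.\ convergence of a sequence of $\R^{jk}$-valued Gaussian vectors forces convergence of the covariance matrices and hence Gaussianity of the limit (using, e.g., that convergence in probability of Gaussian vectors implies convergence of means and covariances, cf.\ the remarks on narrow convergence in Section~\ref{sec:preliminaries}).
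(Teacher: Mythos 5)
Your proof is correct and follows essentially the same route as the paper: both express $vX$ at finitely many points as an almost sure limit of difference quotients (linear images of Gaussian vectors) and conclude via closedness of Gaussian vectors under a.s.\ limits. Your use of integral curves of $v$ in place of the paper's adapted coordinate chart, and your added remarks on measurability and the jet case, are only cosmetic differences.
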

\begin{proof}
Since $ X\in \Cr rMk$ almost surely, then $vX\in \Cr {r-1}Mk$ almost surely, thus $vX$ defines a probability measure supported on $\Cr {r-1}Mk$. To prove that it is a Gaussian measure, note that $vX(p)$ is either a $N(0,0)$ Gaussian, if $v_p=0$, or an almost sure limit of Gaussian vectors, indeed passing to a coordinate chart $x^1,\dots,x^m$ centered at $p$ s.t. $v_p=\frac{\de}{\de x^1}$, we have 
\[
vX(p)=\lim_{t\to 0}\frac{X(t,0,\dots, 0)-X(0,0,\dots, 0)}{t}\quad\text{a.s.}
\]
therefore it is Gaussian. The analogous argument can be applied when we consider a finite number of points in $M$.
\end{proof}

\subsection{The topology of random fields.} We denote by $\mathscr{P}(E^r)$, the set of all  Borel probability measures on $E^r$. We shall endow the space $\mathscr{P}(E^r)$ with the \emph{narrow topology}, defined as follows. Let $\mathcal{C}_b(E^r)$ be the Banach space of all bounded continuous functions from $E^r$ to $\R$.
\begin{defi}[Narrow topology]\label{defi:narrowtop}
The narrow topology on $\mathscr{P}(E^r)$ is defined as the coarsest topology such that for every $\varphi\in \mathcal{C}_b(E^r)$ the map $\textrm{ev}_{\varphi}:\mathcal{P}(E^r)\to \R$ given by:
\be\textrm{ev}_{\varphi}:\P \mapsto \int_{E^r} \varphi \,d\P\ee
is continuous.
\end{defi}
In other words, the narrow topology is the topology induced by the weak-$*$ topology of $\mathcal{C}_b(E^r)^*$, via the inclusion
\[
\mathscr{P}(E^r)\hookrightarrow \mathcal{C}_b(E^r)^*
\]
\[
\P\mapsto \E\{\cdot\}
\]
\begin{remark} The narrow topology is also classically refered to as the \emph{weak topology} (see \cite{Parth}, \cite{bogachev} or \cite{Billingsley}). We avoid the latter terminology to prevent confusion with the topology induced by the weak topology of $\mathcal{C}_b(E^r)^*$, which is strictly finer. Indeed if a sequence of probability measures $\mu_n$ converges to a probability measure $\mu$ in the weak topology of $\mathcal{C}_b(E^r)^*$, then for any measurable set $A\in E^r$, it holds $\lim_{n\to\infty}\mu_n(A)=\mu(A)$. This is a strictly stronger condition than narrow convergence, see Portmanteau's theorem \cite{Billingsley}.\end{remark}

\begin{center}
  $\ast$~$\ast$~$\ast$
\end{center}

Convergence of a sequence of probability measures $\mu_n\in \mathscr{P}(E^r)$ in the narrow topology is denoted as $\mu_n\nrw \mu$.
From the point of view of random fields, $[X_n]\nrw[X]$ in $\mathscr{P}(E^r)$, if and only if 
\[
\lim_{n\to \infty}\E\{\varphi(X_n)\}=\E\{\varphi(X)\} \qquad \forall \varphi\in \mathcal{C}_b(E^r)
\]
and in this case we will simply write $X_n\nrw X$.
This notion of convergence of random variables is also called convergence \emph{in law} or \emph{in distribution}. 

To understand the notion of narrow convergence it is important to recall Skorohod's theorem (see \cite[Theorem 6.7]{Billingsley}), which states that $\mu_n\nrw \mu_0$ in $\mathscr{P}(E^r)$ if and only if there is a sequence $X_n$ of random elements of $E^r$, such that $\mu_n=[X_n]$ and $X_n\to X_0$ almost surely. In other words, narrow convergence is equivalent to almost sure convergence from the point of view of the measures $\mu_n$.

However, for a given sequence of random fields $X_n$, the notion of narrow convergence is even weaker than that of convergence in probability. The subtle difference, as showed in Lemma \ref{lem:convprob} below, is that the latter takes into account the joint distributions. 
\begin{lemma}\label{lem:convprob} 
Let $X_d, X \in \g r Mk$. The sequence $X_d$ convergese to $X$ in probability if and only if $(X_d,X)\nrw (X,X)$.
\end{lemma}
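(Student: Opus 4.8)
The plan is to exploit the fact that for random fields valued in the Polish space $E^r$, convergence in probability can be detected by a metric, and that the diagonal map is what makes "convergence in probability" strictly stronger than "convergence in law." Fix a complete metric $\rho$ on $E^r$ inducing its topology (such a metric exists since $E^r$ is Fréchet, hence Polish, as recalled in Section \ref{sec:spasmo}). Recall that $X_d\to X$ in probability means $\rho(X_d,X)\to 0$ in probability, i.e. $\P\{\rho(X_d,X)>\e\}\to 0$ for every $\e>0$; note this is a statement about the \emph{joint} law of $(X_d,X)$, which is a Borel probability measure on $E^r\times E^r$, itself a Polish space.

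For the direction ($\Rightarrow$), assume $X_d\to X$ in probability. I would first show that $(X_d,X)\to (X,X)$ in probability in the product space $E^r\times E^r$: with the metric $\rho\oplus\rho$ on the product, $\rho\big((X_d,X),(X,X)\big)=\rho(X_d,X)\to 0$ in probability, trivially. Since convergence in probability implies convergence in law (for any sequence of random elements of a metric space — this is standard, e.g. via the subsequence criterion: every subsequence has a further subsequence converging a.s., hence narrowly, and the limits all agree, so the full sequence converges narrowly), we conclude $(X_d,X)\nrw (X,X)$.

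For the direction ($\Leftarrow$), assume $(X_d,X)\nrw (X,X)$ in $\mathscr{P}(E^r\times E^r)$. Fix $\e>0$ and consider the set $A_\e=\{(f,g)\in E^r\times E^r : \rho(f,g)\ge \e\}$, which is closed in the product. By the Portmanteau theorem \cite{Billingsley}, narrow convergence gives $\limsup_{d\to\infty}\P\{(X_d,X)\in A_\e\}\le \P\{(X,X)\in A_\e\}$. But $(X,X)$ is supported on the diagonal $\{(f,f):f\in E^r\}$, on which $\rho=0<\e$, so $\P\{(X,X)\in A_\e\}=0$. Hence $\P\{\rho(X_d,X)\ge\e\}\to 0$ for every $\e>0$, which is exactly convergence in probability.

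The only genuinely delicate point is measurability/topological bookkeeping: one must be sure that $(X_d,X)$ is a bona fide random element of the Polish product $E^r\times E^r$ (so that its law and the Portmanteau theorem apply), that $\rho$ is jointly Borel measurable (true since it is continuous), and that the product-space narrow convergence hypothesis is the correct reading of the statement $(X_d,X)\nrw(X,X)$ — all of which follow from $E^r$ being Polish (Lemma \ref{borelemm1} and the discussion in Section \ref{sec:spasmo}) together with the fact that the Borel $\sigma$-algebra of a product of second-countable spaces is the product of the Borel $\sigma$-algebras. Once this is in place, both implications are short; I do not expect any serious obstacle beyond citing Portmanteau and the subsequence characterization of convergence in probability correctly.
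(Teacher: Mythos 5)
Your proposal is correct and follows essentially the same route as the paper: both directions reduce to the behaviour of the metric $\rho(X_d,X)$, the forward implication being the standard ``in probability implies in law'' for the pair, and the converse exploiting that $[(X,X)]$ is concentrated on the diagonal. The only cosmetic difference is that the paper pushes the joint law forward through the continuous map $\rho$ and invokes the equivalence of narrow convergence to a constant with convergence in probability, while you apply Portmanteau directly to the closed set $\{\rho\ge\e\}$ in the product --- these are the same argument.
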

\begin{proof}
First, note that if $X_d\to X$ in probability, then $(X_d,X)\to (X,X)$ in probability and therefore $(X_d,X)\nrw (X,X)$. For the converse,
let $d$ be any metric on $\Cr rMk$. Since $d$ is a continuous function, if $(X_d,X)\nrw (X,X)$ then $d(X_d,X)\nrw 0$, which is equivalent to convergence in probability, by definition.
\end{proof}

We recall the following useful fact relating properties of the topology of $E$ to properties of the narrow topology on $\mathscr{P}(E);$ for the proof the reader is referred to \cite[p. 42-46]{Parth}.
\begin{prop}\label{prop:equiva}The following properties are true:
\begin{enumerate}
\item $E$ is separable and metrizable if and only if $\mathscr{P}(E)$ is separable and metrizable. In this case, the map $E\hookrightarrow \mathscr{P}(E)$, defined by $f\mapsto \delta_f$, is a closed topological embedding and the convex hull of its image is dense in $\mathscr{P}(E)$.

\item $E$ is compact and metrizable if and only if $\mathscr{P}(E)$ is compact and metrizable.

\item $E$ is Polish if and only if $\mathscr{P}(E)$ is Polish.

\end{enumerate}
%
\end{prop}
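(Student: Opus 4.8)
The plan is to reduce every assertion to two classical inputs: the Lévy--Prokhorov metric, which handles metrizability and separability of $\mathscr{P}(E)$, and the embedding of a separable metrizable space into the Hilbert cube $\D^{\N}$, which transfers statements to the compact setting. Throughout one assumes, harmlessly, that $E$ is completely regular and Hausdorff (as it is in all cases of interest here). In the reverse implications the bridge is the Dirac map $\iota\colon E\to\mathscr{P}(E)$, $f\mapsto\delta_f$, so I would first record its properties: it is continuous straight from the definition of the narrow topology, and it is a homeomorphism onto its image because for a converging net $\delta_{f_\alpha}\to\delta_f$ forces $\varphi(f_\alpha)\to\varphi(f)$ for all $\varphi\in\mathcal{C}_b(E)$, i.e.\ $f_\alpha\to f$ by complete regularity (once a compatible metric $d\le1$ on $E$ is available, one even checks $\pi(\delta_f,\delta_g)=d(f,g)$, so $\iota$ is isometric). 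Moreover $\iota(E)$ is closed: if $\delta_{f_n}\to\mu$ narrowly, the Portmanteau theorem gives $\limsup_n\delta_{f_n}(F)\le\mu(F)$ for every closed $F$, and applying this with $F=E\setminus B$ for small disjoint open balls $B$ around two hypothetical distinct points of $\mathrm{supp}(\mu)$ forces $\mathrm{supp}(\mu)$ to be a single point, i.e.\ $\mu\in\iota(E)$. Since a closed subspace of a separable metrizable (resp.\ compact metrizable, resp.\ Polish) space inherits that property, each reverse implication in (1)--(3) follows once the forward one is known.

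For the forward part of (1), I would fix a \emph{totally bounded} compatible metric $d\le1$ on the separable metrizable space $E$ — one exists because $E$ embeds in $\D^{\N}$ — and set
\[
\pi(\mu,\nu)=\inf\bigl\{\varepsilon>0\ :\ \mu(F)\le\nu(F^{\varepsilon})+\varepsilon\ \text{ for every closed }F\subseteq E\bigr\},\qquad F^{\varepsilon}=\{x:d(x,F)\le\varepsilon\}.
\]
One checks $\pi$ is a metric, and the Portmanteau theorem shows it induces the narrow topology: $\pi(\mu_n,\mu)\to0$ iff $\liminf_n\mu_n(U)\ge\mu(U)$ for every open $U$. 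Separability of $\mathscr{P}(E)$ then follows by exhibiting the countable $\pi$-dense family of finitely supported measures $\sum_i q_i\delta_{x_i}$ with $q_i\in\mathbb{Q}_{\ge0}$, $\sum_i q_i=1$, the $x_i$ ranging over a fixed countable dense subset of $E$: an arbitrary $\mu$ is approximated by chopping $E$, up to a set of small $\mu$-measure, into finitely many Borel pieces of small $d$-diameter — possible since $d$ is totally bounded and Borel measures on a metric space are inner regular — replacing $\mu$ by the associated atomic measure, and finally moving each atom to a nearby $x_i$ and rounding its weight. The same family lies in the convex hull of $\iota(E)$, which is therefore dense.

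For (2), if $E$ is compact metrizable then $\mathcal{C}(E)$ is a separable Banach space and, via $\mu\mapsto(\varphi\mapsto\int\varphi\,d\mu)$, $\mathscr{P}(E)$ is identified with the set of positive, unit-mass functionals in the closed unit ball of $\mathcal{C}(E)^{*}$; this set is weak-$*$ closed, hence weak-$*$ compact by Banach--Alaoglu, the weak-$*$ topology on the ball is metrizable since $\mathcal{C}(E)$ is separable, and it agrees with the narrow topology. For (3), I would embed the Polish space $E$ as a $G_\delta$ set $\bigcap_n U_n$ (each $U_n$ open) in a compact metrizable $\widehat E$; then $\mathscr{P}(\widehat E)$ is compact metrizable by (2), and
\[
\mathscr{P}(E)=\bigcap_n\{\mu\in\mathscr{P}(\widehat E):\mu(U_n)=1\}
\]
is $G_\delta$ in $\mathscr{P}(\widehat E)$, since $\mu\mapsto\mu(\widehat E\setminus U_n)$ is upper semicontinuous and hence $\{\mu:\mu(U_n)=1\}=\bigcap_{m\ge1}\{\mu:\mu(\widehat E\setminus U_n)<1/m\}$ is $G_\delta$; a $G_\delta$ subset of a Polish space is Polish. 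Combined with the closed-subspace remark of the first paragraph, this yields both directions of (2) and (3).

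The technical heart will be the second paragraph: verifying that $\pi$ genuinely induces the narrow topology (assembling the Portmanteau equivalences in both directions, the subtle one using total boundedness of $d$) and proving $\pi$-density of the finitely supported rational measures, which rests on approximating an arbitrary Borel probability measure by atomic ones — the one place where regularity and tightness of measures on metric spaces are genuinely used. Everything else is soft topology once this step and the properties of $\iota$ are in hand.
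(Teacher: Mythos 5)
Your proof is correct and follows the classical route; the paper itself gives no argument here, deferring to Parthasarathy (pp.\ 42--46), whose proof has the same skeleton --- the Dirac embedding $f\mapsto\delta_f$ for the reverse implications, density of finitely supported measures for separability and for the convex-hull claim, Riesz plus Banach--Alaoglu for compactness, and the $G_\delta$ trick for Polishness --- differing only in that the cited source metrizes $\mathscr{P}(E)$ through the embedding into $\mathscr{P}(\text{Hilbert cube})$ and a countable separating family of continuous functions, whereas you use the L\'evy--Prokhorov metric directly. The one step worth making explicit in part (3) is that the bijection between $\mathscr{P}(E)$ and $\{\mu\in\mathscr{P}(\widehat E)\colon \mu(U_n)=1\text{ for all }n\}$ is a homeomorphism for the two narrow topologies (the pushforward along a topological embedding is a topological embedding, which at that stage you may prove via Portmanteau exactly as in Corollary \ref{inducedmap}, part (1) having already been established); this is standard and does not affect correctness.
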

The following corollary will be useful for us.
\begin{cor}\label{inducedmap}
Let $E_1$ and $E_2$ be two separable metric spaces. Let $\pi\colon E_1\to E_2$ be continuous. Then the induced map $\pi_*\colon \mathscr{P}(E_1)\to \mathscr{P}(E_2)$ is continuous. If moreover $\pi$ is a topological embedding, then $\pi_*$ is a topological embedding as well.
\end{cor}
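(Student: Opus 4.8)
The plan is to deduce everything from Proposition \ref{prop:equiva}(1) and the naturality of the map $f\mapsto\delta_f$. First I would check continuity of $\pi_*$. Given $\varphi\in\mathcal{C}_b(E_2)$, the composition $\varphi\circ\pi$ lies in $\mathcal{C}_b(E_1)$, and for any $\mu\in\mathscr{P}(E_1)$ the change-of-variables formula gives $\int_{E_2}\varphi\,d(\pi_*\mu)=\int_{E_1}(\varphi\circ\pi)\,d\mu$, i.e. $\mathrm{ev}_\varphi\circ\pi_*=\mathrm{ev}_{\varphi\circ\pi}$. The right-hand side is continuous by the definition of the narrow topology on $\mathscr{P}(E_1)$, so $\mathrm{ev}_\varphi\circ\pi_*$ is continuous for every $\varphi\in\mathcal{C}_b(E_2)$; since the narrow topology on $\mathscr{P}(E_2)$ is by Definition \ref{defi:narrowtop} the coarsest making all such $\mathrm{ev}_\varphi$ continuous, this forces $\pi_*$ to be continuous. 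Note that separability and metrizability of $E_1,E_2$ are not even needed for this half, but they are what allow us to invoke Proposition \ref{prop:equiva}.

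Now suppose $\pi$ is a topological embedding. I must show $\pi_*$ is injective and a homeomorphism onto its image; equivalently, that $\pi_*$ is injective and that $\pi_*^{-1}$ is continuous on $\pi_*(\mathscr{P}(E_1))$. For injectivity: if $\pi$ is injective and, say, a Borel isomorphism onto its image (which holds since an embedding of separable metric spaces is bi-measurable onto its image), then $\pi_*\mu=\pi_*\nu$ implies $\mu=\nu$ by restricting to the Borel subset $\pi(E_1)\subset E_2$ and pulling back along the Borel-inverse of $\pi$. The heart of the argument is continuity of the inverse. Here I would use that, since $\pi$ is a homeomorphism onto its image $E_1'=\pi(E_1)\subset E_2$, there is a retraction-type construction at the level of bounded continuous functions: for any $\psi\in\mathcal{C}_b(E_1)$ one would like a $\varphi\in\mathcal{C}_b(E_2)$ with $\varphi\circ\pi=\psi$. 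In general a bounded continuous function on a subspace need not extend continuously to all of $E_2$, so the clean way is instead to argue directly on nets. Let $\mu_\alpha\in\mathscr{P}(E_1)$ be a net with $\pi_*\mu_\alpha\nrw\nu$ in $\mathscr{P}(E_2)$. First I would observe $\nu$ is supported on the closed set? — not quite, $\pi(E_1)$ need only be Borel — so instead I would use Proposition \ref{prop:equiva}(1) applied to $E_1$: $\mathscr{P}(E_1)$ is separable metrizable, hence it suffices to work with sequences, and then invoke Skorohod's theorem to realize $\pi_*\mu_n=[\,\pi(Y_n)\,]$ with $\pi(Y_n)\to Z$ a.s. in $E_2$; since $\pi(E_1)$ carries the subspace topology and $\pi$ is a homeomorphism onto it, $Z\in\overline{\pi(E_1)}$ a.s., and on the event $Z\in\pi(E_1)$ we get $Y_n=\pi^{-1}(\pi(Y_n))\to\pi^{-1}(Z)$ a.s. The only gap is the possibility that the limit escapes $\pi(E_1)$; I would rule this out by noting tightness of $\{\mu_n\}$ (Prokhorov, valid since these spaces are Polish in the applications, or more carefully using that $\pi_*\mu_n$ converges hence is tight, and tight sets pull back through the embedding to tight sets), so that the limiting mass stays inside $\pi(E_1)$ and $Z\in\pi(E_1)$ a.s.

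The main obstacle is precisely this last point: a topological embedding need not have closed image, so one cannot naively say the limit measure is supported on $\pi(E_1)$, and a bounded continuous function on $E_1$ need not extend to $E_2$. The cleanest fix — and the one I expect the authors use — is to reduce to the closed-embedding case via Proposition \ref{prop:equiva}(1) (which says $\mathscr{P}(E)$ is itself separable metric, so we may argue with sequences and use Portmanteau/Skorohod), combined with a tightness argument showing no mass escapes the image; alternatively, factor $\pi$ as $E_1\xrightarrow{\sim}\pi(E_1)\hookrightarrow E_2$ and handle the homeomorphism $E_1\cong\pi(E_1)$ trivially and the inclusion $\pi(E_1)\hookrightarrow E_2$ by the tightness/Portmanteau bookkeeping. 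Everything else is the routine change-of-variables and the universal property of the narrow topology already spelled out above.
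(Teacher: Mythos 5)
Your first two steps (continuity of $\pi_*$ via $\mathrm{ev}_\varphi\circ\pi_*=\mathrm{ev}_{\varphi\circ\pi}$, and injectivity by pulling Borel sets back through the embedding) are correct and coincide with the paper's argument. The problem is the third step. The route you propose for continuity of $\pi_*^{-1}$ — Skorohod plus a tightness argument to rule out mass escaping $\pi(E_1)$ — has a genuine gap: the direction of Prokhorov's theorem asserting that a convergent (or relatively compact) sequence of measures is tight requires the underlying space to be Polish, whereas here $E_1,E_2$ are only separable metric; and even granting tightness of $\{\pi_*\mu_n\}$ in $E_2$, a compact set $K\subset E_2$ does not pull back to a compact (or even relatively compact) subset of $E_1$ when $\pi(E_1)$ is not closed, so ``tight sets pull back through the embedding to tight sets'' is false in exactly the situation you are worried about. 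The Skorohod detour also forces you to confront whether the a.s.\ limit $Z$ lies in $\pi(E_1)$, which is delicate when $\pi(E_1)$ is neither closed nor, in general, Borel.

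The paper avoids all of this with a one-line Portmanteau argument that you gesture at but do not carry out. To prove $\pi_*$ is a homeomorphism onto its image one only needs: if $\pi_*\P_n\nrw\pi_*\P_0$ with $\P_n,\P_0\in\mathscr{P}(E_1)$, then $\P_n\nrw\P_0$ — the limit is \emph{already given} as a pushforward, so no mass can ``escape.'' Since $\pi$ is an embedding, every open $U\subset E_1$ equals $\pi^{-1}(V)$ for some open $V\subset E_2$, hence
\be
\liminf_n \P_n\{U\}=\liminf_n \pi_*\P_n\{V\}\ge \pi_*\P_0\{V\}=\P_0\{U\},
\ee
and the liminf inequality over all open $U$ is precisely the Portmanteau criterion for $\P_n\nrw\P_0$. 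Combined with Proposition \ref{prop:equiva}(1) (so that sequential arguments suffice), this closes the proof without Skorohod, tightness, or any extension of bounded continuous functions. I recommend replacing your third step by this argument.
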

\begin{proof}
If $\pi$ is continuous, then for any bounded and continuous real function $\varphi\in\mathcal{C}_b(E_2)$, the composition $\varphi\circ \pi$ is in $\mathcal{C}_b(E_1)$. Hence, the function $\int_{E_1}(\varphi\circ \pi) \colon \mathscr{P}(E_1) \to \R$ defined as $\P\mapsto \int_{E_1}(\varphi\circ \pi)d\P$ is continuous. Observe that for any $\P\in \mathscr{P}(E_1)$
\be 
\int_{E_1}(\varphi\circ \pi)d\P =\int_{E_2}\varphi \,d(\pi_*\P)=\left(\int_{E_2}\varphi\right)\circ \pi_*(\P),
\ee
thus the composition $(\int_{E_2}\varphi)\circ \pi_*\colon \mathscr{P}(E_1)\to \R$ is continuous for any $\varphi\in\mathcal{C}_b(E_2)$. From the definition of the topology on $\mathscr{P}(E_2)$, it follows that $\pi_*$ is continuous.


Assume now that $\pi$ is a topological embedding. This is equivalent to say that $\pi$ is injective and any open set $U\subset E_1$ is of the form $\pi^{-1}(V)$ for some open subset $V\subset E_2$, and the same for Borel sets. It follows that $\pi_*$ is injective, indeed if two probability measures $\P_1, \P_2\in \mathscr{P}(E_1)$, have equal induced measures $\pi_*\P_1=\pi_*\P_2$, then 
\be 
\P_1\{\pi^{-1}(V)\}=\P_2\{\pi^{-1}(V)\}
\ee
for any Borel subset $V\subset E_2$, thus $\P_1\{U\}=\P_2\{U\}$ for any Borel subset $U\subset E_1$ and $\P_1=\P_2$.

It remains to prove that $\pi_*^{-1}$ is continuous on the image of $\pi_*$. Let $\P_n\in \mathscr{P}(E_1)$ be such that $\pi_*\P_n\nrw \pi_*\P_0$. Let $U\subset E_1$ be open, then there is some open subset $V\subset E_2$ such that $\pi^{-1}(V)=U$ and, by Portmanteau's theorem (see \cite[p. 40]{Parth}), we get
\be 
\liminf_n \P_n\{U\}=\liminf_n \pi_*\P_n\{V\}\ge \pi_*\P_0\{V\}=\P_0\{U\}.
\ee
This implies that $\P_n\nrw \P_0$. We conclude using point (1) of Proposition \ref{prop:equiva},and the fact that on metric spaces, sequential continuity is equivalent to continuity.
\end{proof}
\begin{example}
Let $\phi \colon M \to N$ be a $\mathcal{C}^r$ maps between smooth manifolds, then the map $\phi^*\colon \mathcal{C}^r(N,W)\to \mathcal{C}^r(M,W)$ defined as $\phi^*(f)=f\circ \phi$ is continuous, therefore the induced map between the spaces of probabilities, which we still denote as $\phi^*$, is continuous. The same holds for the map $\phi_*\colon \mathcal{C}^r(W,M)\to \mathcal{C}^r(W,N)$, such that $\phi_*(f)=\phi\circ f$. 
\end{example}

Note that $\mathcal{C}^r$ narrow convergence implies $\mathcal{C}^s$ narrow convergence, for every $s\le r$, but not vice versa. Indeed there are continuous injections
\be \G(E^{\infty})\subset \cdots \subset \G(E^r)\subset \cdots \subset \G(E^0)\subset \mathscr{P}(E^0).\ee

\begin{prop}
$\G(E^r)$ is closed in $\mathscr{P}(E^r)$.
\end{prop}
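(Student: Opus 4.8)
The plan is to exhibit $\G(E^r)$ as an intersection of preimages of closed sets under continuous maps. For a finite tuple $\mathbf{p}=(p_1,\dots,p_j)$ of points of $M$, let $\mathrm{ev}_{\mathbf p}\colon E^r\to\R^{jk}$ be the evaluation map $f\mapsto(f(p_1),\dots,f(p_j))$, which is continuous. Since $E^r=\Cr rMk$ is Polish, hence separable and metrizable, and $\R^{jk}$ is separable and metrizable, Corollary \ref{inducedmap} shows that the pushforward $(\mathrm{ev}_{\mathbf p})_*\colon\mathscr{P}(E^r)\to\mathscr{P}(\R^{jk})$ is continuous. By Definition \ref{def:gauss}, a measure $\mu\in\mathscr{P}(E^r)$ belongs to $\G(E^r)$ precisely when $(\mathrm{ev}_{\mathbf p})_*\mu$ is a centered (possibly degenerate) Gaussian measure on $\R^{jk}$ for every such $\mathbf p$; writing $\mathcal N_N\subset\mathscr{P}(\R^N)$ for the set of centered Gaussian measures, this means
\[
\G(E^r)\;=\;\bigcap_{j\ge 1}\ \bigcap_{\mathbf p\in M^j}\ (\mathrm{ev}_{\mathbf p})_*^{-1}\bigl(\mathcal N_{jk}\bigr).
\]
Thus it suffices to prove that $\mathcal N_N$ is closed in $\mathscr{P}(\R^N)$ for every $N$.

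This finite-dimensional statement I would prove by a characteristic function argument. Since $\mathscr{P}(\R^N)$ is metrizable, take a sequence $\nu_n=N(0,\Sigma_n)$ with $\nu_n\nrw\nu$. Narrow convergence forces pointwise convergence of characteristic functions, so $e^{-\frac12\xi^T\Sigma_n\xi}\to\widehat\nu(\xi)$ for every $\xi\in\R^N$. Because $\widehat\nu$ is continuous with $\widehat\nu(0)=1$, testing along each ray $t\mapsto t\xi$ and using that $\widehat\nu$ is a single well-defined function rules out $\xi^T\Sigma_n\xi\to+\infty$ (even along a subsequence) and shows that $\xi^T\Sigma_n\xi$ converges to a finite limit for every $\xi$; by polarization the symmetric positive semidefinite matrices $\Sigma_n$ converge to some symmetric positive semidefinite $\Sigma$, and hence $\widehat\nu(\xi)=e^{-\frac12\xi^T\Sigma\xi}$. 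By uniqueness of characteristic functions, $\nu=N(0,\Sigma)\in\mathcal N_N$. Alternatively, one could invoke the corresponding closedness statement for Gaussian measures from \cite{bogachev}.

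Combining the two steps finishes the proof: each $(\mathrm{ev}_{\mathbf p})_*^{-1}(\mathcal N_{jk})$ is closed in $\mathscr{P}(E^r)$, being the preimage of the closed set $\mathcal N_{jk}$ under a continuous map, and an arbitrary intersection of closed sets is closed. The only genuinely non-formal ingredient is the finite-dimensional lemma that $\mathcal N_N$ is closed, and within that the point requiring a little care is excluding that a quadratic form $\xi^T\Sigma_n\xi$ escapes to infinity — which is exactly where continuity of the limiting characteristic function at the origin enters. Everything else is bookkeeping with continuity of pushforward maps already established in Corollary \ref{inducedmap}.
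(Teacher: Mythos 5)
Your proof is correct and follows essentially the same route as the paper: reduce to finite-dimensional marginals via the continuity of the evaluation pushforwards and then use that centered Gaussian measures on $\R^N$ are narrowly closed (the paper phrases this sequentially and simply asserts the finite-dimensional fact, whereas you supply the characteristic-function argument for it). The extra detail is welcome but does not change the underlying argument.
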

\begin{proof}
Let $X_n\in \g rMk$ s.t. $X_n\nrw X\in \mathscr{P}(E^r)$. Then for any $p_1\dots, p_j\in M$ we have
\[
\left(X_n(p_1),\dots ,X_n(p_j)\right)\nrw \left(X(p_1),\dots ,X(p_j)\right) \]
in $\mathscr{P}(\R^{jk})$. Therefore the latter is a Gaussian random vector and thus $[X]\in \G(E^r).$
\end{proof}
\subsection{The covariance function.}
Given a Gaussian random vector $\xi$, it is clear by equation \eqref{gaussianlaweq} that the corresponding measure $\mu=[\xi]$ on $\R^m$ is determined by the covariance matrix $K=\E\{\xi\xi^T\}$. Similarly, if $X\in \g rMk$, then $[X]$ is a measure on $\Cr rMk$ and it is uniquely determined by its finite dimensional distributions, which are the Gaussian measures induced on $\R^{kj}$ by evaluation on $j$ points. It follows that $[X]$ is uniquely determined by the collection of all the covariances of the evaluations at couples of points in $M$, which we call \emph{covariance function}.
\begin{defi}[covariance function]
Given $X\in \g rMk$, we define its \emph{covariance function} as:
\be 
K_X\colon M\times M \to \R^{k\times k} 
\ee
\be 
K_X(p,q)=\E\{X(p)X(q)^T\}.
\ee
\end{defi}
The function $K_X$ is symmetric: $K_X(p,q)^T=K_X(q,p)$ and non-negative definite, which means that for any $p_1,\dots, p_j\in M$ and $\lambda_1,\dots, \lambda_j \in \R^k$, $
\sum_{i=1}^j \lambda_j^TK_X(p_i,p_i)\lambda_j\ge 0.$

The covariance function of a $\mathcal{C}^r$ random field is of class $\mathcal{C}^{r,r}$, see Section \ref{sec:spasmo}. This is better understood by introducing the following object.
Suppose that $X$ is a Gaussian random field on $M$, defined on a probability space $\Prob$, then it defines a map
\be\label{eq:gammaX}
 \gamma_X :M\to L^2\Prob^k\ee
such that $\gamma_X(p)=X(p)$.

To say that $X$ is Gaussian is equivalent to say that span$\{\gamma_X(M)\}$ is a Gaussian subspace of $L^2\Prob^k$, namely a vector subspace whose elements are Gaussian random vectors. 
Next proposition from \cite{NazarovSodin2} will be instrumental for us.

\begin{prop}[Lemma A.3 from \cite{NazarovSodin2}]\label{Ltwocont}
Let $X\in \g rMk$, then the map $\gamma_X\colon M\to L^2\Prob^k$ is $\mathcal{C}^r$. Moreover if $x,y$ are any two coordinate charts on $M$, then
\be 
\E\left\{\de_\a X(x)\left(\de_\beta X(y)\right)^T\right\}=
\langle \de_\a \gamma_X (x), \de_\beta \gamma_X(y)^T\rangle_{L^2\Prob}=
\de_{(\a,\beta)} K_X(x,y).
\ee
for any multi-indices $|\a|, |\beta|\le r$.
\end{prop}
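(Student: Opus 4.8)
The plan is to reduce everything to the finite-dimensional statement that differentiating an $L^2$-valued map commutes with taking inner products, and to the characterization of $\mathcal{C}^r$ regularity in terms of difference quotients. First I would establish that $\gamma_X$ is $\mathcal{C}^r$. Working in a single coordinate chart, it suffices to show that each partial derivative $\partial_\alpha \gamma_X$ with $|\alpha|\le r$ exists as a limit in $L^2(\Omega,\mathbb{P})^k$ and depends continuously on the base point. The key observation is that, because $X$ is Gaussian, the closed linear span $H=\overline{\mathrm{span}}\{X^i(p):p\in M,\ i=1,\dots,k\}\subset L^2(\Omega,\mathbb{P})$ is a Gaussian subspace, and on such a subspace all $L^p$ norms are equivalent (the $L^2$ and $L^q$ norms of a centered Gaussian are comparable by a dimension-free constant). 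Hence an almost sure difference quotient that converges in probability — which is exactly what the hypothesis $X\in\mathcal{C}^r(M,\mathbb{R}^k)$ almost surely gives us, via Lemma \ref{vXisGRF} and its proof — automatically converges in $L^2$, and its $L^2$ limit is the random vector $\partial_\alpha X(p)$. This is the step I expect to be the main obstacle: passing from pointwise-a.s. differentiability of the sample paths to differentiability of the $L^2$-valued map $\gamma_X$, uniformly enough to get continuity of $\partial_\alpha\gamma_X$ rather than mere existence. The Gaussian equivalence of norms, together with Lemma \ref{Lemma:14} applied sample-path-wise, is what makes this work: convergence $p_n\to p$ forces $j^r_{p_n}X\to j^r_pX$ a.s., hence in probability, hence in $L^2$ on the Gaussian subspace, which is precisely continuity of $\partial_\alpha\gamma_X$.

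Once $\gamma_X$ is known to be $\mathcal{C}^r$ with $\partial_\alpha\gamma_X(x)=\partial_\alpha X(x)$ in the $L^2$ sense, the identity
\be
\mathbb{E}\left\{\partial_\alpha X(x)\,\partial_\beta X(y)^T\right\}=\langle \partial_\alpha\gamma_X(x),\partial_\beta\gamma_X(y)^T\rangle_{L^2(\Omega,\mathbb{P})}
\ee
is just the definition of the $L^2$ inner product (entrywise, $\mathbb{E}\{\partial_\alpha X^i(x)\,\partial_\beta X^j(y)\}=\langle\partial_\alpha\gamma_X^i(x),\partial_\beta\gamma_X^j(y)\rangle$). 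So the content is the last equality, $\langle\partial_\alpha\gamma_X(x),\partial_\beta\gamma_X(y)^T\rangle=\partial_{(\alpha,\beta)}K_X(x,y)$. I would prove this by induction on $|\alpha|+|\beta|$, peeling off one derivative at a time. For a single $x$-derivative $\partial/\partial x^\ell$, write the difference quotient of $K_X(\cdot,y)$ at $x$ and use bilinearity and continuity of the inner product: $\langle\tfrac{\gamma_X(x+te_\ell)-\gamma_X(x)}{t},\gamma_X(y)^T\rangle\to\langle\partial_{x^\ell}\gamma_X(x),\gamma_X(y)^T\rangle$ because the first slot converges in $L^2$ and the second is fixed; the left-hand side is the difference quotient of $K_X(\cdot,y)$, so its limit is $\partial_{x^\ell}K_X(x,y)$. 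The same argument in the $y$-slot, and then iteration, yields the general multi-index identity; the differentiated map is continuous in $(x,y)$ because each $\partial_\alpha\gamma_X$ is continuous, so $K_X\in\mathcal{C}^{r,r}$ as claimed.

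A small technical point to handle carefully is that the difference-quotient limits must be taken in a fixed chart and one must check that the exchange of limits in the induction is legitimate — but this is immediate, since at each stage one slot of the bilinear pairing is held fixed while the other converges in $L^2$, so no uniformity across both variables is needed. The only genuinely Gaussian ingredient is the equivalence of $L^2$ and $L^0$ (convergence-in-probability) topologies on the Gaussian subspace $H$, invoked in the first step; everything after that is elementary functional analysis. Since the statement is quoted from \cite{NazarovSodin2} (Lemma A.3 there), one could alternatively just cite it, but the proof sketched above is the natural self-contained route.
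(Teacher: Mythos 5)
Your argument is correct, and it is worth noting that the paper itself gives no proof of this proposition: it is stated as Lemma~A.3 of the cited Nazarov--Sodin paper and simply imported. Your route is in fact the standard one (and essentially the one taken in that reference): reduce everything to the fact that on the closed Gaussian subspace $H=\overline{\mathrm{span}}\{X^i(p)\}\subset L^2(\Omega,\P)$ convergence in probability coincides with convergence in $L^2$, so that the a.s.\ convergence of difference quotients (and of $\de_\a X(p_n)\to\de_\a X(p)$ via Lemma~\ref{Lemma:14} applied samplewise) upgrades to $L^2$ convergence, giving existence and continuity of the partial derivatives of $\gamma_X$; the two displayed identities then follow from bilinearity and continuity of the inner product, one derivative at a time. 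Two small points deserve to be made explicit if you write this up. First, the equivalence of the $L^2$ and in-probability topologies on $H$ should be justified by the variance argument rather than by the equivalence of $L^p$ norms alone: if $\xi_n\in H$ is Cauchy in probability, then $\xi_n-\xi_m$ is centered Gaussian tending to $0$ in probability, which forces its variance to tend to $0$ (otherwise $\P\{|\xi_n-\xi_m|>\e\}$ stays bounded below), so the sequence is $L^2$-Cauchy and its limit lies in the closed subspace $H$ and agrees a.s.\ with the samplewise limit. Second, this closedness of $H$ is what guarantees that $\de_\a X(p)$ itself belongs to $H$, which you need in order to iterate the argument for higher-order derivatives. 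With those two remarks supplied, the proof is complete and self-contained, which is a genuine addition relative to the paper's bare citation.
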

\begin{cor}\label{cor:covcrr}
Let $X\in \g rMk$, then $K_X\in \Cr {r,r}{M\times M}{k\times k}$.
\end{cor}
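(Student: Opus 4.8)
The plan is to deduce Corollary \ref{cor:covcrr} directly from Proposition \ref{Ltwocont}. The content of that proposition is that the $L^2(\Omega)^k$-valued map $\gamma_X$ is of class $\mathcal{C}^r$ and that, in any pair of coordinate charts $x$ on a neighborhood of $p$ and $y$ on a neighborhood of $q$, the mixed partial derivative $\partial_{(\alpha,\beta)}K_X(x,y)$ exists for all multi-indices with $|\alpha|,|\beta|\le r$ and equals the $L^2$ inner product $\langle \partial_\alpha\gamma_X(x),\partial_\beta\gamma_X(y)^T\rangle_{L^2(\Omega)}$. So the existence of all partials of $K_X$ of order up to $r$ in each of the two variables is already granted; what remains is to check that these partials are \emph{continuous} on $M\times M$, which is exactly what membership in $\Cr{r,r}{M\times M}{k\times k}$ requires (see the definition of the seminorms $\|\cdot\|_{Q,(r,r)}$ in Section \ref{sec:spasmo}).

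First I would fix a product chart, i.e. coordinate charts $x$ near $p_0$ and $y$ near $q_0$, and multi-indices $\alpha,\beta$ with $|\alpha|,|\beta|\le r$. By Proposition \ref{Ltwocont}, for $(x,y)$ ranging in this product chart we have the identity
\be
\partial_{(\alpha,\beta)}K_X(x,y)=\langle \partial_\alpha\gamma_X(x),\,\partial_\beta\gamma_X(y)^T\rangle_{L^2(\Omega)},
\ee
understood entrywise as a $k\times k$ matrix of $L^2$ pairings. Since $\gamma_X$ is $\mathcal{C}^r$ as a map into $L^2(\Omega)^k$, the maps $x\mapsto \partial_\alpha\gamma_X(x)$ and $y\mapsto\partial_\beta\gamma_X(y)$ are continuous from the chart domains into $L^2(\Omega)^k$. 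The inner product $L^2(\Omega)^k\times L^2(\Omega)^k\to \R^{k\times k}$, $(u,v)\mapsto \langle u, v^T\rangle$, is continuous (indeed it is bilinear and bounded, by Cauchy–Schwarz: $|\langle u_i,v_j\rangle|\le \|u_i\|_{L^2}\|v_j\|_{L^2}$). Hence $(x,y)\mapsto \partial_{(\alpha,\beta)}K_X(x,y)$ is a composition of continuous maps, therefore continuous on the product chart; since such product charts cover $M\times M$, the partial $\partial_{(\alpha,\beta)}K_X$ is continuous on all of $M\times M$. As this holds for every $\alpha,\beta$ with $|\alpha|,|\beta|\le r$, we conclude $K_X\in \Cr{r,r}{M\times M}{k\times k}$.

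There is no serious obstacle here: the corollary is essentially a restatement of Proposition \ref{Ltwocont} once one observes that the continuity of $\gamma_X$ together with continuity of the $L^2$ pairing upgrades ``the mixed partials exist and are given by an $L^2$ formula'' to ``the mixed partials are continuous''. The only minor point to be careful about is bookkeeping with the matrix/vector indices (the transpose in $v^T$ just reshuffles which component of $\gamma_X$ pairs with which), and the fact that the notion of $\mathcal{C}^{r,r}$ is chart-independent, so it suffices to verify the continuity of the mixed partials in an atlas of product charts — which is immediate since, by definition, membership in $\Cr{r,r}{M\times M}{k\times k}$ is tested precisely on embeddings of compact sets, and continuity is a local property.
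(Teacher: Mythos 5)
Your argument is correct and is exactly the route the paper intends: Corollary \ref{cor:covcrr} is stated as an immediate consequence of Proposition \ref{Ltwocont}, and your observation that the $\mathcal{C}^r$ regularity of $\gamma_X$ into $L^2(\Omega)^k$ combined with the continuity (via Cauchy--Schwarz) of the $L^2$ pairing yields continuity of all mixed partials $\partial_{(\alpha,\beta)}K_X$ with $|\alpha|,|\beta|\le r$ is precisely the intended justification. No issues.
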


\subsection{A Gaussian inequality}
The scope of this section is to prove Theorem \ref{Gaussinequality}, which contains a key technical inequality. Although such inequality can be seen as a consequence of Kolmogorov's theorem for $\mathcal{C}^{k,k}$ kernels, as discussed in \cite[Sec. A.9]{NazarovSodin2}, we report here a simpler proof. 
In fact, the result follows from a general inequality valid for all GRFs, not necessarily continuous. 

Given a GRF $X\colon M\to \R$, we define for all $\e>0$ the quantity $N(\e)$, to be the minimum number of $L^2$-balls of radius $\e$ needed to cover $\gamma_X(M)$. This number is always finite if $\gamma_X(M)$ is relatively compact in $L^2$. We will need the following Theorem from \cite{AdlerTaylor}.
\begin{thm}[Theorem 1.3.3 from \cite{AdlerTaylor}]\label{Adlerineq}
Let $\gamma_X(M)$ be compact in $L^2\Prob$. Let $\Delta_X=\text{diam}(\gamma_X(M))$. There exists a universal constant $C>0$ such that
\be 
\E\left\{\sup_{x\in M}X(t)\right\}\le C\int_0^{\Delta_X}\sqrt{\ln N(\e)}d\e.
\ee

\end{thm}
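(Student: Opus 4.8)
This is a classical result (Dudley's entropy integral), so the plan is to reproduce the standard chaining argument adapted to the $L^2$ geometry of $\gamma_X(M)$. The idea is to approximate the index set by a nested sequence of finite $\e$-nets at geometrically decreasing scales and to control the increments of the Gaussian process along the resulting chain using the Gaussian tail bound.

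First I would reduce to the case where $M$ is finite, or rather where we take the supremum over a finite subset $T\subset M$: since $\E\sup_{x\in M}X(x)=\sup\{\E\sup_{x\in T}X(x):T\subset M\text{ finite}\}$ by monotone convergence (the supremum over all finite subsets is attained in the limit), it suffices to bound $\E\sup_{x\in T}X(x)$ by the entropy integral with a constant independent of $T$. Next, set $\delta_n=\Delta_X 2^{-n}$ for $n\ge 0$, and for each $n$ choose a minimal $\delta_n$-net $\pi_n(T)\subset \gamma_X(T)$ in $L^2$, of cardinality at most $N(\delta_n)$; we may take $\pi_0$ to be a single point. For a fixed base point $x_0$ one writes the telescoping identity $X(x)-X(x_0)=\sum_{n\ge 1}\bigl(X(\pi_n x)-X(\pi_{n-1}x)\bigr)$, valid in $L^2$ and, after truncating, as an honest finite sum for each $x\in T$ since $T$ is finite. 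The key point is that $\|\gamma_X(\pi_n x)-\gamma_X(\pi_{n-1}x)\|_{L^2}\le \delta_n+\delta_{n-1}\le 3\delta_n$, so each increment is a centered Gaussian with standard deviation $\le 3\delta_n$, and the number of distinct pairs $(\pi_{n-1}x,\pi_n x)$ is at most $N(\delta_{n-1})N(\delta_n)\le N(\delta_n)^2$.

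The main step is then the standard maximal inequality: for a finite family of centered Gaussians $g_1,\dots,g_\ell$ with $\E g_i^2\le \sigma^2$ one has $\E\max_i g_i\le \sigma\sqrt{2\ln \ell}$. Applying this at each level $n$ to the increments, and using $\E\sup_{x\in T}X(x)=\E\sup_{x\in T}(X(x)-X(x_0))\le \sum_{n\ge1}\E\max(\text{increments at level }n)$, yields
\be
\E\sup_{x\in T}X(x)\le \sum_{n\ge 1} 3\delta_n\sqrt{2\ln N(\delta_n)^2}\le C'\sum_{n\ge 1}\delta_n\sqrt{\ln N(\delta_n)}.
\ee
Finally I would compare this series with the integral $\int_0^{\Delta_X}\sqrt{\ln N(\e)}\,d\e$: since $N(\e)$ is nonincreasing in $\e$, on the interval $[\delta_{n+1},\delta_n]$ we have $\sqrt{\ln N(\e)}\ge \sqrt{\ln N(\delta_n)}$, and the length of this interval is $\delta_n-\delta_{n+1}=\tfrac12\delta_n$, so $\delta_n\sqrt{\ln N(\delta_n)}\le 2\int_{\delta_{n+1}}^{\delta_n}\sqrt{\ln N(\e)}\,d\e$; summing over $n$ gives the claimed bound with a universal constant $C$. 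The only genuine subtlety — and the step I would be most careful about — is the truncation/limiting argument that makes the chaining rigorous: one works with a fixed finite $T$ so that all sums are finite, checks that the bound is uniform in $T$, and only then passes to the supremum over $M$; one should also note the trivial case $\Delta_X=0$, where the statement is vacuous.
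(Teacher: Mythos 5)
Your chaining argument is correct, but note that the paper does not prove this statement at all: it is quoted verbatim as Theorem 1.3.3 of Adler--Taylor and used as a black box (the authors only derive consequences of it, e.g.\ Lemma \ref{zeroineqlemma}). What you have written is the standard proof of Dudley's entropy bound, and the details check out: the reduction to finite $T$ handles the measurability of the supremum and legitimizes the telescoping sum; the increment bound $\|\gamma_X(\pi_n x)-\gamma_X(\pi_{n-1}x)\|_{L^2}\le \delta_n+\delta_{n-1}=3\delta_n$ is right for the dyadic scales $\delta_n=\Delta_X 2^{-n}$; the Gaussian maximal inequality $\E\max_i g_i\le\sigma\sqrt{2\ln\ell}$ applied to the at most $N(\delta_n)^2$ increments per level gives the series $\sum_n\delta_n\sqrt{\ln N(\delta_n)}$; and the comparison of that series with $\int_0^{\Delta_X}\sqrt{\ln N(\e)}\,d\e$ via monotonicity of $N$ is the standard Riemann-sum estimate. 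Two tiny points worth keeping in mind if you were to write this out in full: you use that the fields are centered when replacing $\E\sup_x X(x)$ by $\E\sup_x\bigl(X(x)-X(x_0)\bigr)$, which matches the paper's convention that all GRFs are centered; and the compactness hypothesis on $\gamma_X(M)$ is exactly what guarantees $N(\e)<\infty$ and $\Delta_X<\infty$, so the entropy integral is well defined. In short: no gap, but you are supplying a proof the paper deliberately outsources to the literature.
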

As a corollary, in our setting we can derive the following.
\begin{lemma}\label{zeroineqlemma}
Let $X\in \g {1}M{}$ and consider an embedding $Q: D \hookrightarrow M$ of a compact disk $D\subset \R^m$. There is a constant $C_Q>0$ such that
\be 
\E\{\|X\|_{Q,0}\}\le C_Q \sqrt{\|K_X\|_{Q\times Q,1 }}.
\ee
\end{lemma}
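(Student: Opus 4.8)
\textbf{Proof plan for Lemma \ref{zeroineqlemma}.}

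The plan is to reduce the statement to a direct application of Theorem \ref{Adlerineq} (the Dudley-type entropy bound) together with the symmetrization-type trick that controls $\E\{\sup|X|\}$ by $\E\{\sup X\}$ plus the value at a single point. First I would set $Y = X\circ Q$, a $\mathcal{C}^1$ Gaussian random function on the compact disk $D\subset\R^m$; by Proposition \ref{Ltwocont} its associated map $\gamma_Y\colon D\to L^2\Prob$ is $\mathcal{C}^1$, hence Lipschitz on the compact convex set $D$ with Lipschitz constant controlled by $\sup_{x\in D}\|\de_\a\gamma_Y(x)\|_{L^2}$ over $|\a|=1$, which by the second identity of Proposition \ref{Ltwocont} is bounded by $\sqrt{\|K_X\|_{Q\times Q,1}}$ (the sup of the first-order diagonal derivatives of $K_X$ pulled back by $Q\times Q$). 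In particular $\gamma_Y(D)$ is a compact subset of $L^2\Prob$, so $N(\e)$ is finite for all $\e>0$ and Theorem \ref{Adlerineq} applies to $Y$.

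Next I would estimate the two geometric quantities appearing in Theorem \ref{Adlerineq}. Writing $L \doteq C_m'\sqrt{\|K_X\|_{Q\times Q,1}}$ for the Lipschitz constant of $\gamma_Y$ (with $C_m'$ depending only on $m$ through the number of first-order multi-indices), we get $\Delta_Y = \mathrm{diam}(\gamma_Y(D)) \le L\cdot\mathrm{diam}(D)$, and since $\gamma_Y(D)$ is the Lipschitz image of $D\subset\R^m$, a ball of radius $\e/L$ in $D$ maps into a ball of radius $\e$ in $L^2$; hence $N(\e) \le N_D(\e/L)$, where $N_D$ is the covering number of $D$ in $\R^m$, and the standard volume bound gives $N_D(\e/L)\le (C_D L/\e)^m$ for $\e \le L\,\mathrm{diam}(D)$. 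Plugging this into the entropy integral,
\be
\int_0^{\Delta_Y}\sqrt{\ln N(\e)}\,d\e \le \int_0^{L\cdot\mathrm{diam}(D)}\sqrt{m\ln(C_D L/\e)}\,d\e = L\cdot\mathrm{diam}(D)\int_0^{1}\sqrt{m\ln(C_D'/s)}\,d\e
\ee
after the substitution $\e = L\,\mathrm{diam}(D)\,s$, and the last integral is a finite constant $c_m$ depending only on $m$ and $\mathrm{diam}(D)$ (equivalently on $D$). Thus Theorem \ref{Adlerineq} yields $\E\{\sup_{x\in D} Y(x)\} \le C\, c_m\, L = C_Q'\sqrt{\|K_X\|_{Q\times Q,1}}$.

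Finally I would pass from $\E\{\sup Y\}$ to $\E\{\sup|Y|\} = \E\{\|X\|_{Q,0}\}$. Fix any $x_0\in\mathrm{int}(D)$. By symmetry of the centered Gaussian field, $\E\{\sup_x(-Y(x))\} = \E\{\sup_x Y(x)\}$, and
\be
\sup_{x}|Y(x)| \le \max\Big(\sup_x Y(x),\ \sup_x(-Y(x))\Big) \le \sup_x Y(x) + \sup_x(-Y(x)) + |Y(x_0)|,
\ee
wait — more simply, $\sup_x|Y(x)| \le \sup_x(Y(x) - Y(x_0)) + \sup_x(Y(x_0) - Y(x)) + |Y(x_0)|$; taking expectations and using that $\sup_x(Y(x)-Y(x_0))$ and $\sup_x(Y(x_0)-Y(x))$ have the same (finite) mean, bounded by $2\,\E\{\sup_x Y(x)\} + 2|\E Y(x_0)\text{-type terms}|$, together with $\E|Y(x_0)| \le \sqrt{\E Y(x_0)^2} = \sqrt{K_X(Q(x_0),Q(x_0))} \le \sqrt{\|K_X\|_{Q\times Q,1}}$, gives $\E\{\|X\|_{Q,0}\} \le C_Q\sqrt{\|K_X\|_{Q\times Q,1}}$ for a suitable $C_Q>0$.

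The main obstacle I expect is purely bookkeeping: making the passage from $\sup Y$ to $\sup|Y|$ clean (the cleanest route is to note $\sup|Y| \le |Y(x_0)| + \sup_x|Y(x)-Y(x_0)|$ and apply Theorem \ref{Adlerineq} to the centered-at-$x_0$ field $Y(\cdot)-Y(x_0)$, whose $L^2$-diameter and covering numbers are controlled exactly as above), and checking that all constants genuinely depend only on $Q$ (through $D$ and the embedding) and not on $X$ — which they do, since every estimate above factored through the single scalar $\|K_X\|_{Q\times Q,1}$ and metric data of $D$.
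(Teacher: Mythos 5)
Your overall strategy (the entropy bound of Theorem \ref{Adlerineq}, covering numbers of $D$, passage from $\sup Y$ to $\sup|Y|$) is the same as the paper's, but there is a genuine gap at the one step that carries all the content of the lemma: the claim that the Lipschitz constant of $\gamma_Y$ is controlled by $\sqrt{\|K_X\|_{Q\times Q,1}}$. By Proposition \ref{Ltwocont},
\be
\|\de_j\gamma_Y(x)\|_{L^2}^2=\E\{(\de_jY(x))^2\}=\de_{(e_j,e_j)}K(x,x)=\frac{\de^2 K}{\de x_j\de y_j}(x,x),
\ee
which is a \emph{second}-order derivative of $K$ on $D\times D$, whereas $\|K_X\|_{Q\times Q,1}$ (the seminorm \eqref{eq:seminormQR} applied to the product embedding) only involves derivatives of total order at most $1$. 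So your Lipschitz constant $L$ is bounded by $\sqrt{\|K_X\|_{Q\times Q,(1,1)}}$, not by $\sqrt{\|K_X\|_{Q\times Q,1}}$, and as written your argument proves a strictly weaker inequality. (That weaker inequality would still suffice for the downstream application in Theorem \ref{Gaussinequality}, since there the lemma is applied to $\de_\a X^i$ with $|\a|\le r-1$ and the relevant derivatives of $K$ are still dominated by $\|K_X\|_{Q\times Q,(r,r)}$ --- but it is not the stated lemma.)

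The paper's proof avoids the Lipschitz property entirely. It writes
\be
\|X(p)-X(q)\|_{L^2}^2=K(p,p)+K(q,q)-2K(p,q)\le|K(p,p)-K(q,p)|+|K(q,q)-K(p,q)|\le 2\sup_{x,y}\Big|\frac{\de K}{\de x}(x,y)\Big|\,|p-q|,
\ee
exploiting the structure of the $L^2$ increment as a second difference of $K$ to obtain a H\"older-$\tfrac12$ modulus $\|X(p)-X(q)\|_{L^2}\le\Lambda|p-q|^{1/2}$ with $\Lambda^2=2\|K\|_{Q\times Q,1}$, i.e.\ using only one total derivative of $K$. The covering-number and entropy-integral computation then goes through with $N(\e)\le\tilde N(\e^2/\Lambda^2)$ in place of your $N(\e)\le N_D(\e/L)$, the integral again being finite and linear in $\Lambda$. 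If you replace your Lipschitz step by this H\"older estimate, the rest of your plan (including the bookkeeping for $\E\sup|Y|$, which the paper simply absorbs into a factor $2$ in front of the entropy integral) closes the proof of the lemma as stated.
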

\begin{proof}
It is not restrictive to assume that $M=D$ and $Q=\textrm{id}$.
Notice that since the map $\gamma_X$ is continuous, by Proposition \ref{Ltwocont}, it follows that $\gamma_X(D)$ is compact in $L^2\Prob$, so that we can apply Theorem \ref{Adlerineq} to get that
\be
\E\{\|X\|_{D, 0}\}\le 2C\int_0^{\Delta_X}\sqrt{\ln N(\e)}d\e.
\ee
Moreover, for any $q,p\in D$, we have that 
\be 
\begin{aligned}
\|X(p)-X(q)\|_{L^2}^2 &=K(p,p)+K(q,q)-2K(p,q) \\
&\le \left|K(p,p)-K(q,p)\right|+\left|K(q,q)-K(p,q)\right|\\
&\le
2 \sup_{x,y\in D} \left|\frac{\de K}{\de x}(x,y)\right||p-q|,
\end{aligned}
\ee
where $K=K_X$. Thus, denoting $\Lambda^2=2\|K\|_{Q\times Q, 1}$, we obtain that 
\be\label{ineq}
\|X(p)-X(q)\|_{L^2}\le \Lambda |q-p|^{\frac12}.
\ee
Let now $\tilde{N}(\rho)$ be the minimum number of standard balls in $\R^m$ with radius $\rho$, required to cover $D$. A consequence of \eqref{ineq} is that every ball of radius $\rho$ in $D$ is contained in the preimage via $\gamma_X$ of a ball of radius $\Lambda\rho^{\frac12}$ in $L^2$, therefore $N(\e)\le \tilde{N}(\frac{\e^2}{\Lambda^2})$. Besides, $\Delta_X\le \Lambda\sqrt{R}$, where $R$ is the diameter of $D$, so that
\be 
\E\{\|X\|_{D,0}\} \le 2C\int_0^{\Lambda \sqrt{R}}\sqrt{\ln \tilde{N}\left(\frac{\e^2}{\Lambda^2}\right)}d\e 
= 
2C\Lambda\int_0^{\sqrt{R}}\sqrt{\ln \tilde{N}\left(s^2\right)}ds.
\ee
Now, since $D\subset \R^m$, there is a constant $c_m$ such that $\tilde{N}(\rho)\le c_m\left(\frac{R}{\rho}\right)^m$, therefore
\[
I(R)=\int_0^{\sqrt{R}}\sqrt{\ln \tilde{N}\left(s^2\right)}ds\le \int_0^{\sqrt{R}}\sqrt{\ln c_m\left(\frac{R^2}{s^2}\right)^{m} }ds<\infty.
\]
We conclude that $\E\{\|X\|_{D,0}\}\le 2\sqrt{2}\cdot C\cdot I(R)\sqrt{\|K_X\|_{Q\times Q,1 }}$.
\end{proof}
We are now able to prove the required Gaussian inequality.
\begin{thm}\label{Gaussinequality}
Let $X\in \g {r}Mk$ and consider an embedding $Q: D \hookrightarrow M$ of a compact disk $D\subset \R^m$. Then
\[
\E\{\|X\|_{Q, {r-1}}\}\le C \sqrt{\|K_X\|_{Q\times  Q, (r,r)}},
\]
Where $C$ is a constant depending only on $Q$, $r$ and $k$.
\end{thm}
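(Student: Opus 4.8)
The plan is to reduce Theorem~\ref{Gaussinequality} to the case $r=1$, $k=1$, which is Lemma~\ref{zeroineqlemma}, by differentiating the field. Exactly as in the proof of that lemma, I would first reduce to $M=D$ and $Q=\mathrm{id}$: replacing $X$ by $X\circ Q$ produces a $\mathcal{C}^r$ GRF on the compact disk $D$ whose $(r-1)$-seminorm is $\|X\|_{Q,r-1}$ and whose covariance function is $K_X\circ(Q\times Q)$, with $(r,r)$-seminorm equal to $\|K_X\|_{Q\times Q,(r,r)}$. Write $X=(X^1,\dots,X^k)$ and let $x^1,\dots,x^m$ be standard coordinates on $D$.

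Next I would trade derivatives of $X$ for the order-zero seminorm of derivatives. For every multi-index $\alpha\in\N^m$ with $|\alpha|\le r-1$, iterating Lemma~\ref{vXisGRF} along the coordinate vector fields shows $\de_\alpha X^j\in\g 1D{}$ for each $j$. Since $\|X\|_{\mathrm{id},r-1}=\max_{|\alpha|\le r-1}\|\de_\alpha X\|_{\mathrm{id},0}$ and $|\de_\alpha X(x)|\le\sum_{j}|\de_\alpha X^j(x)|$, taking expectations gives
\[
\E\{\|X\|_{\mathrm{id},r-1}\}\le\sum_{|\alpha|\le r-1}\ \sum_{j=1}^{k}\E\{\|\de_\alpha X^j\|_{\mathrm{id},0}\}.
\]
To each of the finitely many summands I apply Lemma~\ref{zeroineqlemma}, obtaining a constant $C'$ depending only on $D$ with $\E\{\|\de_\alpha X^j\|_{\mathrm{id},0}\}\le C'\sqrt{\|K_{\de_\alpha X^j}\|_{\mathrm{id}\times\mathrm{id},1}}$.

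Finally I would bound each $\|K_{\de_\alpha X^j}\|_{\mathrm{id}\times\mathrm{id},1}$ by $\|K_X\|_{\mathrm{id}\times\mathrm{id},(r,r)}$. By Proposition~\ref{Ltwocont}, $K_{\de_\alpha X^j}(x,y)=\E\{\de_\alpha X^j(x)\,\de_\alpha X^j(y)\}=\de_{(\alpha,\alpha)}K_{X^j}(x,y)$, which is continuous by Corollary~\ref{cor:covcrr}. Hence any partial derivative of $K_{\de_\alpha X^j}$ of total order $\le 1$ is a partial derivative of $K_{X^j}$ of order $\le|\alpha|+1\le r$ in the first $m$ variables and $\le|\alpha|+1\le r$ in the last $m$; so $\|K_{\de_\alpha X^j}\|_{\mathrm{id}\times\mathrm{id},1}\le\|K_{X^j}\|_{\mathrm{id}\times\mathrm{id},(r,r)}\le\|K_X\|_{\mathrm{id}\times\mathrm{id},(r,r)}$, the last step because $K_{X^j}=(K_X)^{jj}$ is a component of $K_X$. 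Summing over $\alpha$ and $j$ then gives the estimate with $C=k\cdot\#\{\alpha\in\N^m:|\alpha|\le r-1\}\cdot C'$, which depends only on $Q$ (which records $m=\dim M$), $r$ and $k$.

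I do not expect a genuine obstacle: the analytic heart — the metric-entropy (Dudley-type) bound of Theorem~\ref{Adlerineq} — is already packaged inside Lemma~\ref{zeroineqlemma}, and everything else is bookkeeping. The one point deserving care is the order counting: differentiating $X$ up to order $r-1$ and then invoking Lemma~\ref{zeroineqlemma} (which itself costs one further derivative of the covariance) must never push a derivative of $K_X$ past order $r$ in either block of variables, which is precisely why the hypothesis is stated with the $(r,r)$-seminorm and why $K_X\in\mathcal{C}^{r,r}$ suffices.
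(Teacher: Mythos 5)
Your proposal is correct and follows essentially the same route as the paper's own proof: iterate Lemma \ref{vXisGRF} to see that each $\de_\a X^j$ is a $\mathcal{C}^1$ GRF, bound $\E\{\|X\|_{Q,r-1}\}$ by the sum of $\E\{\|\de_\a X^j\|_{Q,0}\}$, apply Lemma \ref{zeroineqlemma} to each term, and use the identity $K_{\de_\a X^j}=\de_{(\a,\a)}K_X^{jj}$ from Proposition \ref{Ltwocont} to absorb everything into the $(r,r)$-seminorm of $K_X$. The order-counting point you flag at the end is exactly the reason the paper states the hypothesis with the $(r,r)$-seminorm, so there is nothing to add.
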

\begin{proof}
A repeated application of Lemma \ref{vXisGRF} proves that $\de_\a X^i$ is Gaussian, so that we can use Lemma \ref{zeroineqlemma} as follows.
\be 
\begin{aligned}
\E\{\|X\|_{Q,r-1}\} &\le \sum_{|\a|< r, i\le k}\E\{\|\de_\a X^i\|_{Q,0}\} \\
&\le \sum_{|\a|< r, i\le k}C_Q \sqrt{\|K_{\de_\a X^i}\|_{Q\times Q,1}} \\
& =\sum_{|\a|< r, i\le k}C_Q \sqrt{\|\de_{(\a,\a)} K^{i,i}_{X}\|_{Q\times Q,1}} \\
&\le C(Q,r,k)\sqrt{\|K_{X}\|_{Q\times Q,(r,r)}}\ .
\end{aligned}
\ee
\end{proof}

%
%
%
%
%
%
%
%
%
%
%
%
%
%
%
\section{Proof of Theorem \ref{thm:1} and Theorem \ref{thm:2}}\label{sec:thm12}

\subsection{Proof that $\K^r$ is injective and continuous}We already noted that $K_X$ determines $[X]$, and this is equivalent to say that $\K^0$ is injective. It follows that $\K^r$ is injective for every $r$, since $\K^r$ is just the restriction of $\K^0$ to $\G(E^r)$. 

Let us prove continuity. Since both the domain and the codomain are metrizable topological spaces, it will be sufficient to prove sequential continuity. 
Let $\mu_n\nrw\mu\in \G(E^r)$. Let $X\in \mathcal{G}^r(M, \R^k)$ be a GRF such that $\mu=[X]$ and for every $n\in \N$ let $X_n\in \mathcal{G}^r(M, \R^k)$ be such that $\mu_n=[X_n].$  By Skorohod's representation theorem (see \cite[Theorem 6.7]{Billingsley}) we can assume that the $X_n$ are GRFs defined on a common probability space $\Prob$ and that $X_n\to X$ almost surely in the topological space $\Cr rMk$.

To prove $\mathcal{C}^{r,r}$ convergence of $K_n=K_{X_n}$ to $K=K_X$, it is sufficient (and necessary) to show that given coordinate charts $(x,y)$ on $M\times M$, a sequence $(x_n,y_n)\to (x_0, y_0)$, a couple of indices $|\a|,|\beta|\le r$ and two indices $i,j \in\{1,\dots,k\}$, then
\be\label{Kconvergeseq}
\de_{(\a,\beta)} K_n^{i,j}(x_n,y_n)\to \de_{(\a,\beta)} K^{i,j}(x_0,y_0).
\ee
Let $\gamma_n=\de_\a X_n^i(x_n)$ and $\xi_n=\de_\beta X_n^j(y_n)$. By Lemma \ref{vXisGRF}, these two random vectors are Gaussian; moreover $\gamma_n\to \gamma$ and $\xi_n\to \xi$ almost surely. It follows that the convergence holds also in $L^2\Prob$, so that
\be 
\E\{\gamma_n\xi_n\}\to \E\{\gamma\xi\},
\ee
which is exactly \eqref{Kconvergeseq}.

\subsection{Relative compactness}
As we will see with Theorem \ref{thm:counter}, the map $\K^r$ is not proper when $r$ is finite. However, we have the following partial result.
\begin{thm}\label{rproper}
Let $r\in \N$ and consider $[X_n]\in \G(E^{r+2})$ and let $\{Q_\ell\}_{\ell\in \N}$ be a countable family of embeddings $Q_\ell:\D^m\hookrightarrow M$, such that the family of open sets $\textrm{int}(Q_\ell(\D))$ is a covering of $M$ (so that condition \eqref{prodembballeq} holds). Then the following conditions are related by the implications: \ref{itm:Ir} $\implies$ \ref{itm:IIr} $\implies$ \ref{itm:IIIr}.
\begin{enumerate}[(I)]
\item\label{itm:Ir} $ \sup_{n}\|K_{X_n}\|_{Q_\ell\times Q_\ell, (r+2,r+2)} <\infty$, for very $\ell\in \N$.
\item \label{itm:IIr} 
$\sup_{n}\E\left\{\|X_n\|_{Q, r+1}\right\} <\infty$, for any embedding $Q\colon D\hookrightarrow M$ of a compact $D\subset \R^m$.
\item \label{itm:IIIr}
The sequence $\{[X_n]\}_{n\in \N}$ is relatively compact in $\G(E^r)$.
\end{enumerate}
\end{thm}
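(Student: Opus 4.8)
The plan is to establish the two implications separately, using the Gaussian inequality of Theorem \ref{Gaussinequality} for the first and a compactness criterion (Prokhorov-type / uniform tightness via the embedding \eqref{prodembballeq}) for the second.

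For \ref{itm:Ir} $\implies$ \ref{itm:IIr}: given an arbitrary embedding $Q\colon D\hookrightarrow M$ of a compact $D\subset\R^m$, I first reduce to the case of the coordinate disks $Q_\ell$. By compactness of $D$, finitely many of the open sets $\textrm{int}(Q_\ell(\D^m))$ cover $Q(D)$, and on overlaps the $\mathcal{C}^{r+1}$-seminorm $\|X_n\|_{Q,r+1}$ is controlled by a finite sum of the seminorms $\|X_n\|_{Q_\ell,r+1}$ up to a constant depending only on the (fixed) transition maps. Then Theorem \ref{Gaussinequality} applied to each $Q_\ell$ gives
\be
\E\{\|X_n\|_{Q_\ell,r+1}\}\le C_\ell\sqrt{\|K_{X_n}\|_{Q_\ell\times Q_\ell,(r+2,r+2)}},
\ee
and taking the supremum over $n$ and using \ref{itm:Ir} yields the bound in \ref{itm:IIr}.

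For \ref{itm:IIr} $\implies$ \ref{itm:IIIr}: I want to show that $\{[X_n]\}$ is relatively compact in $\mathscr{P}(E^r)$ and that the limit points are Gaussian; the latter is automatic since $\G(E^r)$ is closed in $\mathscr{P}(E^r)$ (by the proposition just before Section \ref{sec:thm12}). Since $E^r$ is Polish, by Prokhorov's theorem relative compactness is equivalent to uniform tightness, so it suffices to produce, for each $\e>0$, a compact set $\mathcal{K}_\e\subset E^r$ with $\P(X_n\in\mathcal{K}_\e)\ge 1-\e$ for all $n$. Using the topological embedding \eqref{prodembballeq}, $\{Q_\ell^*\}_\ell\colon E^r\hookrightarrow(\mathcal{C}^r(\D^m,\R^k))^L$, it is enough to control each coordinate: by the Arzelà–Ascoli theorem on $\mathcal{C}^r(\D^m,\R^k)$, sets of the form $\{f:\|f\|_{Q_\ell,r}\le R_\ell,\ \omega_{r}(f,\delta)\le\eta_\ell(\delta)\}$ (bounded $\mathcal{C}^r$-norm with equicontinuous top derivatives) are precompact. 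Since $\E\{\|X_n\|_{Q_\ell,r+1}\}$ is bounded uniformly in $n$ by \ref{itm:IIr}, Markov's inequality gives a uniform bound $\P(\|X_n\|_{Q_\ell,r+1}>R_\ell)<\e 2^{-\ell-1}$ for $R_\ell$ large; moreover a uniform $\mathcal{C}^{r+1}$-bound furnishes a uniform modulus of continuity for the $\mathcal{C}^r$-jets (the $(r+1)$-st derivatives bound the oscillation of the $r$-th ones), so that the corresponding Arzelà–Ascoli set captures $X_n$ restricted to $Q_\ell(\D^m)$ with probability $\ge 1-\e 2^{-\ell-1}$. Intersecting over $\ell$ and pulling back through the embedding produces the desired compact $\mathcal{K}_\e$, up to a correction of the countable sum of the $\e 2^{-\ell-1}$.

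The main obstacle I expect is the second implication — specifically, assembling a genuinely \emph{compact} subset of $E^r$ (in the weak Whitney topology) out of the countably many local Arzelà–Ascoli bounds and verifying that the pullback of $\prod_\ell \mathcal{K}_\ell$ under the embedding \eqref{prodembballeq} is closed in $E^r$, so that it is actually compact and not merely precompact. One must be careful that the embedding \eqref{prodembballeq} need not have closed image, so the compact set should be described intrinsically (bounded $\mathcal{C}^{r+1}$-seminorms on each $Q_\ell$ together with the induced equicontinuity of the $r$-jets is a closed condition, hence by Lemma \ref{Lemma:14} defines a closed, and by the above precompact, hence compact, subset of $E^r$). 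The first implication is comparatively routine, being essentially a localization argument plus a direct citation of Theorem \ref{Gaussinequality}.
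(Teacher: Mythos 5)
Your proposal is correct and follows essentially the same route as the paper: the first implication is the same localization to finitely many coordinate disks followed by Theorem \ref{Gaussinequality}, and the second is the same Prohorov/tightness argument, with the compact set built from uniform $\mathcal{C}^{r+1}$-seminorm bounds on each $Q_\ell$ (weighted by $2^{-\ell-1}\e$ via Markov's inequality) and compactness in $E^r$ checked coordinate-wise through the embedding \eqref{prodembballeq}. The closedness issue you flag is handled in the paper by the same observation you make, namely that sets of the form $\{f:\|f\|_{Q_\ell,r+1}\le A_\ell\ \forall\ell\}$ are compact in $\Cr rMk$ for finite $r$.
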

Since there is a continuous inclusion $\G (E^\infty)\subset \G (E^r)$ and the function $\K^r$ is continuous, for any $r\in\N$, we immediately obtain the following corollary for the case $r=\infty$.
\begin{thm}\label{infproper}
Let $[X_n]\in \G(E^{\infty})$ and $\{Q_\ell\}_{\ell\in \N}$ be a countable family of embeddings $Q_\ell:\D^m\hookrightarrow M$, such that the family of open sets $\textrm{int}(Q_\ell(\D))$ is a covering of $M$ (so that condition \eqref{prodembballeq} holds). Then the following conditions are equivalent.
\begin{enumerate}[(I)]
\item\label{itm:Iinf} $ \sup_{n}\|K_{X_n}\|_{Q_\ell\times Q_\ell, (r,r)} <\infty$, for very $r,\ell\in \N$.
\item \label{itm:IIinf} 
$\sup_{n}\E\left\{\|X_n\|_{Q, r}\right\} <\infty$, for any embedding $Q\colon D\hookrightarrow M$ of a compact $D\subset \R^m$ and every $r\in\N$.
\item \label{itm:IIIinf}
The sequence $\{[X_n]\}_{n\in \N}$ is relatively compact in $\G(E^\infty)$.
\end{enumerate}
\end{thm}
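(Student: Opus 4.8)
The plan is to reduce the $r=\infty$ statement to the finite-regularity Theorem \ref{rproper}, which is assumed. The key observation is that $\G(E^\infty)$ carries the initial topology with respect to the countable family of continuous inclusions $\iota_s\colon \G(E^\infty)\hookrightarrow \G(E^s)$, $s\in\N$: indeed, by \eqref{prodembballeq} and Lemma \ref{Lemma:14} the topology on $E^\infty$ is the projective limit of the topologies on $E^s$, and passing to probability measures preserves this (Corollary \ref{inducedmap} gives that each $\iota_s$ is a topological embedding, and a basis of neighborhoods in $\G(E^\infty)$ is obtained by intersecting preimages of neighborhoods in finitely many $\G(E^s)$). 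Consequently a subset of $\G(E^\infty)$ is relatively compact if and only if its image in every $\G(E^s)$ is relatively compact — here one uses that $\G(E^\infty)$ is Polish (Proposition \ref{prop:equiva}(3) applied to the Polish space $E^\infty$), so relative compactness is equivalent to sequential relative compactness, and a sequence converges in $\G(E^\infty)$ iff it converges in $\G(E^s)$ for all $s$ together with the limits being compatible, which they automatically are for Gaussian measures since the covariance function determines the measure (Theorem \ref{thm:1}).

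With this reduction in hand, I would argue the three equivalences as follows. The implication \ref{itm:Iinf} $\implies$ \ref{itm:IIinf} is immediate from the $r=\infty$ version of Theorem \ref{Gaussinequality}: for each fixed $r$, applying Theorem \ref{Gaussinequality} with $r$ replaced by $r+1$ (to control the $\mathcal{C}^r$-seminorm) gives $\E\{\|X_n\|_{Q,r}\}\le C\sqrt{\|K_{X_n}\|_{Q\times Q,(r+1,r+1)}}$, and an arbitrary embedding $Q$ of a compact set can be covered by finitely many of the $Q_\ell$ (or one absorbs it into the family), so the supremum in \ref{itm:Iinf} controls the supremum in \ref{itm:IIinf}. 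The implication \ref{itm:IIinf} $\implies$ \ref{itm:IIIinf} uses the reduction: condition \ref{itm:IIinf} for all $r$ implies in particular condition \ref{itm:IIr} of Theorem \ref{rproper} for every fixed $r\in\N$ (with $r+1$ in place of $r+1$, i.e. the hypothesis there is exactly a special case), hence $\{[X_n]\}$ is relatively compact in $\G(E^r)$ for every $r$, hence relatively compact in $\G(E^\infty)$ by the previous paragraph. Finally, \ref{itm:IIIinf} $\implies$ \ref{itm:Iinf} follows because $\K^r$ is continuous for every $r$ (first part of Theorem \ref{thm:1}): relative compactness of $\{[X_n]\}$ in $\G(E^\infty)$, hence in each $\G(E^{r})$, pushes forward under $\K^{r}$ to a relatively compact — in particular bounded — subset of $\mathcal{C}^{r,r}(M\times M,\R^{k\times k})$, and boundedness there is exactly the statement that $\sup_n\|K_{X_n}\|_{Q_\ell\times Q_\ell,(r,r)}<\infty$ for every $\ell$ and every $r$.

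The main obstacle I anticipate is making the reduction step fully rigorous, specifically the claim that relative compactness in $\G(E^\infty)$ is detected by the family of projections to $\G(E^s)$. One direction (relative compactness upstairs implies relative compactness of each image) is trivial by continuity of $\iota_s$; the delicate direction is the converse, where one must show that a sequence $[X_n]$ whose image in each $\G(E^s)$ has a convergent subsequence actually has a subsequence converging in $\G(E^\infty)$. A naive diagonal argument produces, for each $s$, a limit $\mu^{(s)}\in\G(E^s)$; one then needs these to be the restrictions of a single $\mu\in\G(E^\infty)$. This is where the Gaussian structure is essential: by Theorem \ref{thm:1} each $\mu^{(s)}$ is determined by its covariance $K^{(s)}\in\mathcal{C}^{s,s}$, the $K^{(s)}$ are mutually compatible (all equal to the $\mathcal{C}^{0,0}$ limit $K$ of $K_{X_n}$ along the diagonal subsequence, which is then automatically $\mathcal{C}^{s,s}$ for all $s$), and $K$ is a valid covariance function, so it defines $\mu\in\G(E^\infty)$ with $\iota_s(\mu)=\mu^{(s)}$; convergence $[X_{n_j}]\nrw\mu$ in $\G(E^\infty)$ then follows from convergence in each $\G(E^s)$ plus the fact that the $\G(E^s)$-topologies jointly generate the $\G(E^\infty)$-topology. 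I would isolate this as a short lemma before proving the theorem.
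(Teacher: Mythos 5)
Your implications \ref{itm:Iinf} $\implies$ \ref{itm:IIinf} and \ref{itm:IIIinf} $\implies$ \ref{itm:Iinf} are fine and match the paper's route (cover an arbitrary compact embedding by finitely many $Q_\ell$ and apply Theorem \ref{Gaussinequality}; use continuity of $\K^r$ for every finite $r$). The gap is in \ref{itm:IIinf} $\implies$ \ref{itm:IIIinf}, and it sits exactly where you suspected. Your reduction rests on the claim that each $\iota_s\colon \G(E^\infty)\hookrightarrow\G(E^s)$ is a topological embedding via Corollary \ref{inducedmap}; but that corollary requires the underlying map $E^\infty\hookrightarrow E^s$ to be a topological embedding, which it is not (the $\mathcal{C}^\infty$ topology is strictly finer than the restricted $\mathcal{C}^s$ topology). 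In fact $\iota_s$ fails to be an embedding even on Gaussian measures: take $X_n=\xi f_n$ with $\xi\sim N(0,1)$ and $f_n\to 0$ in $E^s$ but $\|f_n\|_{Q,s+1}\to\infty$; then $X_n\to 0$ almost surely in $E^s$, so $[X_n]\nrw\delta_0$ in $\mathscr{P}(E^s)$, while $[X_n]\centernot{\nrw}\delta_0$ in $\mathscr{P}(E^\infty)$ by Portmanteau applied to the $E^\infty$-open set $\{\|f\|_{Q,s+1}<1\}$. Consequently your assertion that the $\G(E^s)$-topologies ``jointly generate'' the $\G(E^\infty)$-topology does not follow from the projective-limit structure of $E^\infty$ together with Corollary \ref{inducedmap}. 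The assertion is in fact true, but its proof is essentially the content of the implication you are trying to establish, so as written the argument is circular; your diagonal construction of the candidate limit $\mu$ from the compatible $\mu^{(s)}$ correctly identifies the limit but does not yield convergence in $\mathscr{P}(E^\infty)$.

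The missing ingredient is tightness in $E^\infty$, which is how the paper closes this step: condition \ref{itm:IIinf} gives finite constants $A_\ell^r=\sup_n\E\{\|X_n\|_{Q_\ell,r}\}$, and Markov's inequality, exactly as in the proof of Theorem \ref{rproper} but summing over both $\ell$ and $r$, shows that for every $\e>0$ the measures $[X_n]$ give mass at least $1-\e$ to a set of the form $\mathscr{A}^\infty=\{f\colon \|f\|_{Q_\ell,r}\le \tfrac{2^{\ell+r+1}}{\e}A_\ell^r\ \forall r,\ell\}$, which is compact in $\Cr\infty Mk$ (this is the characterization of compact sets recalled just before the proof of Theorem \ref{rproper}); Prohorov's theorem on the Polish space $E^\infty$ then gives relative compactness in $\mathscr{P}(E^\infty)$, and closedness of $\G(E^\infty)$ finishes. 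Alternatively, you can keep your reduction but prove it honestly: relative compactness in each $\mathscr{P}(E^s)$ yields, by the converse of Prohorov's theorem, compact sets $K_s\subset E^s$ with $[X_n](K_s)\ge 1-\e 2^{-s-1}$ for all $n$, and $\bigcap_s K_s$ is compact in $E^\infty$ by a diagonal argument. Either way, the tightness step cannot be bypassed and must appear explicitly in your ``short lemma''.
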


Before proving this Theorem, recall that $\Cr rMk$ has the product topology with respect to the countable family of maps $\{Q_\ell^*\}_{\ell\in \N}$, defined as in \eqref{prodembballeq}.  It follows that a subset $\mathscr{A}\subset \Cr rMk$ is relatively compact if and only if $Q_{\ell}^*\mathscr{A}\subset \Cr r{\D^m}k$ is relatively compact for all $\ell$. In particular, if $r<\infty$, given constants $A_\ell >0$, the set
\be 
\mathscr{A}^r=\left\{f\in \Cr rMk \colon \|f\|_{Q_\ell,r+1}\le A_\ell \ \forall \ell\right\}
\ee
is compact in $\Cr rMk$. Similarly, given $A_\ell^r>0$ for all $r, \ell\in \N$, the set 
\be 
\mathscr{A^\infty}=\left\{f\in \Cr \infty Mk \colon \|f\|_{Q_\ell,r}\le A_\ell^r \ \forall r,\ell\right\}
\ee
is compact in $\Cr \infty Mk$. An important thing to note here is that every compact set in $\Cr \infty Mk$ is contained in a set of the form $\mathscr{A}^\infty$, while the analogous fact is not true when $r$ is finite.
\begin{proof}[Proof of theorem \ref{rproper}]
Let $Q\colon D\hookrightarrow M$ be the embedding of a compact subset $D\subset \R^m$. Then we can cover $D$ with a finite family of disks $D_1,\dots,D_N$ such that $Q(D_i)\subset \text{int}(Q_{\ell_i}(\D^m))$ for some $\ell_1,\dots,\ell_N$. It follows that there exists a constant $c>0$ such that $\|f\|_{Q,r+1}\le c\sum_{i=1}^N\|f\|_{Q_{\ell_i},r+1}$ for all $i=1,\dots, N$ and for all $f\in \Cr {r+1}Mk$.

 By applying Theorem \ref{Gaussinequality} to each $Q_\ell$, we get the inequality 
\be 
\begin{aligned}
\sup_n\E\left\{ \|X_n\|_{Q,r+1} \right\}
&\le c\sum_{i=1}^N\sup_n\E\left\{ \|X_n\|_{Q_{\ell_i},r+1} \right\}
\\
&\le c \sum_{i=0}^N C(Q_{\ell_i},r+1,k) \sqrt{\sup_{n}\|K_{X_n}\|_{Q_\ell\times Q_\ell,(r+2,r+2)}}
\\
&<+\infty.
\end{aligned}
\ee
This proves the implication \ref{itm:Ir} $\implies$ \ref{itm:IIr}.

By Prohorov's Theorem (see \cite[Theorem 5.2]{Billingsley}), to prove the second implication, it is sufficient to show that $\{[X_n]\}_n$ is tight in $\G(E^{r})$, i.e. that for every $\e>0$ there is a compact set $\mathscr{A}\subset E^{r}$, such that $\P(
X_n\in\mathscr{A})> 1-\e$ for any $n\in \N$.

Fix $\e>0$. By \ref{itm:IIr}, the number $A_\ell= {\sup_{n}\E\left\{\|X_n\|_{Q_\ell,r+1}\right\}}$  is finite for each $\ell\in\N$.
Thus, we can consider the compact subset $\mathscr{A}\subset \Cr rMk$ defined as follows,
\be 
\mathscr{A}=\left\{f\in \Cr r Mk \colon \|f\|_{Q_\ell,r+1}\le \frac{2^{\ell+1}}{\e}A_\ell, \ \forall \ell\in \N\right\}.
\ee
By subadditivity and Markov's inequality we have that for all $n\in \N$:
\be 
\begin{aligned}
\P\{X_n\notin \mathscr{A}\} & \le \sum_{\ell\in \N}\P\left\{\|X_n\|_{Q_\ell,r+1}>\frac{2^{\ell+1}}{\e}A_\ell\right\} \\
& \le \sum_{\ell\in \N} \frac{\e}{2^{\ell+1}}\cdot \frac{\E\{\|X_n\|_{Q_\ell,r+1}\}}{A_\ell} \\
& \le \sum_{\ell\in\N}2^{-(\ell+1)}\e=\e.
\end{aligned}
\ee
We conclude that $\{[X_n]\}_n$ is tight.
\end{proof}

%

\subsection{Proof that $\K^\infty$ is a closed topological embedding} We already know that $\K^\infty$ is injective and continuous. To prove that it is a closed topological embedding it is sufficient to show that $\K^\infty$ is proper: both $\G(E^{\infty})\subset \mathscr{P}(E^\infty)$ and $E^\infty$ are metrizable spaces, and a proper map between metrizable spaces is closed. 

Let $\mathscr{A}\subset \Cr \infty {M\times M}{k\times k}$ be a compact set; then for any $Q: D\hookrightarrow M$ embedding of a compact subset $D\subset \R^m$ and for every $r\in \N$, it holds
\be 
\sup_{K\in \mathscr{A}}\|K\|_{Q\times Q,r }<\infty .
\ee
Therefore Theorem \ref{infproper} implies that the closed subset $(\K^\infty)^{-1}(\mathscr{A})\subset\G(E^\infty)$ is also relatively compact, hence it is compact.
\subsection{Proof of Theorem \ref{thm:2}}By Theorem \ref{thm:1}, if $K_d\xrightarrow{\mathcal{C}^\infty}K$ then $\mu_d\nrw\mu$. Observe also that, by definition for every $A\subset E^\infty$:
\be \P(X\in A)=\mu(A)\quad \textrm{and}\quad \P(X_d\in A)=\mu_d(A).\ee
Consequently \eqref{eq:limitprob} follows from Portmanteau's theorem (see \cite[Theorem 2.1]{Billingsley}).
\subsection{Addendum: a ``counter-theorem''}
It is possible to improve Theorem \ref{Gaussinequality} in order to control $\E\{\|X\|_{Q,r}\}$ with a $(r+\a,r+\a)$ Holder norm of the covariance function, if the latter is finite for some $\a\in (0,1)$ (see \cite[Sec. A.9]{NazarovSodin2}). But there is no way to get such an estimate with $\a=0$, as the following example shows.
\begin{example}\label{escempio}
Let $D\subset \R^{m}$ compact with non empty interior. We now construct a sequence of smooth GRFs $X_n\in \g 0 D{}$, with $\|K_{X_n}\|_{D,0}\to 0$, such that
\be
\liminf_{n\to \infty}\E\{\|X_n\|_{D, 0}\}\ge 1 .
\ee

Let $I^{(n)}_1,\dots, I^{(n)}_{n^2}$ be disjoint open sets in $D$ (their size doesn't matter), containing points $x^{(n)}_1,\dots, x^{(n)}_{n^2}$. Let $\f^{(n)}_1,\dots, \f^{(n)}_{n^2}$ be smooth functions $\varphi_i^{(n)}:D\to [0,1]$ such that $\f^{(n)}_i$ is supported in $I^{(n)}_i$ and $\f^{(n)}_i(x^{(n)}_i)=1$ (see Figure \ref{fig:compact}).
Let $\gamma_i$ be a countable family of independent standard Gaussian random variables.
Let $a_n\in \R$ be the real number such that $\P\{|\gamma|>a_n\}=\frac{1}{n}$, for any $\gamma\sim N(0,1)$, hence $a_n \to +\infty$.
Define
\be 
X_n=\frac{1}{a_n}\sum_{i=1}^{n^2}\gamma_i \f^{(n)}_i \in \g 0D{}.
\ee
Then $K_{X_n}(x,y)=\frac{1}{a_n^2}\f^{(n)}_i(x)\f^{(n)}_j(y)$ for some $i=i_x,j=j_x$, thus $\|K_{X_n}\|_{D,0}\to 0$.

We can now estimate the probability that the $\mathcal{C}^0$-norm of $X_n$ is small by
\be\label{Pmisfacteq}
\begin{aligned}
\P\{\|X_n\|_{D,0}< 1\} &\le \P\left\{\max_{i=1,\dots, n^2}|X_n(x^{(n)}_i)|< 1\right\} \\
&= \P\{|\gamma|< a_n\}^{n^2} \\
&= \left(1-\frac 1n\right)^{n^2} \xrightarrow[{n\to \infty}]{} 0.
\end{aligned}
\ee
Consequently, by Markov's inequality
\be \label{Emisfacteq}
\liminf_{n\to \infty}\E\left\{\|X_n\|_{D,0}\right\}\ge \liminf_{n\to \infty}\P\left\{\|X_n\|_{D,0}\ge 1\right\}=1.
\ee
\begin{figure}\begin{center}
\includegraphics[width=0.7\textwidth]{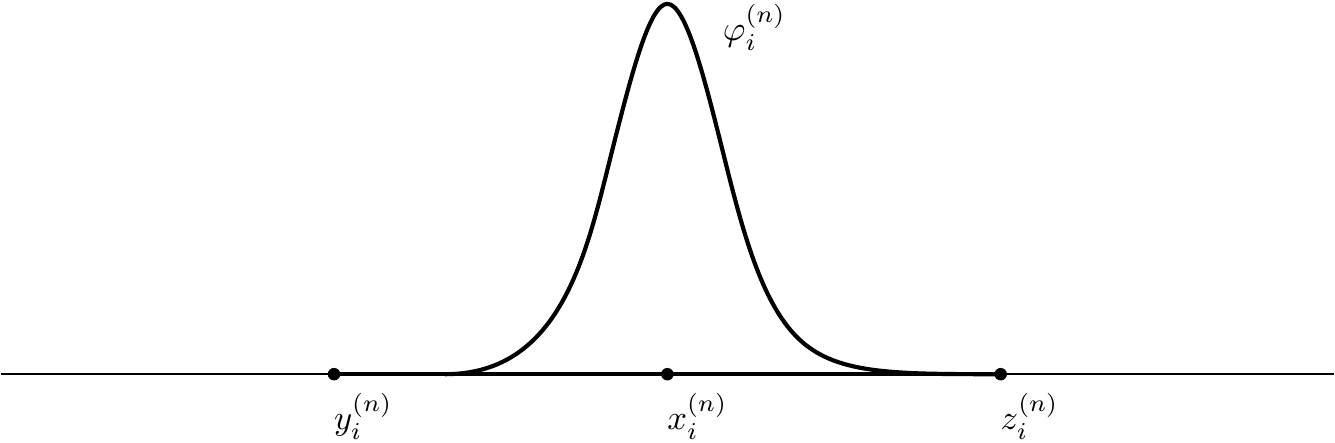}
	\caption{The function $\varphi_i^{(n)}$ from Example \ref{escempio} is supported on the interval $I_i^{(n)}=(y_i^{(n)}, z_i^{(n)})$ and takes value $1$ at $x_{i}^{(n)}.$}\label{fig:compact}
\end{center}
	\end{figure}
\end{example}

Note that the function $K(x,y)=0$ is the covariance function of the GRF $X_0$, which corresponds to the probability measure $\delta_{0}\in \G(E^0)$ concentrated on the zero function $0\in \Cr 0D{}$. Since $\P\left\{\|X_0\|_{D,0} \ge 1\right\}=0$, equation \eqref{Emisfacteq} proves also that $[X_n]$ does not converge to $[X_0]$ in $\G(E^0)$, even if $K_{X_n}\to K_{X_0}$ in $\Cr 0D{}$.

The previous Example \ref{escempio} can be generalized to prove the following result, which shows that the condition $r=\infty$ in the second part of the statement of Theorem \ref{thm:1} is necessary.

\begin{thm}\label{thm:counter}
If $r$ is finite, the map $(\mathcal{K}^r)^{-1}$ is not continuous.
\end{thm}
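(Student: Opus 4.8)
The plan is to construct, for a fixed finite $r$, a sequence of smooth Gaussian random fields whose covariance functions converge in $\mathcal{C}^r$ but whose laws do not converge narrowly in $\G(E^r)$; this exhibits the failure of continuity of $(\mathcal{K}^r)^{-1}$ at the limiting covariance. First I would reduce to a local statement: as in Example \ref{escempio}, it suffices to work on a single coordinate chart, i.e. to take $M=D\subset \R^m$ a compact disk, since a sequence converging locally but not globally still witnesses discontinuity, and conversely a counterexample on $D$ can be transplanted into any $M$ by a bump/extension argument (using the product embedding structure \eqref{prodembballeq}).

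The heart of the construction is to upgrade Example \ref{escempio} from the $\mathcal{C}^0$ to the $\mathcal{C}^r$ setting. The idea is to replace the bump functions $\varphi_i^{(n)}$ by functions that are not only supported in disjoint tiny intervals $I_i^{(n)}$ but also have the property that some fixed-order derivative $\partial_\alpha \varphi_i^{(n)}$, with $|\alpha|=r$, attains a controlled nonzero value at $x_i^{(n)}$, while at the same time all derivatives up to order $r$ of $\varphi_i^{(n)}$ scaled by the covariance normalization are small in sup-norm. Concretely, one can take $\varphi_i^{(n)}$ to be a rescaling: if $\psi$ is a fixed smooth bump with $\partial_1^r\psi(0)=1$, set $\varphi_i^{(n)}(x)=\lambda_n^{r}\,\psi\big((x-x_i^{(n)})/\lambda_n\big)$ for a width $\lambda_n\to 0$ chosen so that $\|K_{X_n}\|_{D,r}\to 0$ but $\partial_1^r\varphi_i^{(n)}(x_i^{(n)})=1$. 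Then with $X_n=\frac{1}{a_n}\sum_{i=1}^{N_n}\gamma_i\varphi_i^{(n)}$ (with $N_n$ and $a_n$ chosen as in Example \ref{escempio}, using $N_n$ growing fast enough), the random variables $\partial_1^r X_n(x_i^{(n)})$ are independent Gaussians, and the same max-over-$i$ estimate \eqref{Pmisfacteq} gives $\P\{\|X_n\|_{D,r}<1\}\to 0$, hence $\liminf_n \E\{\|X_n\|_{D,r}\}\ge 1$. Meanwhile $K_{X_n}\to 0=K_{X_0}$ in $\mathcal{C}^{r,r}$, where $X_0$ is the zero field with $\|X_0\|_{D,r}=0$ almost surely. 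By Portmanteau (or directly, since $\E\{\|\cdot\|_{D,r}\}$ is lower semicontinuous in the narrow topology when applied along Skorohod representatives), $[X_n]$ cannot converge narrowly to $[X_0]=\delta_0$ in $\G(E^r)$, while $\mathcal{K}^r([X_n])=K_{X_n}\to K_{X_0}=\mathcal{K}^r([X_0])$. This is exactly the statement that $(\mathcal{K}^r)^{-1}$ fails to be continuous at the point $K_{X_0}$.

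The main technical obstacle is the bookkeeping of scales: one must verify that the single width parameter $\lambda_n$ can simultaneously force $\|K_{X_n}\|_{D,(r,r)}\to 0$ (which controls all mixed derivatives of $\varphi_i^{(n)}(x)\varphi_j^{(n)}(y)$ up to order $r$ in each variable, i.e. requires $\lambda_n^{2r}\cdot \lambda_n^{-2r}\cdot(\text{const})$ to be tamed — so one actually needs an extra small factor, say take $\varphi_i^{(n)}$ with an additional amplitude $\epsilon_n\to 0$ and compensate in $a_n$) while still keeping $\partial_1^r X_n(x_i^{(n)})$ of order $1/a_n$ times a standard Gaussian, so that the probability estimate \eqref{Pmisfacteq} goes through unchanged. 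A clean way to organize this is: pick the amplitude so that $\max_i|\partial_1^r X_n(x_i^{(n)})|$ has the same distribution as $\frac{1}{a_n}\max_{i\le N_n}|\gamma_i|$ regardless of $\lambda_n$ (possible because $\partial_1^r\varphi_i^{(n)}(x_i^{(n)})$ can be normalized to $1$ independently of the overall $\mathcal{C}^r$-size by choosing $\psi$ and the scaling correctly), and then choose $\lambda_n\to 0$ fast enough, after $a_n,N_n$ are fixed, to kill $\|K_{X_n}\|_{D,(r,r)}$. Since the lower derivatives of $\varphi_i^{(n)}$ are $O(\lambda_n^{r-|\alpha|})$ and hence no larger, and the covariance is a product, the $\mathcal{C}^{r,r}$ norm of $K_{X_n}$ is $O(\lambda_n^{-2r}\cdot\lambda_n^{2r}\cdot a_n^{-2})=O(a_n^{-2})$ — wait, this is already small, so in fact no extra amplitude is needed and the construction closes directly; the apparent obstacle dissolves once one notes the product structure makes the top-order-in-$x$, top-order-in-$y$ derivative scale like $a_n^{-2}$ times a bounded quantity. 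Thus the only real content beyond Example \ref{escempio} is choosing $\varphi_i^{(n)}$ with a prescribed $r$-th derivative at one point, which is elementary, and invoking the Markov/Portmanteau argument verbatim at the level of $\|\cdot\|_{D,r}$ instead of $\|\cdot\|_{D,0}$.
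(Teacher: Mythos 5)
Your proof is correct, but it takes a genuinely different route from the paper's. The paper works on $D=[0,1]\subset\mathbb{R}$ and obtains its $\mathcal{C}^r$ counterexample by integrating the fields $X_n$ of Example \ref{escempio} $r$ times from a base point $c\notin D$: the identity $\frac{d^{2r}}{dx^r dy^r}K_{Y_n}=K_{X_n}$, together with the vanishing of $K_{Y_n}$ near $(c,c)$, yields $K_{Y_n}\to 0$ in $\mathcal{C}^{r,r}$ essentially for free from the $\mathcal{C}^0$ smallness of $K_{X_n}$, while $\frac{d^r}{dx^r}Y_n=X_n$ preserves the non-convergence; the result is then transplanted to an arbitrary $M$ and target $\mathbb{R}^k$ via an explicit continuous operator $T$ (a bump function times an embedding times a fixed vector $v$). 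You instead rebuild the counterexample directly at the level of $r$-th derivatives by rescaling the bumps, $\varphi_i^{(n)}(x)=\lambda_n^{r}\psi((x-x_i^{(n)})/\lambda_n)$ with $\partial_1^r\psi(0)=1$, so that $\partial_1^r X_n(x_i^{(n)})=\gamma_i/a_n$ exactly reproduces the estimate \eqref{Pmisfacteq} for $\|X_n\|_{D,r}$, while $\|\partial_x^{\alpha}\partial_y^{\beta}K_{X_n}\|_\infty\le a_n^{-2}\lambda_n^{r-|\alpha|}\lambda_n^{r-|\beta|}\|\partial^\alpha\psi\|_\infty\|\partial^\beta\psi\|_\infty=O(a_n^{-2})$ for $|\alpha|,|\beta|\le r$ (and $\mathcal{C}^{(r,r)}$ control is indeed stronger than what the codomain of $\mathcal{K}^r$ requires). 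Your scaling bookkeeping is right, including your eventual realization that no extra amplitude factor is needed; the Portmanteau step (the closed set $\{\|f\|_{D,r}\ge 1\}$ has $\delta_0$-measure zero but $[X_n]$-measure tending to one) is also sound. What each approach buys: the paper's integration trick reuses Example \ref{escempio} as a black box and makes the covariance estimate a one-line observation, but it is intrinsically one-dimensional and therefore needs the separate transplantation map $T$; your rescaling works directly in any ambient dimension $m$ and makes transparent that the obstruction sits entirely at the top derivative order, at the cost of redoing the covariance estimate. The only places where your writeup is looser than it should be are the mid-argument self-correction about the amplitude (which should simply be stated as the clean bound above) and the passage to a general manifold $M$ and target $\mathbb{R}^k$, which you gesture at but do not carry out; the paper's operator $T$ shows this is routine, so neither is a genuine gap.
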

\begin{proof}
Construct $X_n\in \g 0D{}$ as in Example $\ref{escempio}$, with $D=[0,1]\subset \R$. Since $X_n$ is a sum of functions with compact support, we can as well consider $X_n$ as a random element of $\Cr 0\R{}$. So that $K_{X_n}\to 0$ in $\Cr 0{\R\times \R}{}$, because their support is contained in $D\times D$, but $X_n \centernot{\nrw} 0$. 

Let $c\notin D$ and let $Y_n$ be the GRF defined as
\be 
Y_n(\cdot)=\int_c^{(\cdot)}\int_c^{s_r}\dots \int_c^{s_2}X_n(s_1)ds_1 \dots ds_{r}.
\ee
Then $Y_n\in \g r\R{}$ (indeed $Y_n$ is a smooth GRF), and $\frac{d^r}{dx^r}Y_n=X_n$. Moreover,
\be 
\frac{d^{2r}}{dx^rdy^r}K_{Y_n}=K_{X_n}\to 0
\ee
in $\Cr 0{\R\times\R}{}$ and $K_{Y_n}=0$ in a neighborhood of $(c,c)$, therefore $K_{Y_n}\to 0$ in $\Cr {r,r}{\R\times\R}{}$, but $Y_n\not\nrw 0$.

Let $M$ be a smooth manifold of  dimension $m$. Denoting $(t,x)\in \R\times \R^{m-1}=\R^m$, we define a smooth function $\rho\colon \R^{m}\to [0,1]$ with compact support and such that $\rho=1$ in a neighborhood of the set $[0,1]\times \{0\}$. 
Let $j\colon \R^m\to M$ be any embedding and fix $v\in \R^k$. Define the transformation $T\colon \mathcal{C}^r(\R,\R)\to \Cr r Mk$ such that $f\mapsto g=Tf$, where
\be 
\begin{aligned}
g(j(t,x))=\rho(t,x)f(t)v && \\
g(p)=0 \quad \text{if }p\notin j(\R^m)
\end{aligned}
\ee
Since $\rho$ has compact support, $T$ is continuous for all $r\in\N\cup\{\infty\}$
, so that $Z_n=TY_n$ is a well defined smooth GRF with compact support on $M$. Thanks to the continuity of $T$, we have that $K_{Z_n}\to 0$ in $\Cr {r,r}M{k\times k}$, but $Z_n \centernot{\nrw} 0$ in $\g rMk$ because $
\frac{d^r}{dt^r}Z_n(j(t,0))=X_n(t)
$
for every $t\in[0,1]$ and $X_n|_{D}\not\nrw 0$.
\end{proof}

%
%
%
%
%
%
%
%

\section{Proof of Theorem \ref{thm:dtgrf:3}}\label{sec:PT3}

Given a Gaussian field $X=(X^1, \ldots, X^k)\in \mathcal{G}^{r}(M, \R^k)$ defined on a probability space $(\Omega, \mathfrak{S}, \PP)$, we consider the Hilbert space $\Gamma_X$ defined by:
\be \Gamma_X=\overline{\textrm{span}\{X^j(p),\,p\in M,\, j=1, \ldots, k\}}^{L^2(\Omega, \mathfrak{S}, \PP)}.\ee
Since $X$ is a Gaussian field, all the elements of $\Gamma_X$ are gaussian random variables (and viceversa).
By Lemma \ref{Ltwocont}, we know that the function $\gamma_X\colon M\to L^2\Prob$ defined as in equation \eqref{eq:gammaX} is of class $\mathcal{C}^r$, therefore we can define a linear map $\rho_X:\Gamma_X\to E^r$ by:
\be \rho_X(\gamma)=\E\left(X(\cdot)\gamma\right)=\left(\langle \gamma_X^1(\cdot), \gamma\rangle_{L^2(\Omega, \mathfrak{S}, \PP)},\dots,\langle \gamma_X^k(\cdot), \gamma\rangle_{L^2(\Omega, \mathfrak{S}, \PP)}\right)\ee
\begin{prop}\label{propo:injection}The map $\rho_X\colon \Gamma_X\to E^r$ is a linear, continuous injection.\end{prop}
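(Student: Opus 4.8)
The claim is that $\rho_X\colon \Gamma_X\to E^r$ is linear, continuous, and injective. Linearity is immediate from the definition, since $\gamma\mapsto \E(X(\cdot)\gamma)$ is linear in $\gamma$. So the real content is continuity and injectivity, and I would treat these separately.

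\textbf{Continuity.} The target $E^r=\Cr rMk$ carries the weak Whitney topology, which is generated by the seminorms $\|\cdot\|_{Q,r}$ of \eqref{eq:seminormQR} (or $\|\cdot\|_{Q,r'}$ for all $r'$ when $r=\infty$). So it suffices to bound $\|\rho_X(\gamma)\|_{Q,r}$ by a constant times $\|\gamma\|_{L^2\Prob}$ for each embedding $Q\colon D\hookrightarrow M$ of a compact disk. Writing $\rho_X(\gamma)^i(q)=\langle \gamma_X^i(q),\gamma\rangle_{L^2\Prob}$ and differentiating under the (coordinate-chart) derivative, which is legitimate because $\gamma_X$ is $\mathcal{C}^r$ into $L^2\Prob$ by Proposition \ref{Ltwocont}, one gets $\de_\a\big(\rho_X(\gamma)^i\circ Q\big)(x)=\langle \de_\a(\gamma_X^i\circ Q)(x),\gamma\rangle_{L^2\Prob}$. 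By Cauchy--Schwarz this is at most $\|\de_\a(\gamma_X^i\circ Q)(x)\|_{L^2\Prob}\cdot\|\gamma\|_{L^2\Prob}$, and by Proposition \ref{Ltwocont} again the first factor equals $\sqrt{\de_{(\a,\a)}K^{i,i}_X(Q x,Q x)}$, which is bounded on the compact set $\overline{\mathrm{int}(D)}$ since $K_X\in\Cr{r,r}{M\times M}{k\times k}$ (Corollary \ref{cor:covcrr}). Taking the sup over $|\a|\le r$, $i\le k$ and $x\in\mathrm{int}(D)$ gives $\|\rho_X(\gamma)\|_{Q,r}\le C_{Q}\,\|\gamma\|_{L^2\Prob}$, proving continuity; the $r=\infty$ case is identical, one seminorm at a time.

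\textbf{Injectivity.} Suppose $\rho_X(\gamma)=0$, i.e. $\langle\gamma_X^j(p),\gamma\rangle_{L^2\Prob}=0$ for every $p\in M$ and $j=1,\dots,k$. Since $\Gamma_X$ is by definition the closed $L^2\Prob$-span of the random variables $X^j(p)=\gamma_X^j(p)$, the element $\gamma\in\Gamma_X$ is orthogonal to a dense subset of $\Gamma_X$, hence orthogonal to itself, hence $\gamma=0$. This is the cleanest step; there is essentially no obstacle here beyond unwinding definitions.

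\textbf{Main obstacle.} I expect the only genuinely delicate point to be the interchange of the coordinate derivative $\de_\a$ with the $L^2\Prob$-inner product against $\gamma$ in the continuity estimate — i.e. the identity $\de_\a\langle\gamma_X^i\circ Q,\gamma\rangle=\langle\de_\a(\gamma_X^i\circ Q),\gamma\rangle$. This is exactly what is guaranteed by the $\mathcal{C}^r$-regularity of $\gamma_X\colon M\to L^2\Prob^k$ in Proposition \ref{Ltwocont} (differentiability in the Banach space $L^2\Prob$ combined with continuity of the bounded linear functional $\langle\cdot,\gamma\rangle$), so once that proposition is invoked the rest is routine. If one wanted to avoid even that, one could instead note $\rho_X(\gamma)^i(q)=\E\{X^i(q)\gamma\}$ and differentiate using that $\gamma_X$ is $\mathcal{C}^r$, but it amounts to the same thing.
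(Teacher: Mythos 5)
Your proposal is correct and follows essentially the same route as the paper: injectivity via orthogonality to the generating set of $\Gamma_X$, and continuity via Cauchy--Schwarz on $\de_\a\rho_X(\gamma)=\langle\de_\a\gamma_X,\gamma\rangle$ together with Proposition \ref{Ltwocont} to identify $\E\{|\de_\a(X\circ Q)(x)|^2\}$ with the diagonal derivatives of $K_X$, which are bounded on compacta. Your explicit justification of the derivative/inner-product interchange is a point the paper leaves implicit, but it is the same argument.
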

\begin{proof}Let $\gamma\in\Gamma_X$ and assume that $\rho_X(\gamma)=0$. Then $\langle\gamma, X^j(p)\rangle_{L^2}=0$ for all $p\in M$ and $j=1,\ldots,k$, so that $\gamma \in \Gamma_X^\perp$, thus $\gamma=0$. This proves that $\rho_X$ is injective.

By linearity, it is sufficient to check continuity at $\gamma=0$.
Let $Q\colon D\hookrightarrow M$ be the embedding of a compact set $D\subset \R^m$. If $r$ is finite, we have
\begin{align}
\|\rho_X(\gamma)\|_{Q, r} &=\sup_{|\a|\le r, x\in D} |\E\left\{ \de_\a(X\circ Q)(x)\gamma\right\}|  \\
&\le \sqrt{k} \sup_{|\a|\le r, x\in D} \E\left\{|\de_\a(X\circ Q)(x)|^2\right\}^{\frac12}\|\gamma\|_{L^2} \\
&=\sqrt{k}\sup_{|\a|\le r, x\in D} \left(\sum_{j=1}^k \de_{(\a,\a)}(K_X^{j,j}\circ Q\times Q) (x,x)\right)^{\frac12}\|\gamma\|_{L^2}\\
&\le 
k\left(\|K_X\|_{Q\times Q, (r,r)}
\right)^{\frac12} \|\gamma\|_{L^2}.
\end{align}
Therefore $\lim_{\gamma \to 0}\|\rho_X(\gamma)\|_{Q, r}=0$ for every $Q$, hence $\rho_X$ is continuous. For the case $r=\infty$, it is sufficient to note that continuity with respect to $E^r$ for every $r$, implies continuity with respect to $E^\infty$.
\end{proof}
 
 \begin{prop}\label{prop:CamMa} The image of $\rho_X$ coincides with the Cameron-Martin space (see \cite[p. 44, 59]{bogachev}) of the measure $[X]$ and we denote it by $\mathcal{H}_X$. 
 \end{prop}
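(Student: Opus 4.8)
The plan is to identify the image of $\rho_X$ with the Cameron--Martin space $H_{[X]}$ by directly comparing $\rho_X$ with the canonical construction of the Cameron--Martin space from \cite[Ch. 2]{bogachev}. Recall that for a centered Radon Gaussian measure $\mu$ on a locally convex space $E^r$, one first forms $X_\mu^* = \overline{E^{r*}}^{L^2(\mu)}$, the closure in $L^2(E^r,\mu)$ of the continuous linear functionals, and then defines $H_\mu$ as the image of the map $R_\mu\colon X_\mu^*\to E^r$ given by the Bochner integral $R_\mu(g) = \int_{E^r} f\, g(f)\, d\mu(f)$, equipped with the inner product making $R_\mu$ an isometry. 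So the task is to produce a natural isometric identification between $\Gamma_X$ and $X_{[X]}^*$ under which $\rho_X$ becomes $R_{[X]}$.

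First I would set up the correspondence on the level of functionals: the evaluation functionals $f\mapsto \lambda f(p)$ for $p\in M$, $\lambda\in(\R^k)^*$ lie in $E^{r*}$, and by Remark \ref{rem:bridge} (citing Theorem \ref{thm:deltadense}) their span is dense in $E^{r*}$. Pushing forward by $X$, the functional $f\mapsto \lambda f(p)$ corresponds in $L^2(\Omega,\mathfrak S,\P)$ to the random variable $\lambda X(p)$, whose $L^2$-closure of the span is exactly $\Gamma_X$ (for $k=1$; in general one takes components $X^j(p)$). Since $X_*\colon L^2(E^r,[X])\to L^2(\Omega,\P)$ is an isometry onto its image when restricted appropriately, and since density of evaluation functionals in $E^{r*}$ is preserved under $L^2$-closure, I would conclude that $X_*$ restricts to an isometric isomorphism $X_{[X]}^*\xrightarrow{\ \sim\ }\Gamma_X$. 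Next I would check that this isomorphism intertwines $R_{[X]}$ and $\rho_X$: for a functional $g\in X_{[X]}^*$ corresponding to $\gamma\in\Gamma_X$, the $j$-th component of $R_{[X]}(g)$ at $p$ is $\int_{E^r} f^j(p)\, g(f)\, d[X](f) = \E\{X^j(p)\gamma\}$, which is precisely the $j$-th component of $\rho_X(\gamma)(p)$ by definition. Here one must justify that the Bochner integral defining $R_{[X]}$ exists and commutes with evaluation at $p$ — but evaluation at $p$ is a continuous linear functional on $E^r$, so this is automatic from the defining property of the Bochner integral (or, at the level of coordinates, from $\E\{X^j(p)\gamma\}$ being a well-defined element of $L^1$ by Cauchy--Schwarz and Gaussianity). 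Hence $\mathrm{Im}(\rho_X) = \mathrm{Im}(R_{[X]}) = H_{[X]}$, and the inner product transported from $\Gamma_X$ through $\rho_X$ agrees with the Cameron--Martin norm, so we may legitimately call this space $\mathcal H_X$.

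The one genuine subtlety — and the step I expect to require the most care — is the identification $X_{[X]}^*\cong\Gamma_X$: one must be sure that $X_*$ does not collapse information, i.e. that two functionals in $E^{r*}$ that agree $[X]$-almost everywhere are sent to the same element of $\Gamma_X$ and conversely, and that the closure operations on both sides match up. This is where one invokes that $[X]$ is a Radon Gaussian measure (so \cite[Ch. 2-3]{bogachev} applies) and that, by Lemma \ref{Ltwocont}, $\gamma_X$ is continuous (indeed $\mathcal C^r$), which guarantees $\Gamma_X$ is a well-defined closed Gaussian subspace of $L^2(\Omega,\P)$ consisting entirely of centered Gaussian variables. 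Modulo this bookkeeping, the proof is a direct translation between the two descriptions, and I would keep it short, citing \cite[Theorem 2.4.5, Section 3.2]{bogachev} for the properties of $H_\mu$ and $R_\mu$ rather than reproving them.
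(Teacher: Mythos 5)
Your proposal is correct and follows essentially the same route as the paper: both identify the $L^2$-closure of $(E^r)^*$ under $T\mapsto T(X)$ with $\Gamma_X$ via the density of point evaluations (Theorem \ref{thm:deltadense}), and then match the covariance operator of \cite[Lemma 2.4.1]{bogachev} with $\rho_X$. The paper phrases the Cameron--Martin membership through the representing-functional condition $L(h)=\E\{T(X)L(X)\}$ rather than through the Bochner integral $R_{[X]}$, but this is only a cosmetic difference.
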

 \begin{proof}
 According to \cite[Lemma 2.4.1]{bogachev} $\mathcal{H}_X$ is the set of those $h\in E^r$ for which there exists a $T\in (E^r)^*$ such that
 \be\label{eq:CM}
 L(h)=\E\{T(X)L(X)\}, \quad \text{for all $L\in (E^r)^*$}
 \ee
 Observe that the map $T\mapsto T(X)$ defines a surjection $(E^r)^*\to\Gamma_X$, because every continuous linear functional $T\in (E^r)^*$ can be approximated by linear combinations of functionals of the form $\delta_p^j:f\mapsto f^j(p)$ (see Theorem \ref{thm:deltadense} in Appendix \ref{app:dual}). For the same reason, condition \eqref{eq:CM} is equivalent to the existence of $\gamma\in \Gamma_X$ such that
 \be 
 h^j(p)=\E\{\gamma X^j(p)\}\}, \quad \text{for all $p\in M$ and $j=1,\dots, k$}
 \ee
 that is, by definition, $h=\rho_X(\gamma)$. Thus $\mathcal{H}_X=\rho_X(\Gamma_X)$.
 \end{proof}

Observe that $\mathcal{H}_X$ contains all the functions $h_p^j=\rho_X(X^j(p))$ satisfying equation\eqref{suppbasiseq} in Theorem \ref{thm:dtgrf:3}.
Moreover, it carries the Hilbert structure induced by the map $\rho_X$, which makes it isometric to $\Gamma_X$. It follows that $\mathcal{H}_X$ is the Hilbert completion of the vector space span$\{h_p^j\colon p\in M,\ j=1,\dots,k\}$, endowed with the scalar product
\be 
\langle h_p^j,h_q^\ell\rangle_{\mathcal{H}_X}\doteq \left\langle X^j(p),X^\ell(q)\right\rangle_{L^2}=K_X^{j,\ell}(p,q).
\ee
Now, Theorem \ref{thm:dtgrf:3} follows from \cite[Theorem 3.6.1]{bogachev}:
\be\label{eq:sputoCM}
\text{supp}(X)=\overline{\mathcal{H}_X}^{\Cr rMk}.
\ee
In Appendix \ref{app:rgrf} (equation \eqref{eq:sputoCMapp}) the reader can find a proof of \eqref{eq:sputoCM} adapted to our language.
\begin{remark}
Note that the Hilbert space $\mathcal{H}_X$ depends only on $K_X$, thus it depends only on the measure $[X]$.
\end{remark}

%
%
%
%
%
%
%
%

\section{Proof of Theorems \ref{transthm2} and \ref{thm:transthm}}\label{sec:transversality}
\subsection{Transversality}
We want to prove some results analogous to Thom's Transversality Theorem (see \cite[Section 3, Theorem 2.8]{Hirsch}) in our probabilistic setting. We first recall the definition of transversality. Let $f\colon M\to N$ be a smooth map, $W\subset N$ a submanifold and $K\subset M$ be any subset. Then we say that $f$ is transverse to $W$ on $K$ and write $f\transv_K W$, if and only if for every $x\in K\cap f^{-1}(W)$ we have:
\be 
df_x(T_xM)+T_{f(x)}W=T_{f(x)}N.
\ee
We will simply write $f\transv W$ if $K=W$.
We recall the following classical tool, usually called the \emph{Parametric Transversality Theorem}.
\begin{thm}[Section 3, Theorem 2.7 from \cite{Hirsch}]
\label{parametric transversality}
Let $g\colon P\times F\to N$ be a smooth map between smooth manifolds of finite dimension.
Let $W\subset N$ be a smooth submanifold and $K\subset P$ be any subset. If $g \transv_{K\times F} W$, then $g(\cdot, f)\transv_K W$ for almost every $f\in F$.
\end{thm}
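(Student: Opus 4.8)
The plan is to run the classical argument for the Parametric Transversality Theorem: use the Preimage Theorem to turn $g^{-1}(W)$ into a submanifold of $P\times F$, and then apply Sard's theorem to the projection onto the parameter space $F$. It is precisely the use of Sard's theorem that requires $P\times F$ to be finite-dimensional — exactly the hypothesis that is unavailable in the setting of Theorem \ref{thm:transthm}, which is why that result needs a completely different, Gaussian-measure-based proof. Before starting I would carry out the usual routine reduction to the case in which $W$ is \emph{closed} in $N$: cover $W$ by countably many open sets $N_i\subset N$ with $W\cap N_i$ closed in $N_i$, note that transversality at a point depends only on a neighbourhood of the image point, and use that a countable intersection of full-measure subsets of $F$ still has full measure; running the argument below on each open set $g^{-1}(N_i)\subset P\times F$ and intersecting the resulting conull sets then yields the general statement. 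So from now on assume $W$ is closed in $N$, and set $c=\dim N-\dim W$.

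First I would check that the set $V\subset P\times F$ of points at which $g$ is transverse to $W$ is open: on the complement of the closed set $g^{-1}(W)$ transversality holds vacuously, while near a point of $g^{-1}(W)$ at which $g\transv W$ one passes to a slice chart of $W$ and observes that transversality becomes surjectivity of the composite of $dg$ with a fixed linear projection, which is an open condition. By hypothesis $K\times F\subset V$. Applying the Preimage Theorem to $g|_V$, the set $Z:=g^{-1}(W)\cap V$ is a smooth submanifold of $P\times F$ (empty, in which case the statement is vacuous, or) of codimension $c$; being a submanifold of the finite-dimensional manifold $P\times F$, it is itself a finite-dimensional manifold.

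Next I would apply Sard's theorem to the smooth map $\pi\colon Z\to F$ obtained by restricting the projection $P\times F\to F$: its set $\Sigma\subset F$ of critical values has measure zero, and it remains to show that $g(\cdot,f)\transv_K W$ whenever $f$ is a regular value of $\pi$. Fix such an $f$ and $p\in K$ with $y:=g(p,f)\in W$; then $(p,f)\in(K\times F)\cap g^{-1}(W)\subset V$, hence $(p,f)\in Z$, and regularity of $f$ gives that $d\pi_{(p,f)}\colon T_{(p,f)}Z\to T_fF$ is onto. Given $v\in T_yN$, transversality of $g$ at $(p,f)$ provides $a\in T_pP$, $b\in T_fF$ and $w\in T_yW$ with $v=dg_{(p,f)}(a,b)+w$; choosing $\tau\in T_{(p,f)}Z$ with $d\pi_{(p,f)}(\tau)=b$, so that $\tau=(a',b)$ with $a'\in T_pP$ and $dg_{(p,f)}(\tau)\in T_yW$ (since $Z\subset g^{-1}(W)$), I rewrite
\be
v=dg_{(p,f)}(a-a',0)+\bigl(dg_{(p,f)}(a',b)+w\bigr)=d\bigl(g(\cdot,f)\bigr)_p(a-a')+\bigl(dg_{(p,f)}(a',b)+w\bigr),
\ee
and the second summand lies in $T_yW$. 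Hence $v\in d(g(\cdot,f))_p(T_pP)+T_yW$, so $g(\cdot,f)$ is transverse to $W$ at $p$; as $p\in K$ was arbitrary, $g(\cdot,f)\transv_K W$, and this holds for every $f$ outside the null set $\Sigma$.

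The genuinely content-carrying ingredients are the Preimage Theorem and Sard's theorem, which I would simply quote, and the final linear-algebra computation, where the only real risk is mishandling the splitting of $T_{(p,f)}(P\times F)$ into its $P$- and $F$-components. The one point worth flagging is the need to know that the transversality locus $V$ is open before invoking the Preimage Theorem, which is exactly why the preliminary reduction to $W$ closed in $N$ is made. I would close by stressing that Sard's theorem is the sole step forcing $P\times F$ to be finite-dimensional, which is precisely the obstruction that the infinite-dimensional Theorem \ref{thm:transthm} must circumvent.
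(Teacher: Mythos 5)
Your proof is correct and is precisely the classical Sard-plus-preimage-theorem argument; the paper does not prove this statement at all but simply recalls it as Theorem 2.7 of Chapter 3 of Hirsch's book, where the proof given is essentially the one you wrote. The one step worth keeping explicit in a write-up is the reduction to $W$ closed (covering $W$ by countably many opens $N_i$ with $W\cap N_i$ closed in $N_i$ and intersecting the resulting conull sets), which you have handled correctly.
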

In our context we prove the following infinite-dimensional, probablistic version of Theorem \ref{parametric transversality}.
\begin{thm}\label{transthm}
Let $F\subset E^r$ such that $F=\spt(X)$ for some $X\in \g r Mk$, with $r\in\N\cup\{\infty\}$. Let $P,N$ be smooth manifolds and $W\subset N$ a submanifold. Assume that $\Phi\colon P\times F\to N$ is a ``smooth''\footnote{Here by ``smooth'' we mean that:
\begin{enumerate}
    \item the map $\Phi$ is smooth when restricted to finite dimensional subspaces;
    \item the linear map $(p, f, v)\mapsto D_{(p, f)}\Phi v= D_{(p, f)}\left(\Phi|_{\textrm{span}\{f,v\}}\right) v$ is continuous in all its arguments.
    \end{enumerate}} map such that $\Phi\transv W$. Then
\be 
\P\{\Phi(\cdot, X)\transv W\}=1.
\ee

\end{thm}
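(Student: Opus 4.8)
The plan is to reduce the infinite-dimensional statement to the classical finite-dimensional parametric transversality theorem (Theorem \ref{parametric transversality}) by a countable exhaustion argument, using the Cameron--Martin structure of the Gaussian measure $[X]$ together with Theorem \ref{thm:dtgrf:3}, which identifies $F = \spt(X)$ with the closure of the Cameron--Martin space $\mathcal{H}_X$. The key point is that $\mathcal{H}_X$ is a separable Hilbert space, hence it contains a countable dense subset $\{h_n\}_{n\in\N}$; and more importantly, one can choose finite-dimensional subspaces $V_1 \subset V_2 \subset \cdots \subset \mathcal{H}_X$ with $\bigcup_n V_n$ dense in $F$. For a fixed $f_0\in F$, I would then apply the classical Theorem \ref{parametric transversality} to the restricted map $\Phi|_{P \times (f_0 + V_n)}$: the hypothesis $\Phi \transv W$ on all of $P\times F$ must be shown to descend to transversality of this restriction on the relevant parameter set, which is where the precise meaning of ``smooth'' in the footnote (smoothness on finite-dimensional subspaces, plus joint continuity of the differential) is used. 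This yields, for each $n$, a full-measure set of translations $v\in V_n$ such that $\Phi(\cdot, f_0+v)\transv W$.

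The heart of the argument is then a translation/quasi-invariance step: by the Cameron--Martin theorem (\cite[Theorem 2.4.5]{bogachev}), for any $h\in\mathcal{H}_X$ the translated measure $[X+h]$ is equivalent (mutually absolutely continuous) to $[X]$. Therefore, if $A \subset E^r$ is the event $\{\Phi(\cdot,f)\transv W\}$, showing $\P(X \in A) = 1$ is equivalent to showing $\P(X + h \in A) = 1$ for some, equivalently every, $h \in \mathcal{H}_X$; more usefully, it suffices to show that the set $\{f : \Phi(\cdot,f)\transv W\}$ has full measure after we average over a finite-dimensional family of Cameron--Martin translations. Concretely: write $X = X' + \text{(projection onto } V_n)$, disintegrate $[X]$ along $V_n$ using that the $V_n$-marginal is a nondegenerate Gaussian on $V_n$ (nondegeneracy because $V_n \subset \mathcal{H}_X$), and apply Fubini together with the finite-dimensional result from the previous paragraph to conclude $\P(\Phi(\cdot,X)\transv_{K_n} W) = 1$ for an exhausting family of compact (or $\sigma$-compact) sets $K_n \subset P$. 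Taking a countable union over $n$ with $\bigcup_n K_n = P$ gives $\P(\Phi(\cdot,X)\transv W) = 1$. Here one must be slightly careful: transversality is a local condition on $P$, so one should fix a countable cover of $P$ by coordinate charts and handle each chart separately, and similarly a countable cover of $W$ (or $N$) so that in each piece $W$ is cut out by finitely many equations and the finite-dimensional theorem applies cleanly.

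I expect the main obstacle to be the disintegration/Fubini step and verifying that the finite-dimensional parametric transversality theorem genuinely applies to $\Phi|_{P \times (f_0+V_n)}$. The subtlety is twofold: first, $\Phi\transv W$ as an abstract hypothesis involves the full (possibly unbounded) differential $D_{(p,f)}\Phi$ whose image lies in $T_{\Phi(p,f)}N$, and one must check that restricting to the finite-dimensional affine slice $f_0+V_n$ still produces a submersion onto a complement of $T W$ at points of $\Phi^{-1}(W)$ — this requires that the directions in $V_n$ (which are dense in $F$) suffice to span the needed complement, and density alone does not immediately give this for a fixed $n$, so one genuinely needs to take $n\to\infty$ and use that transversality is an open condition on the finite-dimensional data. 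Second, one must ensure measurability of the event $\{\Phi(\cdot,X)\transv W\}$ (so that ``probability one'' makes sense), which follows from the continuity hypotheses on $\Phi$ and $D\Phi$ together with Lemma \ref{borelemm1}, but should be stated explicitly. Once these points are in place, the combination of Cameron--Martin quasi-invariance, the exhaustion $V_n \uparrow$, and the classical Sard-based Theorem \ref{parametric transversality} closes the argument, with Sard's theorem being invoked only in finite dimensions, exactly as the remark following Theorem \ref{thm:transthm} anticipates.
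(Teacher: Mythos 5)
Your overall strategy --- reduce to the classical parametric transversality theorem along finite-dimensional Cameron--Martin directions, then use Fubini and the Gaussian structure to upgrade ``almost every translation'' to ``probability one'' --- is the strategy of the paper's proof. But there is a genuine gap in your localization scheme. You localize only over $P$ (compact exhaustion $K_n$, coordinate charts) while keeping a single global increasing family $V_1\subset V_2\subset\cdots$ of finite-dimensional subspaces of $\mathcal{H}_X$. To invoke Theorem \ref{parametric transversality} for $\Phi|_{P\times(f_0+V_n)}$ on $K_n\times V_n$ you need this restriction to be transverse to $W$ at \emph{every} point of $(K_n\times V_n)\cap\Phi^{-1}(W)$; the hypothesis $\Phi\transv W$ only guarantees that at each such point \emph{some} finite-dimensional subspace $F_0\subset F$, depending on the point, supplies the missing directions, and since $F$ is infinite dimensional there is in general no single $n$ for which $V_n$ works simultaneously at all points of the slice, even after fixing $K_n$. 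Your proposed fix (``take $n\to\infty$ and use openness of transversality'') does not close this: the conclusions for different $n$ are ``almost every $v$'' statements on different parameter spaces $V_n$, attached to different affine slices, and they cannot simply be united. The paper resolves exactly this by proving a statement local in \emph{both} variables: for each $(p,x)\in P\times F$ it produces a compact $Q\ni p$, a neighborhood $U_x\ni x$, a single finite tuple $h=(h_1,\dots,h_a)\in(\mathcal{H}_X)^a$ and an $\e>0$ such that $\Phi|_{P\times(\tilde{x}+\mathrm{span}\{h\})}\transv_{Q\times D_\e}W$ for all $\tilde{x}\in U_x$ (using that transversality on a compact set is an open condition on the data and that $\mathcal{H}_X$ is dense in $F$), and then covers $P\times F$ by countably many such products $Q_k\times U_l$ using second countability. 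This two-variable localization is the missing idea.

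A secondary point: where you disintegrate $[X]$ along $V_n$, the paper instead translates by a single small vector $hu\in\mathcal{H}_X$, applies Fubini--Tonelli on $V_x\times D_\e$ to get $\P\{X+hu\in (V_x+hu)\setminus\Lambda\}=0$ for almost every $u$, and invokes the Cameron--Martin theorem to conclude $\P\{X\in U_x\setminus\Lambda\}=0$. Your decomposition $X=Y+Z$ with $Y$ an independent nondegenerate Gaussian on $V_n$ is a legitimate alternative, but it needs its own justification (it follows from the Karhunen--Lo\`eve representation, Theorem \ref{thm:rep}), and in either form the Fubini step must be carried out on the localized neighborhoods $U_x$ produced above rather than globally; your instinct about measurability and about reducing to $W$ closed (the paper reduces to $W$ compact) is correct and matches the paper.
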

A particular case in which we can apply Theorem \ref{transthm} is when $P=M$,  $N=J^r=J^r(M,\R^k)$, $r=\infty$ and $\Phi$ is the jet-evaluation map
\be\label{eq:jr}
j^r\colon M \times E^\infty  \to J^r, \qquad (p,f)\mapsto j^r_pf.
\ee
It is straightforward to see that this map is ``smooth'' in the sense of the statement of Theorem \ref{transthm}.
\begin{proof}
(In order to simplify the notations, we denote by $\phi(X)$ the map $p\mapsto \Phi(p, X)$.)

First we show that we can assume $W$ to be compact (possibly with boundary). Indeed let $W=\cup_{k\in \N} W_k$, such that $W_k$ is compact. Then $\Phi\transv W_k$ for any k, and 
\be 
\P\{\phi(X)\transv W\}\ge 1- \sum_{k\in\N}\left(1-\P\{\phi(X)\transv W_k\}\right).
\ee

Moreover, we claim that it is sufficient to prove the following weaker statement.

\begin{stella}
 For all $p\in P$ and $x\in F$ there are neighborhoods $Q_p$ of $p$ in $P$ and $U_x$ of $x$ in $E^r$ such that: \be \P
\big\{\phi(X)\transv_{\overline{Q}_p}W\big|X\in U_x\big\}:=\frac{\P\left(\{\phi(X)\transv_{\overline{Q}_p}W\}\cap\{X\in U_x\}\right)}{\P\left(\{X\in U_x\}\right)}=1.\ee
\end{stella}

Assume that $(*)$ is true, then there exists a countable open cover of $P\times F$ of the form $Q_k\times U_l$ such that $\P\{\phi(X)\transv_{\overline{Q}_k}W|X\in U_l\}=1$, i.e. the probability that $X\in U_l$ and  $\phi(X)$ is not transverse to $W$ at some point $p\in \overline{Q}_k$ is zero. 
Thus
\be 
\P\{\phi(X)\not\transv W\}\le\sum_{l,k}\P\left\{\phi(X)\not\transv_{\overline{Q}_k}W,X\in U_l\right\}=0,
\ee
hence the claim is true.

Let us prove $(*)$. Let $p\in P$ and $x\in F$. Since $W\subset N$ is closed, if $\Phi(p,x)\notin W$, then $\Phi(q,\tilde{x})\notin W$ for all $q$ in a compact neighborhood $Q$ of $p$ and $\tilde{x}$ in some neighborhood $N_x$ of $x$ in $E^r$, so that, in particular $\P\{\phi(X)\transv_{Q}W|X\in N_x\}=1$. 

Assume now that $\Phi{(p,x)}=\theta\in W$, then by hypothesis we have that 
\be
D_{(p,x)}\Phi(T_pP+F) + T_\theta W=T_\theta N,
\ee
hence there is a finite dimensional space $F_0=\textrm{span}\{f_1,\dots,f_a\}\subset F$ such that 
\be
D_{(p,x)}\Phi\left(T_pP+F_0\right) + T_\theta W=T_\theta N.
\ee
Note that $F_0=T_xF_x$, where $F_x=x+\text{span}\{f_1,\dots,f_a\}$. Therefore 
$
\Phi|_{P\times F_x}\transv_{(p,x)} W
$
(here we are in a finite dimensional setting). Moreover, there is a compact neighborhood  $p\in Q\subset P$ and a $\e>0$ such that
\be \label{difftrasveq}
\Phi|_{P\times F_x}\transv_{Q\times D_\e} W.
\ee
where $D_\e=D_\e(x,f)=\{x+f_1u^1+\dots +f_au^a\colon u\in \R^a,\ |u|\le\e\}$.
Observe that the set of $(a+1)-$tuples $(x,f)=(x,f_1,\dots,f_a)\in F\times F^a$ for which \eqref{difftrasveq} holds (with fixed $\e$), form an open set, indeed the map 
\be 
\tau\colon F\times F^a\to \mathcal{C}^\infty(P\times \R^a,N),\qquad \tau(x,f):(p,u)\mapsto \Phi(p,x+fu)
\ee
is continuous and the set $\Theta=\{T\in \mathcal{C}^\infty(P\times \R^a,N)\colon T\transv_{Q\times D_\e}W\}$ is open in the codomain because $Q\times D_\e $ is compact and $W$ is closed (see \cite[p. 74]{Hirsch}); therefore 
\be 
\tau^{-1}(\Theta)=\{(x,f)\in F\times F^a\colon \text{\eqref{difftrasveq} holds}\}
\ee
is open. It follows that there is an open neighborhood $V_x$ of $x$ and an $h\in (\mathcal{H}_X)^a$ such that \eqref{difftrasveq} holds with $(\tilde{x},h)$ for any $\tilde{x}\in V_x$, indeed the Cameron-Martin space $\mathcal{H}_X$ is dense in $F$ (see equation \eqref{eq:sputoCM}). 

Define $\Lambda=\{e\in E^r \colon \phi(e)\transv_{Q}W\}$. 
By Theorem \ref{parametric transversality} we get that if $\tilde{x}\in V_x$, then $\phi(\tilde{x}+hu)\transv_{Q}W$, equivalently $(\tilde{x}+hu)\in \Lambda$, for almost every $|u|\le\e$. Denote by $1_\Lambda\colon E^r\to \{0,1\}$ the characteristic function of the (open) set $\Lambda$.
Using the Fubini-Tonelli theorem, we have
\be 
0=\int_{V_x}\left(\int_{\D^m_\e} 1-1_\Lambda(\tilde{x}+hu)du\right)d[X](\tilde{x})=\int_{\D^m_\e}\P\{X+hu\notin \Lambda, X\in V_x\}du.
\ee
hence $\P\{X+hu\in (V_x+hu) \- \Lambda\}=0$ for almost every $|u|\le\e$. Let $u$ be also so small that $x\in V_x+hu$. Then, taking $U_x=V_x+hu$, we have that $\P\{X+hu\in U_x\- \Lambda\}=0$. Since $hu\in \mathcal{H}_X$, the Cameron-Martin theorem (see \cite[Theorem 2.4.5]{bogachev}) implies that 
$[X]$ is absolutely continuous with respect to $[X+hu]$ and consequently $\P\{X\in U_x\- \Lambda\}=0$. In other words, $\P\{\phi(X)\transv_{Q}W|X\in U_x\}=1$, that proves $(*)$.
\end{proof}

We give know a criteria to check the validity of the hypothesis of Theorem \ref{transthm}, without necessarily knowing the support of $X$. Before that, let's observe that the canonical map $J^r:=J^r(M,\R^k)\to M$ is a smooth vector bundle over $M$ with fiber $J^r_p$, so that $T_{\theta}J_p^r$ is canonically identified with $J^r_p$ itself, for all $\theta\in J^r_p$. 
\begin{prop}
Let $X\in\g rMk$ and $F=\spt(X)$. Let $W\subset J^r$ be a smooth submanifold and fix a point $p\in M$. The next conditions are related by the following chain of implications: $
\text{\ref{itm:transjetA}}\Longleftarrow \text{\ref{itm:transjetB}}\Longleftarrow \text{\ref{itm:transjetC}}\iff \text{\ref{itm:transjetD}}
$.
\begin{enumerate}[$(I)$]
\item\label{itm:transjetA}  $j^r|_{M\times F}\transv_{\{p\}\times F} W$, where $j^r\colon M\times E^\infty\to J^r$ is the map defined in \eqref{eq:jr}; 
\item\label{itm:transjetB}  the vector space $\spt(j_p^rX)$ is transverse to  $(T_\theta W\cap T_\theta J^r_p)$ in $J^r_p$, for all $\theta \in j^r_p(F)\cap W$;
\item\label{itm:transjetC}  $\spt(j^r_pX)=J_p^r$;
\item\label{itm:transjetD}  given a chart of $M$ around $p$, the matrix below has maximal rank.
\be\label{covjeteq}
\left(\de_{(\a,\beta)}K_X(p,p)\right)_{|\a|,|\beta|\le r}.
\ee
\end{enumerate}
\end{prop}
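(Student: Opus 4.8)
The plan is to establish the chain $\text{\ref{itm:transjetC}}\iff\text{\ref{itm:transjetD}}$ first, then $\text{\ref{itm:transjetC}}\implies\text{\ref{itm:transjetB}}$, and finally $\text{\ref{itm:transjetB}}\implies\text{\ref{itm:transjetA}}$, since the latter two are almost tautological once one unwinds the definitions, while the first equivalence is the linear-algebraic heart of the statement.

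For $\text{\ref{itm:transjetC}}\iff\text{\ref{itm:transjetD}}$, I would observe that the random vector $j_p^rX\in J_p^r$ is Gaussian, and its support is (as recalled in the excerpt for finite-dimensional Gaussian vectors) exactly the image of its covariance matrix, equivalently the whole space if and only if that covariance matrix has maximal rank. So the claim reduces to identifying the covariance matrix of $j_p^rX$ with the matrix in \eqref{covjeteq}. Fixing a chart around $p$, the components of $j_p^rX$ are the partial derivatives $\de_\a X^i(p)$ for $|\a|\le r$ and $i\le k$, and by Proposition \ref{Ltwocont} we have $\E\{\de_\a X^i(p)\,\de_\beta X^j(p)\}=\de_{(\a,\beta)}K_X^{i,j}(p,p)$, which is precisely the $(\a,i),(\beta,j)$ entry of \eqref{covjeteq}. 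Hence $\spt(j_p^rX)=J_p^r$ iff this matrix is invertible; this handles the equivalence.

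For $\text{\ref{itm:transjetC}}\implies\text{\ref{itm:transjetB}}$: if $\spt(j_p^rX)=J_p^r$, then for any $\theta\in j_p^r(F)\cap W$ the subspace $\spt(j_p^rX)=J_p^r$ trivially contains $T_\theta J_p^r$, and a fortiori $\spt(j_p^rX)+\big(T_\theta W\cap T_\theta J_p^r\big)=J_p^r$, which is the transversality in \ref{itm:transjetB} (using the canonical identification $T_\theta J_p^r\cong J_p^r$ noted just before the proposition). For $\text{\ref{itm:transjetB}}\implies\text{\ref{itm:transjetA}}$: I want to compute the differential of $j^r|_{M\times F}$ at a point $(p,f)$ with $j_p^rf\in W$ and show that together with $T_{j_p^rf}W$ it spans $T_{j_p^rf}J^r$. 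Differentiating in the $F$-direction only, the image of $D_{(p,f)}j^r$ restricted to $\{0\}\oplus F$ is, upon projecting along the base to land in the fiber $J_p^r$, exactly $\{j_p^rh : h\in F\}=j_p^r(F)$; since $\spt(j_p^rX)=\overline{j_p^r(\mathcal{H}_X)}$ and $j_p^r(F)\supseteq j_p^r(\mathcal{H}_X)$ is dense in $\spt(j_p^rX)$, and since what we need is an algebraic (finite-dimensional) spanning condition where density suffices to conclude equality of subspaces, the hypothesis \ref{itm:transjetB} gives $j_p^r(F)+\big(T_\theta W\cap T_\theta J_p^r\big)=J_p^r$ for the relevant $\theta$. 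Finally, adding in the base directions (the derivative of $j^r$ in the $M$-direction, which surjects onto the horizontal part of $T J^r$) upgrades this to the full transversality $D_{(p,f)}j^r(T_pM\oplus F)+T_\theta W=T_\theta J^r$, i.e. \ref{itm:transjetA}.

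The main obstacle I anticipate is the careful bookkeeping in the step $\text{\ref{itm:transjetB}}\implies\text{\ref{itm:transjetA}}$: one must correctly split $T_\theta J^r$ into its vertical part $T_\theta J_p^r$ and a complement mapping isomorphically to $T_pM$, verify that the vertical component of $\mathrm{Im}\,D_{(p,f)}j^r|_{\{0\}\oplus F}$ really is $j_p^r(F)$ (and not something smaller, which is why one works with the identification of $T_\theta J_p^r$ with $J_p^r$ and uses that evaluation of the $r$-jet is a linear surjection $E^\infty\to J_p^r$), and then combine the two transversality conditions. The passage from a dense subspace to the whole support is harmless here precisely because all the spaces involved in \ref{itm:transjetB} and the spanning condition are finite-dimensional, so topological closure does not affect the span; I would state this explicitly to avoid any appearance of circularity with Theorem \ref{thm:dtgrf:3}.
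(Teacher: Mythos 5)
Your proposal is correct and follows essentially the same route as the paper: the equivalence $(III)\iff(IV)$ via identifying the covariance matrix of $j_p^rX$ with \eqref{covjeteq}, the trivial implication $(III)\implies(II)$, and $(II)\implies(I)$ by computing $D_{(p,f)}j^r$ in the fiber direction to get $j_p^r(F)=\spt(j_p^rX)$ under the identification $T_\theta J_p^r\cong J_p^r$, then adding the image of $T_pM$ (which, since $j^rf$ is a section of $J^r\to M$, complements the vertical space). Your extra remark that $j_p^r(F)=j_p^r(\overline{\mathcal{H}_X})$ equals $\spt(j_p^rX)$ because finite-dimensional subspaces are closed is a point the paper glosses over, and is a welcome clarification rather than a deviation.
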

\begin{proof}
\ref{itm:transjetB}$\implies$ \ref{itm:transjetA}. Let $f\in F$ such that $\theta=j^r_pf\in W$. Under the identification $T_\theta J^r_p\cong J^r_p$, mentioned above, we have
\be 
\left(D_{(p,f)}j^r\right)(0,g)= \frac{d}{dt}\Big|_0 j^r_p(f+tg)\cong j^r_pg,
\ee
so that, for any $x\in F$, $D_{(p,f)}j^r(T_xF)=j^r_p(F)=\spt(j^r_pX)$. Then for all $(p,f)\in (j^r)^{-1}(W)\cap M\times F$, we have
\be 
\begin{aligned}
D_{(p,f)}j^r\left(T_{(p,f)}(M\times F)\right)+T_\theta W &\supset 
D_{(p,f)}j^r(T_pM)+ \spt(j^r_pX)+T_\theta W\cap J^r_p=\\
& =D_{(p,f)}j^r(T_pM)+ J_p^r=\\ &=T_\theta J^r
 \end{aligned}
\ee
The last equality follows from the fact that the map $j^rf$ is a section of the bundle $J^r\to M$. 

\ref{itm:transjetC}$\implies$ \ref{itm:transjetB}.  Obvious.

\ref{itm:transjetC}$\iff$ \ref{itm:transjetD}. Any chart around $p$ defines a linear isomorphism 
\be 
J_p^r\to \R^{\{\a\colon |\a|\le r\}}, \quad j^r_pf\mapsto \left(\de_af(p)\right)_\a.
\ee
With this coordiante system, the covariance matrix of the Gaussian random vector $j^r_pX$, is exactly the one in $\eqref{covjeteq}$, hence the result follows from the fact that the random Gaussian vector $j^rf$ has full support if and only if its covariance matrix is nondegenerate.
\end{proof}
Given $X\in \g \infty Mk$, we can also consider it as an element of $ \g rMk$ such that $\P\{X\in \Cr\infty Mk\}=1$. We use the notation $\spt_{\mathcal{C}^r}(X)\subset E^r$ to denote the support of the latter, namely 
\be 
\spt_{\mathcal{C}^r}(X)=\overline{\mathcal{H}_X}^{\mathcal{C}^r}.
\ee
\begin{cor}\label{transcor}
Let $X\in \g \infty Mk$, such that $\mathrm{supp}_{\mathcal{C}^{r}}(X)=\Cr {r} Mk$. Then for every submanifold $W\subset J^r(M,\R^k)$, one has
\be 
\P\{j^rX\transv W\}=1.
\ee
\end{cor}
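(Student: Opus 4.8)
The plan is to deduce Corollary \ref{transcor} directly from Theorem \ref{transthm} (equivalently, the already-proven Theorem \ref{thm:transthm} / \ref{transthm2}) by choosing the parameter manifold and the map appropriately. First I would observe that, viewing $X\in\g\infty Mk$ as a $\mathcal{C}^r$ Gaussian field with $\P\{X\in\Cr\infty Mk\}=1$, its $\mathcal{C}^r$-support is $F:=\spt_{\mathcal{C}^r}(X)=\overline{\mathcal{H}_X}^{\mathcal{C}^r}\subset E^r$, and the hypothesis gives $F=\Cr rMk$. Then I would take $P=M$, $N=J^r(M,\R^k)$, and $\Phi=j^r\colon M\times F\to J^r$ the jet-evaluation map $(p,f)\mapsto j^r_pf$ of \eqref{eq:jr}. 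As already noted right after the statement of Theorem \ref{transthm}, this map is ``smooth'' in the required sense: it is linear (hence smooth) in $f$, smooth in $p$, and the total derivative $(p,f,v)\mapsto D_{(p,f)}j^r\,v$ is continuous, being built from finitely many partial derivatives of $f$ evaluated at $p$. So the only thing left to verify is the transversality hypothesis $\Phi\transv W$.

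The key step is therefore to check that $j^r|_{M\times F}\transv W$ when $F=\Cr rMk$, and this is exactly the implication \ref{itm:transjetB}$\impliedby$\ref{itm:transjetC}$\impliedby$\ref{itm:transjetD} of the Proposition preceding the corollary, applied at every point $p\in M$. Concretely: fix $p\in M$ and $f\in F$ with $\theta=j^r_pf\in W$. Using the canonical identification $T_\theta J^r_p\cong J^r_p$, one computes $D_{(p,f)}j^r(0,g)\cong j^r_pg$ for $g\in F$, so $D_{(p,f)}j^r(\{0\}\times T_fF)=j^r_p(F)=\spt(j^r_pX)$. Since $\spt_{\mathcal{C}^r}(X)=E^r$ forces $\spt(j^r_pX)=J^r_p$ (the evaluation $f\mapsto j^r_pf$ pushes forward the support onto the support of the image Gaussian vector, which is then all of $J^r_p$), we get $D_{(p,f)}j^r(T_{(p,f)}(M\times F))\supseteq J^r_p$. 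Finally, since $j^rf$ is a section of the bundle $J^r\to M$, the horizontal directions $D_{(p,f)}j^r(T_pM)$ together with the fiber $J^r_p$ span all of $T_\theta J^r$; hence $D_{(p,f)}j^r(T_{(p,f)}(M\times F))+T_\theta W=T_\theta J^r$, i.e. $j^r\transv W$.

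With the hypotheses of Theorem \ref{transthm} verified, the conclusion $\P\{j^rX\transv W\}=\P\{\phi(X)\transv W\}=1$ follows immediately, where $\phi(f)$ is the map $p\mapsto j^r_pf$. I do not anticipate a serious obstacle here: the corollary is essentially a repackaging of Theorem \ref{transthm} through the Proposition, and the only mildly delicate point is keeping track of the identification $T_\theta J^r_p\cong J^r_p$ and the fact that the jet of a function is a genuine section of $J^r\to M$, so that the vertical subspace $J^r_p$ together with the image of the horizontal tangent space $T_pM$ exhausts $T_\theta J^r$. One should also note, for completeness, that a general submanifold $W\subset J^r$ need not be closed, but this is handled exactly as in the proof of Theorem \ref{transthm} by writing $W$ as a countable union of compact pieces and using countable subadditivity.
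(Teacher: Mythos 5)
Your proof is correct and follows essentially the same route as the paper: the paper's own proof is a one-liner observing that $\mathrm{supp}_{\mathcal{C}^r}(X)=\Cr rMk$ gives condition \ref{itm:transjetC} of the preceding Proposition at every $p$, whence the hypotheses of Theorem \ref{transthm} hold; you simply unpack the Proposition's implication chain explicitly (note only that the chain you want to cite is \ref{itm:transjetA}$\impliedby$\ref{itm:transjetB}$\impliedby$\ref{itm:transjetC}, which is in fact what your ``concretely'' paragraph carries out).
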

\begin{proof}
Clearly $X$ satisfies for every $p\in M$ condition \ref{itm:transjetC} of the proposition above, hence the hypotheses of Theorem \ref{transthm} are satisfied for every $W\subset J^r(M,\R^k)$.
\end{proof}

\appendix
\section{The dual of $E^r$}\label{app:dual}
The purpose of this section is to fill the gap between Definition \ref{def:gauss} of a Gaussian measure on the space $E^r=\mathcal{C}^r(M,\R^k)$ and the abstract definition of Gaussian measures on topological vector spaces, for which we refer to the book \cite{bogachev}. As we already mentioned in Remark \ref{rem:bridge}, the two definitions coincide. In order to see this clearly, one simply have to  understand the topological dual of $E^r$, that is the space $(E^r)^*$ defined as follows.

Let $(E^r)^*$ be the set of all linear and continuous functions $T\colon E^r \to \R$, endowed with the weak-$*$ topology, namely the topology induced by the inclusion $(E^r)^*\subset \R^{E^r}$, when the latter is given the product topology.
\begin{lemma}\label{duallemma}
Let $T\in (E^r)^*$, with $r\in\N\cup\{\infty\}$. There exists a finite set $\mathscr{Q}$ of embeddings $Q\colon \mathbb{D}^m\hookrightarrow M$, a constant $C>0$ and a finite natural number $s\le r$, such that
\be\label{eq:dualiT}
|T(f)|\le C\max_{Q\in\mathscr{Q}}\|f\|_{Q,s},\footnote{The seminorm $\|\cdot\|_{Q,r}$ is defined as in equation \eqref{eq:seminormQR}.}
\ee
for all $f\in E^r$.
As a consequence, denoting $K=\cup_{Q\in\mathscr{Q}}Q(\mathbb{D})$, there is a unique $\hat{T}\in (\Cr sK k)^*$ such that $T(f)=\hat{T}(f|_{K})$ for all $f\in E^r$. 
\end{lemma}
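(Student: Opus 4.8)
The plan is to reduce the lemma to the elementary fact that a continuous linear functional on a locally convex space is dominated by finitely many of the seminorms that define its topology, and then to reinterpret this seminorm bound as one depending only on the restriction of $f$ to a suitable compact set.

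First I would fix a countable family of embeddings $Q_\ell\colon\D^m\hookrightarrow M$ with $\{\textrm{int}(Q_\ell(\D^m))\}_\ell$ covering $M$, so that by \eqref{prodembballeq} the map $\{Q_\ell^*\}_\ell$ identifies $E^r$ topologically with a linear subspace of $(\Cr r{\D^m}k)^L$. The product topology on the target has a neighbourhood basis at $0$ consisting of finite intersections of cylinders over basic neighbourhoods of $0$ in single factors, while the topology of $\Cr r{\D^m}k$ is the usual $\mathcal C^r$-topology, generated by a single $\mathcal C^r$-norm if $r$ is finite and by the $\mathcal C^s$-norms, $s\in\N$, if $r=\infty$; hence a neighbourhood basis of $0$ in $E^r$ is given by the sets $U=\{f\in E^r\colon\|f\|_{Q_{\ell_i},s_i}<\e,\ i=1,\dots,N\}$ with $N\in\N$, $\ell_i\in L$, $s_i\in\N$, $s_i\le r$ and $\e>0$, $\|\cdot\|_{Q_\ell,s}$ being the seminorm \eqref{eq:seminormQR}. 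By continuity of $T$ at $0$, one such $U$ lies inside $T^{-1}((-1,1))$, and the standard homogeneity argument gives $|T(f)|\le\e^{-1}\max_i\|f\|_{Q_{\ell_i},s_i}$ for all $f\in E^r$: if $\max_i\|f\|_{Q_{\ell_i},s_i}=0$ then $tf\in U$ for all $t>0$, forcing $T(f)=0$; otherwise, with $M_f:=\max_i\|f\|_{Q_{\ell_i},s_i}>0$, for each $0<\delta<\e$ the element $(\delta/M_f)f$ lies in $U$, whence $|T(f)|<M_f/\delta$, and $\delta\to\e$ concludes. Taking $s:=\max_i s_i$ (finite, $\le r$), $C:=\e^{-1}$, $\mathscr Q:=\{Q_{\ell_1},\dots,Q_{\ell_N}\}$, and using $\|f\|_{Q,s_i}\le\|f\|_{Q,s}$ for $s_i\le s$, this is precisely \eqref{eq:dualiT}.

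For the second assertion, set $K=\bigcup_{Q\in\mathscr Q}Q(\D^m)$, a compact subset of $M$, and endow $\Cr sKk$ with the natural $\mathcal C^s$-norm — the maximum over $Q\in\mathscr Q$ of the $\mathcal C^s$-norms of the chart representations — for which it is a Banach space and for which the restriction map $\mathrm{res}\colon E^r\to\Cr sKk$, $f\mapsto f|_K$, is linear, continuous, and satisfies $\|\mathrm{res}(f)\|=\max_{Q\in\mathscr Q}\|f\|_{Q,s}$. Then \eqref{eq:dualiT} reads $|T(f)|\le C\,\|\mathrm{res}(f)\|$; in particular $\mathrm{res}(f)=0$ forces $T(f)=0$, so $T$ descends to a well-defined linear functional $\hat T_0$ on $R:=\mathrm{res}(E^r)\subset\Cr sKk$, bounded there by $C$. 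Since a bounded linear functional on a dense subspace of a Banach space has a unique bounded extension to the whole space, both the existence and the uniqueness of $\hat T\in(\Cr sKk)^*$ with $T=\hat T\circ\mathrm{res}$ follow once $R$ is known to be dense in $\Cr sKk$.

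The density of $R$ is the only genuinely nontrivial point, and I would prove it by extension and mollification: given $g\in\Cr sKk$, apply the Whitney (or Seeley) extension theorem in each chart $Q\in\mathscr Q$ — the representations $g\circ Q$ agree to order $s$ on the overlaps $Q(\D^m)\cap Q'(\D^m)$ because they all descend from $g$ — to obtain a $\mathcal C^s$ map on an open neighbourhood of $K$ in $M$, then cut off and glue with a partition of unity subordinate to $\{\textrm{int}(Q_\ell(\D^m))\}_\ell$ to get $\tilde g\in\Cr sMk$ with $\tilde g|_K=g$, and finally mollify $\tilde g$ to approximate it in the $\Cr sKk$-norm by maps in $\Cr\infty Mk\subset E^r$. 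I expect this extension-and-patching step to be the main point demanding care; everything else is routine locally convex and Banach space theory.
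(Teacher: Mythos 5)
Your argument is correct and follows the same two-step route as the paper: first a finite seminorm bound obtained from continuity, then factorization of $T$ through the restriction map to $\mathcal{C}^s(K,\R^k)$ together with a density argument giving existence and uniqueness of $\hat T$. The differences are worth noting. For the seminorm bound you use the direct locally convex argument (a basic neighbourhood $U$ of $0$ with $U\subset T^{-1}((-1,1))$, then homogeneity), whereas the paper argues by contradiction with a normalized sequence $g_n=f_n/(n\max_{\ell\le n}\|f_n\|_{Q_\ell,s_n})$; the two are interchangeable, and yours is arguably cleaner since it handles explicitly the case $\max_i\|f\|_{Q_{\ell_i},s_i}=0$, which the paper's normalization quietly ignores. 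For the density step, your Whitney-extension-and-patching argument is heavier than necessary: the paper \emph{defines} $\mathcal{C}^s(K,\R^k)$ as the image of the restriction map $\mathcal{C}^s(M,\R^k)\to\mathcal{C}^0(K,\R^k)$, so every $g\in\mathcal{C}^s(K,\R^k)$ is by definition already the restriction of a globally defined $\mathcal{C}^s$ map, and only the mollification step survives (the paper itself simply asserts that $\mathcal{C}^r(K,\R^k)$ is dense in $\mathcal{C}^s(K,\R^k)$ without proof, so your willingness to supply an argument is a plus — just make sure the definition of $\mathcal{C}^s(K,\R^k)$ you work with is the paper's, otherwise you are proving a harder statement than needed). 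Finally, the paper takes care to check that $K=\overline{\mathrm{int}(K)}$, which is what makes $j^s f|_K$ well defined and $\mathcal{C}^s(K,\R^k)$ a Banach space for the norm $\max_{Q\in\mathscr{Q}}\|\cdot\|_{Q,s}$; you assert completeness without comment, though strictly speaking only completeness of $\R$ is needed to extend a bounded functional from a dense subspace, so this omission is harmless.
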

Let $K\subset M$ be as in Lemma \ref{duallemma}. The vector space $\Cr sKk$ is, by definition, the image of the restriction map 
\be 
\Cr sMk \to \Cr 0K k, \quad f\mapsto f|_K.
\ee
Denote by $\Omega=\text{int}(K)\subset M$. Notice that the derivatives, of order less than $s$, of a function $f\in\Cr sKk$ are well defined and continuous at points of $\overline{\Omega}$, thus when $K=\overline{\Omega}$, we have a well defined continuous function \be j^sf\colon K\to J^s(K,\R^k)=\{j^s_pf\in J^s(M,\R^k)\colon p\in K\}.\ee In this case ($\overline{\text{int}(K)}=K$) we endow the space $\Cr sKk$ with the topology that makes Lemma \ref{Lemma:14} true with $M=K$. 
Such topology is equivalent to the one defined by the norm $\|\cdot\|_{K,s}$ below (it depends on $\mathscr{Q}$), with which $\Cr sKk$ becomes a Banach space:
\be\label{normKeq}
\|f\|_{K,s}=\max_{Q\in\mathscr{Q}}\|f\|_{Q,s}.
\ee
(Note that, if $K=\overline{\Omega}$, then $\|f\|_{K,s}$ depends only on $f|_{\Omega}$.)
\begin{remark}
When $M$ is an open subset $M\subset\R^m$ and $k=1$, the elements of $(E^\infty)^*$ are exactly the distributions with compact support (in the sense of \cite{schwartz1957}).
\end{remark}
\begin{proof}[Proof of Lemma \ref{duallemma}]
Let $Q_\ell\colon \mathbb{D}\hookrightarrow M$ be a countable family of embeddings such that $g_n\to 0$ in $E^r$ if and only if $\|g_n\|_{Q_\ell,s}\to 0$ for all $\ell\in\N$ and $s\le r$ (it can be constructed as in \eqref{prodembballeq}).
Assume that for all $n\in\N$ there is a function $f_n\in E^r$, such that
\be \label{dualabsurdeq}
|T(f_n)|> n\max_{\ell\le n}\|f_n\|_{Q_\ell,s_n},
\ee
where $s_n:=n$ if $r=\infty$, otherwise $s_n:=r$.
Then the sequence 
\be 
g_n=\frac{f_n}{n\max_{\ell\le N}\|f_n\|_{Q_\ell,s_n}}
\ee
converges to $0$ in $E^r$, indeed $\|g_n\|_{Q_\ell,s}\le \frac1N$ for any fixed $\ell\in \N$ and $s\le r$. Therefore, by the continuity of $T$, we get that $T(g_n)\to 0$. But $|T(g_n)|>1$ according to \eqref{dualabsurdeq}, so we get a contradiction. It follows that there exists $N$ such that for all $f\in E^r$ we have
\be
|T(f)|\le N\max_{\ell\le N}\|f\|_{Q_\ell,s_N}.
\ee
This proves the first part of the Lemma, with $\mathscr{Q}=\{Q_0,\dots,Q_N\}$, $C=N$ and $s=s_N$.

Define $\Omega=\text{int}(K)$. Note that, since $Q(\text{int}(\D^m))\subset \Omega$ for all $Q\in\mathscr{Q}$, if $p\in K\-\Omega$, then $p\in Q(\de\mathbb{D}^m)$ for some $Q\in \mathscr{Q}$ and therefore $p\in \overline{Q(\text{int}(\mathbb{D}^m))}\subset \overline{\Omega}$. This proves that $K=\overline{\Omega}$, hence $\Cr sKk$ is a Banach space with the norm \eqref{normKeq}. 

Let $f,g\in E^r$ be such that $f|_K=g|_K$, then \be 
\begin{aligned}
|T(f)-T(g)|=|T(f-g)|\le C\max_{Q\in\mathscr{Q}}\|f-g\|_{Q,s} =C\|f|_K-g|_K\|_{K,s}=0.
\end{aligned}
\ee
It follows that the function $L\colon \Cr rKk\to \R$ such that $L(f|_\Omega)=T(f)$ for all $f\in E^r$, is well defined and continuous with respect to the norm $\|\cdot\|_{K,s}$. Since $\Cr rKk$ is dense in $\Cr sKk$, there is a unique way to extend $L$ to a bounded linear functional on $\Cr sKk$, that we call $\hat{T}.$
\end{proof}
We recall the following classical theorem from functional analysis (see \cite[Theorem 1.54]{ambrofuscopalla}), which we can use to give a more explicit description of $(E^r)^*$.
\begin{thm}[Riesz's representation theorem]\label{thm:riesz}
Let $K$ be a compact metrizable space. Let $\mathcal{M}(K)$ be the Banach space of Radon measures on $K$ (on a compact set it is the set of finite Borel signed measures), endowed with the total variation norm.
Then the map 
\be 
\mathcal{M}(K)\to \left(\mathcal{C}(K)\right)^*, \quad \mu\mapsto \int_K(\cdot)d\mu 
\ee
is a linear isometry of Banach spaces.
\end{thm}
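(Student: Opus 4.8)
This is a classical result, so the shortest route in the paper is simply to cite \cite{ambrofuscopalla}; if one wants to reproduce the argument, the cleanest path runs through the Daniell integral, and the plan has three stages: reduce to positive functionals, build a measure out of a positive functional, and verify that the resulting correspondence is an isometric bijection.

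First I would reduce to the case of a \emph{positive} functional. Given $L\in\mathcal{C}(K)^*$, set $L^+(f)=\sup\{L(g)\colon g\in\mathcal{C}(K),\ 0\le g\le f\}$ for $f\ge 0$; one checks that $0\le L^+(f)\le\|L\|\,\|f\|_\infty$, that $L^+$ is additive and positively homogeneous on the cone of nonnegative functions, hence extends to a linear functional on all of $\mathcal{C}(K)$, and that $L^-:=L^+-L$ is positive as well. Thus $L=L^+-L^-$ with $L^{\pm}$ positive, and it is enough to represent a positive functional by a finite positive Radon measure and then subtract.

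Second, given a positive linear functional $\Lambda$ on $\mathcal{C}(K)$, I would produce the measure. The key step — and the main obstacle — is the Daniell continuity property: if $f_n\in\mathcal{C}(K)$ and $f_n\downarrow 0$ pointwise, then $\Lambda(f_n)\to 0$. This follows from Dini's theorem: on the compact space $K$ a monotone sequence of continuous functions with continuous pointwise limit converges uniformly, so $\|f_n\|_\infty\to 0$ and $|\Lambda(f_n)|\le\Lambda(\mathbbm{1})\,\|f_n\|_\infty\to 0$. The Daniell--Stone theorem then extends $\Lambda$ to an integral against a measure $\mu$ on the $\sigma$-algebra generated by $\mathcal{C}(K)$, which on a compact metrizable space is the Borel $\sigma$-algebra; one obtains $\Lambda(f)=\int_K f\,d\mu$ for all $f\in\mathcal{C}(K)$ and $\mu(K)=\Lambda(\mathbbm{1})<\infty$. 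Since $K$ is a compact metric space and $\mu$ is finite, $\mu$ is automatically inner and outer regular, hence Radon. (Equivalently, one can avoid Daniell--Stone and build $\mu$ directly in the manner of Riesz--Markov, setting $\mu^*(U)=\sup\{\Lambda(f)\colon 0\le f\le\mathbbm{1},\ \spt f\subset U\}$ for open $U$, extending to an outer measure, and invoking Carathéodory's metric criterion to see that Borel sets are $\mu^*$-measurable; this construction is where the bulk of the work sits in either approach.)

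Finally I would check that $\mu\mapsto\int_K(\cdot)\,d\mu$ is an isometric bijection of $\mathcal{M}(K)$ onto $\mathcal{C}(K)^*$. The bound $|\int_K f\,d\mu|\le\|f\|_\infty\,|\mu|(K)$ gives continuity and $\|\int_K(\cdot)\,d\mu\|\le\|\mu\|$. For the reverse inequality, take the Hahn decomposition $K=P\sqcup Q$ of a signed Radon measure $\mu$; by regularity of $\mu^+$ and $\mu^-$ choose disjoint compacta $C_1\subset P$, $C_2\subset Q$ exhausting the two masses up to $\e$, and a Urysohn function $f\colon K\to[-1,1]$ with $f\equiv 1$ on $C_1$ and $f\equiv-1$ on $C_2$; then $\int_K f\,d\mu\ge|\mu|(K)-4\e$, so $\|\int_K(\cdot)\,d\mu\|\ge\|\mu\|$. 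Surjectivity is exactly the representation obtained in the second stage, applied to $L^+$ and $L^-$ and subtracted, and injectivity is then automatic: a measure whose integral against every continuous function vanishes is represented by the zero functional, hence has norm, and therefore total variation, equal to $0$. The only genuinely delicate point in the whole scheme is the construction of the measure from a positive functional; the reduction to the positive case, the Dini/Daniell continuity, and the isometry are all routine.
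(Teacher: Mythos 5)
The paper does not prove this statement at all: it is recalled as a classical result with the citation \cite[Theorem 1.54]{ambrofuscopalla}, so your opening suggestion to simply cite is exactly the paper's own route. Your supplementary sketch is a correct standard argument --- the Jordan-type decomposition $L=L^+-L^-$ of the functional, Dini's theorem for the Daniell continuity, the automatic regularity of finite Borel measures on a compact metric space, and the Hahn-decomposition/Urysohn argument for the reverse norm inequality are all sound --- but it goes beyond what the paper contains, which is nothing.
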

\begin{thm}\label{thm:Mloc}
Let $\mathcal{M}^r_{loc}$ be the set of all $T\in (E^r)^*$ of the form 
\be 
T(f)=\int_{\mathbb{D}^m}\de_\a(f^j\circ Q) d\mu,
\ee
for some embedding $Q\colon\mathbb{D}^m\hookrightarrow M$, some finite multi-index $\a\in\N^m$ such that $|\a|\le r$, some $j\in \{1,\dots,k\}$ and some $\mu\in \mathcal{M}(\mathbb{D}^m)$.
Then $(E^r)^*=\text{span}\{\mathcal{M}^r_{loc}\}$.
\end{thm}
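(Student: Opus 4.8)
The plan is to combine Lemma~\ref{duallemma}, which reduces an arbitrary $T\in(E^r)^*$ to a bounded functional on a Banach space of $\mathcal{C}^s$-functions on a compact set, with the Riesz representation theorem applied to the spaces $\mathcal{C}^0(\mathbb{D}^m)$. The inclusion $\mathcal{M}^r_{loc}\subset(E^r)^*$ is immediate and should be disposed of first: for a fixed embedding $Q\colon\mathbb{D}^m\hookrightarrow M$, a fixed index $j$, and a fixed multi-index with $|\a|\le r$, the map $f\mapsto\de_\a(f^j\circ Q)$ is continuous from $E^r$ to $\mathcal{C}^0(\mathbb{D}^m)$ by the very definition of the weak Whitney topology, and integration against a finite Radon measure is a bounded functional on $\mathcal{C}^0(\mathbb{D}^m)$; composing and taking finite linear combinations shows $\mathrm{span}\{\mathcal{M}^r_{loc}\}\subset(E^r)^*$. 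So the whole content is the reverse inclusion $(E^r)^*\subset\mathrm{span}\{\mathcal{M}^r_{loc}\}$.

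Given $T\in(E^r)^*$, I would apply Lemma~\ref{duallemma} to obtain a finite set $\mathscr{Q}=\{Q_1,\dots,Q_n\}$ of embeddings $\mathbb{D}^m\hookrightarrow M$, a finite order $s\le r$, and, writing $K=\bigcup_i Q_i(\mathbb{D}^m)=\overline{\mathrm{int}(K)}$, a unique $\hat{T}\in(\mathcal{C}^s(K,\R^k))^*$ with $T(f)=\hat{T}(f|_K)$, bounded for the norm $\|\cdot\|_{K,s}=\max_{Q\in\mathscr{Q}}\|\cdot\|_{Q,s}$ of \eqref{normKeq}. The key observation is then that the ``total jet'' map
\[
\iota\colon\mathcal{C}^s(K,\R^k)\hookrightarrow\bigoplus_{\substack{Q\in\mathscr{Q},\ |\a|\le s\\ 1\le j\le k}}\mathcal{C}^0(\mathbb{D}^m),\qquad\iota(f)=\bigl(\de_\a(f^j\circ Q)\bigr)_{Q,\a,j},
\]
is a linear \emph{isometric} embedding once the (finite) direct sum is given the maximum of the sup-norms, precisely because $\|f\|_{K,s}$ equals the sup-norm of the tuple $\iota(f)$ (the derivatives of order $\le s\le r$ of $f\circ Q$ extend continuously to all of $\mathbb{D}^m$, so the sup over $\mathrm{int}(\mathbb{D}^m)$ agrees with the sup over $\mathbb{D}^m$); injectivity is clear since the $\a=0$ components already recover $f|_{\mathrm{int}(K)}$ and hence $f|_K$. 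In particular $\iota(\mathcal{C}^s(K,\R^k))$ is a linear subspace of the finite direct sum.

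Finally I would extend $\hat{T}\circ\iota^{-1}$, a bounded functional on the subspace $\iota(\mathcal{C}^s(K,\R^k))$, to a bounded functional $\tilde{T}$ on the whole finite direct sum via the Hahn--Banach theorem. Since the dual of a finite direct sum of Banach spaces with the max-norm is the finite direct sum of the duals, $\tilde{T}=\sum_{Q,\a,j}\ell_{Q,\a,j}\circ\pi_{Q,\a,j}$ for suitable $\ell_{Q,\a,j}\in(\mathcal{C}^0(\mathbb{D}^m))^*$, $\pi_{Q,\a,j}$ being the coordinate projections. By Riesz's representation theorem (Theorem~\ref{thm:riesz}) on the compact metrizable space $\mathbb{D}^m$, each $\ell_{Q,\a,j}$ is integration against some $\mu_{Q,\a,j}\in\mathcal{M}(\mathbb{D}^m)$, whence
\[
T(f)=\hat{T}(f|_K)=\tilde{T}\bigl(\iota(f|_K)\bigr)=\sum_{Q\in\mathscr{Q},\ |\a|\le s,\ j}\int_{\mathbb{D}^m}\de_\a(f^j\circ Q)\,d\mu_{Q,\a,j},
\]
a finite linear combination of elements of $\mathcal{M}^r_{loc}$ (recall $s\le r$). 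The only genuinely delicate point is the reduction step, i.e. verifying that $\|\cdot\|_{K,s}$ coincides with the sup-norm of the jet tuple so that $\iota$ is isometric; this is contained in the material around Lemma~\ref{duallemma}, and the remainder is a routine application of Hahn--Banach and Riesz. One should also note that the resulting representation is far from unique, which is why the statement is phrased with the linear span.
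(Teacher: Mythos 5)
Your proof is correct and essentially identical to the paper's: your finite direct sum $\bigoplus_{Q,\a,j}\mathcal{C}^0(\mathbb{D}^m)$ with the max-norm is exactly the space $\mathcal{C}(D)$ that the paper builds from the disjoint union of disks indexed by $(Q,\a,j)$, and both arguments then proceed by Hahn--Banach extension of $\hat{T}$ followed by Riesz representation on each component. (The only cosmetic discrepancy is that, with the paper's convention for $\|\cdot\|_{Q,s}$ taking the Euclidean norm of the $\R^k$-valued derivative, the jet map is a norm-equivalence with constants $1$ and $\sqrt{k}$ rather than an exact isometry, which changes nothing in the argument.)
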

\begin{proof}
Let $T\in (E^r)^*$ and let $\mathscr{Q}$, $s$, $K$,  $C$ and $\hat{T}$ defined as in lemma \ref{duallemma}. Consider the topological space
\be 
D=\mathbb{D}^m\times \mathscr{Q}\times \{\a\in \N^{m}\colon |\a|\le s\}\times\{1,\dots , k\}.
\ee
$D$ is homeomorphic to a finite union of disjoint copies of the closed disk, therefore it is compact and metrizable. There is a continuous linear embedding with closed image 
\be 
\begin{aligned}
\mathcal{J}^s\colon \Cr sKk \hookrightarrow  \mathcal{C}(D), \quad
\mathcal{J}^sf(u,Q,\a,j)=\de_\a (f^j\circ Q)(u).
\end{aligned}
\ee
Indeed $\|\mathcal{J}^sf\|_{\mathcal{C}(D)}\le \|f\|_{K,s}\le \sqrt{k}\|\mathcal{J}^sf\|_{\mathcal{C}(D)}$, if $\|\cdot\|_{K,s}$ is defined as in \eqref{normKeq}. By identifying $\Cr sKk$ with its image under $\mathcal{J}^s$, we can extend $\hat{T}$ to the whole $\mathcal{C}(D)$, using the Hahn-Banach theorem and the extension can then be represented by a Radon measure $\mu\in \mathcal{M}(D)$, by the Riesz theorem \ref{thm:riesz}. 

Denote by $\mu_{Q,\a,j}\in \mathcal{M}(\mathbb{D}^m)$ the restriction of $\mu$ to the connected component $\mathbb{D}^m\times\{Q\}\times \{\a\}\times\{j\}$. Let $T_{Q,\a,j}$ be the element of $\mathcal{M}^r_{loc}$ associated with $Q$, $\a$, $j$ and $\mu_{Q,\a,j}$.
Then, for all $f\in E^r$, we have 
\be \begin{aligned}
T(f)&=\hat{T}(f|_{K})\\
&=\int_D \mathcal{J}^sfd\mu \\
&=\sum_{Q\in\mathscr{Q}, |\a|\le s, j=1,\dots,k}\int_{\mathbb{D}^m\times\{Q\}\times \{\a\}\times\{j\}}\mathcal{J}^sf d\mu \\
&=\sum_{Q\in\mathscr{Q}, |\a|\le s, j=1,\dots,k}\int_{\mathbb{D}^m}\de_\a(f^{j}\circ Q)d\mu_{Q,\a,j}\\
&=\sum_{Q\in\mathscr{Q}, |\a|\le s, j=1,\dots,k}T_{Q,\a,j}(f),
\end{aligned}
\ee
Therefore $T$ is the sum of all the $T_{Q,\a,j}$, thus $T\in \text{span}\{\mathcal{M}^r_{loc}\}$.
\end{proof}
We are now in the position of justifying Remark \ref{rem:bridge}. First, observe that manifold $M$ is topologically embedded in $(E^r)^*$, via the natural association $p\mapsto \delta_p$. We denote by $\delta_M\subset (E^r)^*$ the image of the latter map (it is a closed subset). From this it follows that any abstract Gaussian measure on $E^r$ is also a Gaussian measure in the sense of Definition \ref{def:gauss}. The opposite implication is a consequence of the following Lemma, combined with the fact that the pointwise limit of a sequence of Gaussian random variable is Gaussian.
\begin{cor}\label{thm:deltadense}
$(E^r)^*=\overline{\text{span}\{\delta_M\}}$.
\end{cor}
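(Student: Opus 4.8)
The plan is to show $(E^r)^* = \overline{\operatorname{span}\{\delta_M\}}$ by combining the structural result of Theorem \ref{thm:Mloc} with a density argument: every $T \in (E^r)^*$ is a finite linear combination of functionals of the form $f \mapsto \int_{\mathbb{D}^m} \partial_\alpha(f^j \circ Q)\, d\mu$, so it suffices to approximate each such "elementary" functional $T_{Q,\alpha,j}$ in the weak-$*$ topology of $(E^r)^*$ by elements of $\operatorname{span}\{\delta_M\}$. Recall that weak-$*$ convergence $T_n \to T$ in $(E^r)^*$ means precisely $T_n(f) \to T(f)$ for every fixed $f \in E^r$; this is the notion I will verify.

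First I would reduce to the case of a single elementary functional $T_{Q,\alpha,j}(f) = \int_{\mathbb{D}^m} \partial_\alpha(f^j \circ Q)\, d\mu$ with $\mu \in \mathcal{M}(\mathbb{D}^m)$ a Radon measure. Two successive approximations handle this. \emph{Step one: approximate the measure by finitely supported measures.} Since $\mathbb{D}^m$ is compact metrizable, finitely supported signed measures are weak-$*$ dense in $\mathcal{M}(\mathbb{D}^m)$ against the separable space $\mathcal{C}(\mathbb{D}^m)$ — in particular, for the fixed continuous function $u \mapsto \partial_\alpha(f^j \circ Q)(u)$ (continuous because $f \in E^r$ and $|\alpha| \le r$), we can find $\mu_n = \sum_i c_{i,n}\, \delta_{u_{i,n}}$ with $\int \partial_\alpha(f^j\circ Q)\, d\mu_n \to \int \partial_\alpha(f^j\circ Q)\, d\mu$. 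This reduces matters to approximating $f \mapsto \partial_\alpha(f^j \circ Q)(u_0)$ for a single point $u_0 \in \mathbb{D}^m$, i.e. the functional $\delta_{p_0}^{j} \circ \partial_\alpha$ evaluated in the chart $Q$ at $p_0 = Q(u_0)$. \emph{Step two: approximate the derivative by difference quotients.} Writing $\partial_\alpha$ in the chart $Q$ as an iterated limit of finite-difference operators, $\partial_\alpha(f^j \circ Q)(u_0) = \lim_{h \to 0} \sum_{\beta} \frac{\pm 1}{h^{|\alpha|}}\, f^j(Q(u_0 + h\beta))$, each finite-difference approximant is a finite linear combination of point-evaluations $\delta_{Q(u_0 + h\beta)}^{j}$, hence lies in $\operatorname{span}\{\delta_M\}$, and by Taylor's theorem it converges to $\partial_\alpha(f^j\circ Q)(u_0)$ as $h \to 0$ — and this holds for every fixed $f \in E^r$ simultaneously, which is exactly pointwise (weak-$*$) convergence.

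Chaining these: given $T \in (E^r)^*$, decompose $T = \sum_{\text{finite}} T_{Q,\alpha,j}$ via Theorem \ref{thm:Mloc}, then for each summand pick finitely supported $\mu_n \to \mu$ weak-$*$ and replace each point-mass-times-derivative by a finite-difference combination of $\delta_p$'s, obtaining a net (or sequence, since $\mathbb{D}^m$ is metrizable and $\mathcal{C}(\mathbb{D}^m)$ separable, so the relevant weak-$*$ topology is sequential on bounded sets) $T_n \in \operatorname{span}\{\delta_M\}$ with $T_n(f) \to T(f)$ for all $f$. This shows $(E^r)^* \subseteq \overline{\operatorname{span}\{\delta_M\}}$; the reverse inclusion is trivial since each $\delta_p \in (E^r)^*$ and $(E^r)^*$ is weak-$*$ closed in $\mathbb{R}^{E^r}$ (it is an intersection of closed half-spaces / a linear subspace defined by the closed conditions of linearity, and continuity is automatic once $T$ is a weak-$*$ limit of continuous functionals that are uniformly bounded on a neighborhood — here one uses that all approximants of a fixed $T$ satisfy a common bound of the form \eqref{eq:dualiT}).

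\emph{The main obstacle} I anticipate is the last point: ensuring the approximating net actually lands in $(E^r)^*$, i.e. that the weak-$*$ limit is genuinely continuous, not merely linear. Pointwise limits of continuous linear functionals need not be continuous in a general locally convex space. The resolution is to keep the approximants \emph{uniformly} controlled: by construction every $T_n$ associated to a fixed $T$ can be arranged to satisfy $|T_n(f)| \le C' \max_{Q \in \mathscr{Q}'} \|f\|_{Q,s}$ with $C'$ and $\mathscr{Q}'$ independent of $n$ (the finite-difference operators approximating $\partial_\alpha$ are bounded operators $E^r \to \mathcal{C}(K)$ uniformly in $h$ on a fixed compact region, by the mean value theorem, with norm controlled by the $C^{|\alpha|}$-norm), so the limit inherits the same bound and hence is continuous by Lemma \ref{duallemma}'s criterion. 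This uniform-boundedness bookkeeping is the only genuinely delicate ingredient; the rest is the standard two-step "points, then difference quotients" density argument. Finally, $\delta_M$ being a \emph{closed} subset of $(E^r)^*$ (stated in the excerpt) does not conflict with $\operatorname{span}\{\delta_M\}$ being dense — it is the linear span, not $\delta_M$ itself, that is dense.
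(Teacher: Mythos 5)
Your proof is correct and follows essentially the same route as the paper's: reduce via Theorem \ref{thm:Mloc} to the elementary functionals, approximate the Radon measure by finitely supported measures (the paper's final lemma, proved via Heine--Cantor), and replace each point-derivative $\delta_p^j\circ\partial_\alpha$ by finite differences of point evaluations (the paper's induction on $|\alpha|$). Your worry about continuity of the limit is moot, since the closure in the statement is taken inside $(E^r)^*$ and the given $T$ is already continuous, so the uniform-boundedness bookkeeping you describe is not needed.
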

\begin{proof}
By Theorem \ref{thm:Mloc}, it is sufficient to prove that $\mathcal{M}^r_{loc}\subset \overline{\text{span}\{\delta_M\}}$. To do this, we can restrict to the case $M=\mathbb{D}^m$, $Q=$id and $k=1$.

Observe  that any functional of the type $\delta_p\circ\de_\a$ belongs to $\overline{\text{span}\{\delta_M\}}$. This can be proved by induction on the order of differentiation $|\a|$: if $|\a|=0$ there is nothing to prove, otherwise we have 
\be 
\delta_u\circ\frac{\de}{\de u^j}\circ\de_{\a}= \lim_{n\to \infty}n\left(\delta_{u+\frac{1}{n}e_j}\circ\de_\a-\delta_u\circ\de_\a\right)\in \overline{\text{span}\{\delta_M\}}.
\ee
 Note also that any $T^r\in\mathcal{M}^r_{loc}$ is of the form $T^0\circ \de_\a$ for some $T^0\in \mathcal{M}^0_{loc}$ and $|\a|\le r$ and, together with the previous consideration, this implies that it is sufficient to prove the theorem in the $r=0$, so that we can conclude with the following lemma
 \begin{lemma}
Let $K$ be a compact metric space. The subspace $\text{span}\{\delta_K\}$ is sequentially dense (and therefore dense) in $\mathcal{M}(K)$, with respect to the weak-$*$ topology on $\mathcal{M}(K)=\mathcal{C}(K)^*$.
\end{lemma}
Let $\mu$ be a Radon measure on $K$. Define for any $n\in\N$ a partition $\{A^n_i\}_{i\in I_n}$ of $K$ in Borel subsets of diameter smaller that $\frac1n$ and let $a^n_i\in A^n_i$. Define $t_n\in \text{span}\{\delta_K\}$ as
\be 
t_n=\sum_{i\in I_n}\mu(A^n_i)\delta_{a^n_i}.
\ee
Given $f\in \mathcal{C}(K)$, we have 
\be 
\begin{aligned}\label{heinecantoreq}
\left|\int_{K}f d\mu-t_n(f)\right|&\le \sum_{i\in I_n}\left|\int_{A^n_i}f-f(a^n_i)d\mu\right| \\
&\le |\mu|(K)\sup_{|x-y|\le \frac1n}|f(x)-f(y)|.
\end{aligned}
\ee
By the Heine-Cantor theorem, $f$ is uniformly continuous on $K$, hence the last term in \eqref{heinecantoreq} goes to zero as $n\to \infty$. Therefore, for every $f\in \mathcal{C}(K)$ we have that $t_n(f)\to\int_Kfd\mu$, equivalently: $t_n\to \mu$ in the weak-$*$ topology.
\end{proof}
We conclude this Appendix with an observation on the case $r=\infty$.
\begin{prop}
Let $T\in \mathcal{M}^\infty_{loc}$. Then the associated measure $\mu$ can be assumed to be of the form $\rho du$ for some $\rho\in L^{\infty}(\mathbb{D})$.
\end{prop}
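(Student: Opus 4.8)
The plan is to regard $T$ as a compactly supported distribution of finite order and to recover the measure $\mu$ of Theorem \ref{thm:Mloc} as a high--order derivative of its \emph{distribution function}; the hypothesis $r=\infty$ is exactly what makes the needed $m$ extra derivatives harmless. Writing $T(f)=\int_{\D^m}\de_\a(f^j\circ Q)\,d\mu$ as in the definition of $\mathcal{M}^\infty_{loc}$, I set $H\colon\R^m\to\R$, $H(x)=\mu\big(\textstyle\prod_{i=1}^m(-\infty,x_i]\big)$. Since $\mu$ is a finite signed measure, $\|H\|_{L^\infty}\le|\mu|(\D^m)<\infty$, so $H\in L^\infty$, and an elementary Fubini computation shows $\de_{(1,\dots,1)}H=\mu$ in $\mathscr{D}'(\R^m)$.

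First I would reduce to the case in which $\spt\mu$ is compactly contained in $\textrm{int}\,\D^m$. Choose an embedding $\widetilde Q\colon\D^m\hookrightarrow M$ whose image is a slightly larger disk with $Q(\D^m)\subset\widetilde Q(\textrm{int}\,\D^m)$, so that $Q=\widetilde Q\circ\sigma$ for an embedding $\sigma\colon\D^m\hookrightarrow\textrm{int}\,\D^m$. By the Faà di Bruno formula $\de_\a(f^j\circ Q)=\sum_{|\b|\le|\a|}c_\b\cdot\big(\de_\b(f^j\circ\widetilde Q)\big)\circ\sigma$ with $c_\b\in\mathcal{C}^\infty(\D^m)$, hence $T(f)=\sum_\b\int_{\D^m}\de_\b(f^j\circ\widetilde Q)\,d\nu_\b$ with $\nu_\b=\sigma_*(c_\b\mu)$ a finite measure supported in $\sigma(\D^m)\Subset\textrm{int}\,\D^m$. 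So it suffices to treat a single $\int_{\D^m}\de_\a(f^j\circ Q)\,d\mu$ with $\spt\mu\Subset\textrm{int}\,\D^m$.

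Now fix $\chi\in\mathcal{C}^\infty_c(\textrm{int}\,\D^m)$ with $\chi\equiv1$ on a neighborhood of $\spt\mu$. Since $f^j\circ Q$ extends to a smooth function on a neighborhood of $\D^m$ (as $Q$ embeds the closed disk), the function $g:=\chi\cdot(f^j\circ Q)$ lies in $\mathcal{C}^\infty_c(\R^m)$, and as all positive--order derivatives of $\chi$ vanish on $\spt\mu$ we get $\de_\a g=\de_\a(f^j\circ Q)$ on $\spt\mu$, whence $T(f)=\int\de_\a g\,d\mu=\langle\de_{(1,\dots,1)}H,\de_\a g\rangle$. Integrating by parts $m$ times (legitimate because $g\in\mathcal{C}^\infty_c$ and $H\in L^\infty$) and expanding $\de_{\a+(1,\dots,1)}g$ by Leibniz, one obtains
\be
T(f)=\sum_{\gamma\le\a+(1,\dots,1)}(-1)^m\binom{\a+(1,\dots,1)}{\gamma}\int_{\D^m}\de_{\a+(1,\dots,1)-\gamma}(f^j\circ Q)(u)\,\big(\de_\gamma\chi\big)(u)\,H(u)\,du .
\ee
Each summand has exactly the shape appearing in $\mathcal{M}^\infty_{loc}$, but with the measure replaced by $\rho\,du$ where $\rho=(-1)^m\binom{\a+(1,\dots,1)}{\gamma}(\de_\gamma\chi)H\in L^\infty(\D^m)$ (bounded, supported in $\spt\chi\subset\D^m$). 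Together with Theorem \ref{thm:Mloc} this shows that $(E^\infty)^*$ is spanned by functionals $f\mapsto\int_{\D^m}\de_\b(f^j\circ Q)\,\rho\,du$ with $\rho\in L^\infty(\D^m)$, which is the sense in which the measure attached to an element of $\mathcal{M}^\infty_{loc}$ may be taken absolutely continuous with bounded density.

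The delicate point is the boundary $\de\D^m$: one cannot in general integrate $\mu$ to a bounded antiderivative vanishing on $\de\D^m$ — the total mass of $\mu$ obstructs this, which is also the reason one is forced into a finite sum rather than a single term. This is precisely what the reduction to interior support and the cutoff $\chi$ take care of, after which the integration by parts produces no boundary contributions; the remaining ingredients (smoothness of $f^j\circ Q$ past $\de\D^m$, the Faà di Bruno and Leibniz bookkeeping) are routine.
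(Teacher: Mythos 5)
Your argument is correct in its core mechanism but takes a genuinely different route from the paper's. The paper proves the estimate $|T(\f)|\le|\mu|(\D^m)\,\|\de_{\a+e}\f\|_{L^1}$ with $e=(1,\dots,1)$, regards $T$ as a bounded functional on the subspace $\{\de_{\a+e}\f\}\subset L^1(\R^m)$, and invokes Hahn--Banach together with $(L^1)^*=L^\infty$ to produce a (non-constructive) density; the output is a \emph{single} term $\int\de_{\a+e}\f\,\rho\,du$. You instead exhibit the density explicitly as the cumulative distribution function $H$ of $\mu$, multiplied by derivatives of a cutoff, via $\de_{e}H=\mu$ and integration by parts. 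Both arguments raise the order of differentiation by $e$, so both exploit $r=\infty$ in the same essential way; yours is constructive, which is a real advantage, but it yields a \emph{finite sum} of functionals of the required shape rather than a single one. Strictly speaking you prove that $T$ lies in $\mathrm{span}$ of such functionals, not the literal statement; since the proposition is only ever used through $\mathrm{span}\{\mathcal{M}^\infty_{loc}\}=(E^\infty)^*$, this weaker form is equally serviceable, but you should say so explicitly.

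There is one genuine gap: the reduction to $\spt\mu\Subset\mathrm{int}\,\D^m$ requires an embedding $\widetilde Q\colon\D^m\hookrightarrow M$ with $Q(\D^m)\subset\widetilde Q(\mathrm{int}\,\D^m)$. When $Q(\D^m)$ avoids $\de M$ this exists, because a smooth embedding of the closed disk extends to an embedding of a slightly larger one; but the paper explicitly allows manifolds with boundary and charts $Q$ whose image meets $\de M$ (see the footnote to \eqref{prodembballeq}), and for such $Q$ no enlargement $\widetilde Q$ inside $M$ can exist, since points of $Q(\D^m)\cap\de M$ cannot lie in $\widetilde Q(\mathrm{int}\,\D^m)$. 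You need either to handle that case separately (for instance by doubling $M$ along its boundary, or by pairing $\de_e H$ directly with a compactly supported smooth extension of $f^j\circ Q$ to $\R^m$ and checking that no boundary contributions arise), or to restrict the claim to interior charts. The remaining steps (the identity $\de_e H=\mu$ in $\mathscr{D}'(\R^m)$, the Fa\`a di Bruno decomposition with smooth bounded coefficients $c_\b$ on the compact disk, and the Leibniz expansion) are sound.
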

\begin{proof}
Let $T\in\mathcal{M}^\infty_{loc}$ be associated with $Q$, $\a$, $\mu$. 
It is not restrictive to assume $M=\mathbb{D}^m$, $Q=id$ and $k=1$.
Let us extend $T$ to the space $\mathcal{C}^\infty_c(\R^m)$ by declaring
\be 
T(\f)=T(\f|_{\mathbb{D}^m})=\int_{\mathbb{D}^m}\de_\a\f d\mu.
\ee
Let $e\in\N^m$ be the multi-index $e=(1,\dots,1)$. Note that for all $\f\in\mathcal{C}^\infty_c(\R^m)$
\be 
\max_{u\in \R^m} \|\de_\a \f(u)\|\le \int_{\R^m}\left|\de_{\a+e}(u) \f\right|du.
\ee
Define $V\subset L^1(\R^m)$ as $ 
V=\{\de_{\a+e}\f\colon \f\in \mathcal{C}^\infty_c(\R^m)\}$, and let $\lambda\colon V\to\R$ be the liner function defined by $\lambda(\de_{\a+e}\f)=T(\f)$. Then $\lambda$ is a (well defined) linear and bounded functional on $(V,\|\cdot\|_{L^1})$, since 
\be 
\begin{aligned}
|\lambda(\de_{\a+e}\f)|&=|T(\f)|
=|\int_{\mathbb{D}^m} \de_\a \f d\mu|
\le |\mu|(\mathbb{D}^m)\max_{\R^m} \|\de_\a \f\|
\le |\mu|(\mathbb{D}^m)\|\de_{\a+e}\f\|_{L^1}.
\end{aligned}
\ee
The Hahn-Banach theorem, implies that $\lambda$ can be extended to a continuous linear functional $\Lambda$ on the whole space $L^1(\R^m)$ and hence it coincides, as a distribution, with a function $\rho\in L^\infty(\D^m)=L^1(\D^m)^*$.
In particular, for all $\f\in \mathcal{C}^\infty_c(\R^m)$, we have that 
\be 
\begin{aligned}
T(\f)&= \lambda(\de_{\a+e} \f)
&= \int_{\R^n}\de_{\a+e}\f(u)\rho(u) du.\\
\end{aligned}
\ee
\end{proof}
%
%
%
%
%
%
\section{The representation of Gaussian Random Fields}\label{app:rgrf}

The purpose of this section is to prove that every GRF $X\in \g rMk$ (where $r$ may be infinite) is of the form \eqref{eq:KarLo}. This fact is well-known in the general theory of Gaussian measures on Fréchet spaces (see \cite{bogachev}) as the Karhunen-Loève expansion. Here we present and give a proof of such result using our language, with the scope of making the exposition more complete and self-contained. Our presentation is analogous to that in the Appendix of \cite{NazarovSodin2} which, however, treats only the case of continuous Gaussian random functions, namely GRFs in $\g 0M{}$.

Given a Gaussian random field $X\in \mathcal{G}^r(M, \R^k)$, we define its Cameron-Martin Hilbert space $\mathcal{H}_X\subset E^r$ to be the image of the map $\rho_X$, as we did in Section \ref{sec:PT3}.
\be \rho_X\colon \Gamma_X=\overline{\textrm{span}\{X^j(p),\,p\in M\}}^{L^2(\Omega, \mathfrak{S}, \PP)} \to E^r
\ee
\be \rho_X(\gamma)=\E\left(X(\cdot)\gamma\right)=\left(\langle X^1(\cdot), \gamma\rangle_{L^2(\Omega, \mathfrak{S}, \PP)},\dots,\langle X^k(\cdot), \gamma\rangle_{L^2(\Omega, \mathfrak{S}, \PP)}\right).\ee
(This is consistent with the more abstract theory from \cite{bogachev} because of Proposition \ref{prop:CamMa}.)
Note that $\mathcal{H}_X$ is separable, since $M$ is, hence it has a countable Hilbert-orthonormal basis $\{h_n\}_{n\in \N}$, corresponding via $\rho_X$ to a Hilbert-orthonormal basis $\{\xi_n\}_{n\in \N}$ in $\Gamma_X$. This means that for any $p$ and $j$, one has $h_n^j(p)=\langle X^j(p),\xi_n\rangle$, namely that $h_n^j(p)$ is precisely the $n^{th}$ coordinate of $X^j(p)$ with respect to the basis $\{\xi_n\}_{n\in\N}$. In other words: 
\be \label{repreq}
X(p)=\lim_{n\to \infty} \sum_{m\leq n}\xi_m h_m(p),
\ee
where the limit is taken in $L^2\Prob^k$. 
In particular, since the $L^2$ convergence of random variables implies their convergence in probability, we have that
\be\label{eq:rep}
\lim_{n\to \infty}\P\left\{\left|\sum_{m>n}\xi_m h_m(p)\right|>\e\right\}=0.
\ee 
\begin{thm}[Representation theorem]\label{thm:rep} Let $X\in \mathcal{G}^r(M, \R^k)$ be a GRF, with $r\in\N\cup\{\infty\}$. For every Hilbert-orthonormal basis $\{h_n\}_{n\in \N}$ of $\mathcal{H}_X$, there exists a sequence $\{\xi_n\}_{n\in N}$ of independent, standard Gaussians such that the series $\sum_{n\in \N}\xi_nh_n$ converges\footnote{Given a sequence $\{x_n\}_{n\in \N}\subset E$, the sentence ``the series $\sum_{n\in \N}x_n$ converges in $E$ to $x$'' means that $s_N=\sum_{n\leq N}x_n$ converges in $E$ to $x$ as $N\to \infty$.} in $E^r$ to $X$ almost surely.
\end{thm}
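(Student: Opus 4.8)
The plan is to recognize the partial sums $S_N:=\sum_{n\le N}\xi_n h_n$ as a closed martingale with values in $E^r$ and to read off the almost sure convergence from the vector-valued martingale convergence theorem; since $E^r$ is a non-normable Fréchet space, I would first reduce the target to countably many separable Banach spaces. Two preliminary observations set the stage. First, because $\{h_n\}$ is Hilbert-orthonormal in $\mathcal{H}_X$ and $\rho_X\colon\Gamma_X\to\mathcal{H}_X$ is an isometry (Propositions \ref{propo:injection} and \ref{prop:CamMa}), the preimages $\xi_n:=\rho_X^{-1}(h_n)$ form an orthonormal system inside the Gaussian subspace $\Gamma_X\subset L^2\Prob$; being jointly Gaussian and pairwise orthogonal they are independent, and being centered unit vectors each $\xi_n\sim N(0,1)$. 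Second, by \eqref{repreq} we have $X^j(p)=\sum_n h_n^j(p)\,\xi_n$ in $L^2\Prob$ for every $p$ and $j$, so $X^j(p)\in\overline{\mathrm{span}\{\xi_n\}}=\Gamma_X$ and $S_N(p)\to X(p)$ in probability for each $p$ (equation \eqref{eq:rep}).

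For the reduction, recall from \eqref{prodembballeq} that $E^r$ embeds topologically into a countable power of $\mathcal{C}^r(\D^m,\R^k)$; when $r=\infty$ one composes this further with the embeddings $\mathcal{C}^\infty(\D^m,\R^k)\hookrightarrow\prod_{s\in\N}\mathcal{C}^s(\D^m,\R^k)$. Either way we obtain a topological embedding $E^r\hookrightarrow\prod_{i\in\N}B_i$ into a countable product of separable Banach spaces $B_i=\mathcal{C}^{s_i}(\D^m,\R^k)$, along which convergence in $E^r$ is equivalent to convergence in every factor. It is therefore enough to show that, for each fixed $B=B_i$, the restricted sequence converges almost surely in $B$ to the restriction of $X$; the theorem then follows by intersecting countably many almost sure events. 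Fix $B$, let $\pi\colon E^r\to B$ be the restriction map, and write $\tilde X=\pi(X)$, $\tilde S_N=\pi(S_N)$, $\tilde h_n=\pi(h_n)$.

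The core of the argument is the identity $\tilde S_N=\E[\tilde X\mid\mathcal{F}_N]$, with $\mathcal{F}_N=\sigma(\xi_1,\dots,\xi_N)$. Given $T\in B^*$, set $\widehat T:=T\circ\pi\in(E^r)^*$. By Theorem \ref{thm:Mloc}, $\widehat T$ is a finite linear combination of functionals $f\mapsto\int_{\D^m}\de_\a(f^j\circ Q)\,d\mu$; applying such a functional to $\rho_X(\gamma)$ and using Proposition \ref{Ltwocont}, together with the fact that each $\de_\a\gamma_X^j(q)$ lies in the closed subspace $\Gamma_X$ (being an $L^2$-limit of difference quotients of $\gamma_X$), one obtains after summing the finitely many terms the identity
\be
\widehat T(\rho_X(\gamma))=\langle\,\widehat T(X),\gamma\,\rangle_{L^2}\quad\text{for every }\gamma\in\Gamma_X,
\ee
in which, in particular, $\widehat T(X)\in\Gamma_X$. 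Hence $T(\tilde h_n)=\langle\widehat T(X),\xi_n\rangle_{L^2}$, and writing $\widehat T(X)=\sum_n\langle\widehat T(X),\xi_n\rangle\xi_n$ in the orthonormal basis $\{\xi_n\}$ of $\Gamma_X$ while noting that the tail $\sum_{n>N}\langle\widehat T(X),\xi_n\rangle\xi_n$ is independent of $\mathcal{F}_N$ (it sits in the Gaussian subspace orthogonal to $\xi_1,\dots,\xi_N$), gives $T(\tilde S_N)=\sum_{n\le N}\langle\widehat T(X),\xi_n\rangle\xi_n=\E[\widehat T(X)\mid\mathcal{F}_N]=\E[T(\tilde X)\mid\mathcal{F}_N]$. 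Since $\tilde S_N$ is $\mathcal{F}_N$-measurable, $B$-valued and integrable (a finite combination of Gaussian vectors) and $\tilde X\in L^1(\Omega;B)$ by Fernique's theorem (see \cite{bogachev}), this identity for all $T\in B^*$ shows $\tilde S_N=\E[\tilde X\mid\mathcal{F}_N]$. As $\tilde X$ is $\mathcal{F}_\infty$-measurable, $\mathcal{F}_\infty=\sigma(\xi_n:n\in\N)$ (it is determined by the countably many variables $X^j(p)$, $p$ in a countable dense set, each lying in $\Gamma_X$), the vector-valued martingale convergence theorem for closed martingales yields $\tilde S_N=\E[\tilde X\mid\mathcal{F}_N]\to\E[\tilde X\mid\mathcal{F}_\infty]=\tilde X$ almost surely in $B$. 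Ranging over the $B_i$ completes the proof. (Equivalently, once one knows $T(\tilde S_N)\to T(\tilde X)$ in probability for all $T\in B^*$, one could conclude via the It\^o--Nisio theorem applied to the independent symmetric summands $\xi_n\tilde h_n$.)

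The step I expect to be the main obstacle is the third one: funneling the description of $(E^r)^*$ from Theorem \ref{thm:Mloc} through this computation, and in particular verifying that $\Gamma_X$ is stable under the differentiations occurring in $\widehat T$, so that $\widehat T(X)$ genuinely returns to $\Gamma_X$. This stability is exactly what forces the martingale limit to be $X$ itself, rather than merely some field with law $[X]$. By contrast, the convergence input (closed vector martingales, or It\^o--Nisio) is standard, and the reduction to separable Banach factors is routine once the embedding \eqref{prodembballeq} is available.
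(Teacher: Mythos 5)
Your proof is correct, but it takes a genuinely different route from the paper's. The paper's proof is very short: it takes the same $\xi_n=\rho_X^{-1}(h_n)$ and the same pointwise convergence in probability from \eqref{eq:rep}, and then invokes Theorem \ref{converepr} (proved via the It\^o--Nisio theorem \ref{itonisiothm} after the same reduction to the Banach factors $\mathcal{C}^r(\D^m,\R^k)$ that you perform), using as test functionals only the point evaluations $\delta_p^j$ --- which suffices because the stated version of It\^o--Nisio only requires the test family to generate the Borel $\sigma$-algebra (Lemma \ref{borelemm1}), not to be the whole dual. You instead realize $S_N$ as the closed martingale $\E[\tilde X\mid\mathcal{F}_N]$ and conclude by vector-valued martingale convergence; this forces you to pay for the identity $T(\tilde S_N)=\E[T(\tilde X)\mid\mathcal{F}_N]$ over \emph{all} of $B^*$, hence to route through the description of $(E^r)^*$ in Theorem \ref{thm:Mloc} and to verify that $\widehat T(X)\in\Gamma_X$ (where, as you note, the integral against $\mu$ should really be treated as an $L^2$-valued Bochner integral, shown a.s.\ equal to the pathwise integral by Fubini --- a point worth one more sentence), plus Fernique's theorem for integrability of $\tilde X$ in $B$. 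What your approach buys is independence from It\^o--Nisio and a transparent mechanism identifying the limit as $X$ itself; what the paper's approach buys is economy, since the refined It\^o--Nisio hypothesis lets it bypass the dual-space analysis entirely and the identification of the limit is already built into condition (3) of Theorem \ref{itonisiothm}. Your closing parenthetical correctly identifies the paper's actual argument as the alternative.
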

We will a convergence criterion for a random series (see Theorem \ref{converepr}). It essentially follows from the Ito-Nisio theorem, which we recall for the reader's convenience.
\begin{thm}[Ito-Nisio]\label{itonisiothm}
Let $E$ be a separable real Banach space. Let $M\subset E^*$ be such that the family of sets of the form $\{f\in E \ | \ \langle p,f \rangle \in A\}$, with $A\in \mathcal{B}(\R)$ and $p\in M$, generates the Borel $\sigma$-algebra of $E$. Let $\{x_n\}_{n\in \N}$ be independent symmetric random elements of $E$, define
\be 
X_n=\sum_{m\leq n} x_m.
\ee
Then the following statements are equivalent:
\begin{enumerate}
\item $X_n$ converges almost surely;
\item $\{X_n\}_{n\in \N}$ is tight in $\mathscr{P}(E)$;
\item There is a random variable $X$ with values in $E$ such that $\langle p, X_n \rangle \to \langle p, X \rangle$ in probability for all $p\in M$.
\end{enumerate}
 
\end{thm}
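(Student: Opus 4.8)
The plan is to prove the cycle of implications by isolating two classical workhorses — Lévy's maximal inequality for sums of independent symmetric summands, and the manipulation of characteristic functionals together with the hypothesis that the cylinder sets built from $M$ generate the Borel $\sigma$-algebra — and then assembling them. The trivial directions are $(1)\Rightarrow(2)$ and $(1)\Rightarrow(3)$: if $X_n\to X$ almost surely then $[X_n]\nrw[X]$, so $\{[X_n]\}$ is relatively compact in $\mathscr{P}(E)$ and hence tight by Prohorov's theorem (here $E$ is Polish), giving $(2)$; and almost sure convergence implies convergence in probability, which composed with the continuous functionals $\langle p,\cdot\rangle$ yields $(3)$. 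The work is therefore in the two remaining implications $(2)\Rightarrow(1)$ and $(3)\Rightarrow(2)$.

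First I would record two lemmas. Lévy's inequality states that for independent symmetric $x_1,\dots,x_n$ with $S_k=\sum_{i\le k}x_i$ one has $\P(\max_{k\le n}\|S_k\|>t)\le 2\,\P(\|S_n\|>t)$ for every $t>0$; applied to the increments $x_{m+1},\dots,x_n$ it controls $\max_{m<k\le n}\|S_k-S_m\|$ by $\|S_n-S_m\|$. Its consequence, which I would state as a lemma, is that if $\{S_n\}$ is Cauchy in probability for the norm of $E$, then it converges almost surely: choose $n_j$ with $\P(\|S_{n_{j+1}}-S_{n_j}\|>2^{-j})<2^{-j}$, apply Lévy's inequality blockwise together with Borel--Cantelli to get $\max_{n_j\le k\le n_{j+1}}\|S_k-S_{n_j}\|\to0$ a.s., and combine with the a.s. convergence of the subsequence $S_{n_j}$. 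The second lemma is a separation statement: if $\{Y_n\}$ is tight in $E$ and $\langle p,Y_n\rangle\to0$ in probability for every $p\in M$, then $Y_n\to0$ in probability. Indeed, any subsequence of $\{Y_n\}$ has, by tightness and Prohorov, a further subsequence converging in law to some $\nu$; the projections force $\langle p,\cdot\rangle_*\nu=\delta_0$ for all $p\in M$, and since the cylinder sets $\{\langle p,\cdot\rangle\in A\}$ generate the Borel $\sigma$-algebra (so that $M$ separates points, and, by separability of $E$, a countable subfamily already does), one gets $\nu=\delta_0$; every subsequential limit being $\delta_0$, the whole sequence converges in law, hence in probability, to $0$.

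With these in hand, $(2)\Rightarrow(1)$ runs as follows. Tightness of $\{S_n\}$ forces tightness of the increment family $\{S_n-S_m\}_{m<n}$, because for a compact $K$ with $\inf_n\P(S_n\in K)>1-\e$ the compact set $K-K$ satisfies $\P(S_n-S_m\in K-K)>1-2\e$. By independence and symmetry the characteristic functional factorizes as $\E e^{i\langle p,S_n\rangle}=\prod_{j\le n}\phi_j(p)$ with $\phi_j(p)=\E\cos\langle p,x_j\rangle\in[-1,1]$ real; since $|\E e^{i\langle p,S_n\rangle}|$ is nonincreasing in $n$ and a subsequential weak limit $\mu$ exists, the full sequence of characteristic functionals converges to $\hat\mu(p)$ for every $p\in M$. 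As $\hat\mu$ is the characteristic functional of a genuine Radon measure it is continuous and equal to $1$ along rays through $0$, so $\hat\mu(tp)\ne0$ for small $t$; the factorization then gives $\E e^{it\langle p,S_n-S_m\rangle}\to1$, hence $\langle p,S_n-S_m\rangle\to0$ in probability for every $p\in M$. The separation lemma applied to $\{S_n-S_m\}$ yields $S_n-S_m\to0$ in probability, i.e. $\{S_n\}$ is Cauchy in probability, and Lévy's lemma upgrades this to almost sure convergence, proving $(1)$.

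The main obstacle is $(3)\Rightarrow(2)$, i.e. extracting tightness from the purely one-dimensional information in $(3)$; this is exactly where infinite dimensionality bites, since a bounded sequence need not be relatively compact and a naive Lévy continuity theorem fails for general Banach spaces. Here I would again lean on Lévy's inequality, now to pass from the assumed control of the functionals $\langle p,S_n\rangle$ and the existence of a genuine $E$-valued limit $X$ to stochastic boundedness in norm of the maxima $\max_{k\le n}\|S_k\|$, and then exploit that $[X]$ is Radon to exhibit, for each $\e$, a single compact set capturing all the $S_n$ up to $\e$. Note that $(3)$ gives at once that $\langle p,S_n-S_m\rangle\to0$ in probability (each $\langle p,S_n\rangle$ being Cauchy in probability), so once tightness of the increments is in place the conclusion follows exactly as above via the separation lemma and Lévy's lemma; the delicate point, and the crux of the whole theorem, is precisely this tightness, which I expect to be the step demanding the most care.
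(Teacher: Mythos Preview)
The paper does not prove this theorem at all: it simply quotes the result from It\^o--Nisio's original paper and remarks that ``the same proof works in the slightly weaker assumptions'' (i.e.\ with the separating family $M\subset E^*$ in place of the full dual). So there is nothing to compare against --- your write-up is already far more detailed than what the paper offers.

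That said, your sketch has one genuine gap, precisely at the step you yourself flag. For $(3)\Rightarrow(2)$ you propose to ``lean on L\'evy's inequality \dots\ to pass \dots\ to stochastic boundedness in norm of the maxima $\max_{k\le n}\|S_k\|$''. L\'evy's inequality only gives you $\P(\max_{k\le n}\|S_k\|>t)\le 2\,\P(\|S_n\|>t)$, so it transfers tightness from $\{S_n\}$ to the maxima, not the other way round; it cannot manufacture norm control from the one-dimensional data in $(3)$. The actual It\^o--Nisio mechanism is different and worth knowing. From $(3)$ one has, for each $p\in M$, that $\langle p,X-S_n\rangle$ is the limit in probability of $\langle p,S_m-S_n\rangle$ as $m\to\infty$; since $S_m-S_n$ is measurable with respect to $\sigma(x_{n+1},x_{n+2},\dots)$ and hence independent of $S_n$, the same holds in the limit, and because the cylinders over $M$ generate $\mathcal B(E)$ this upgrades to full independence of $X-S_n$ and $S_n$ as $E$-valued variables. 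Thus $[X]=[S_n]*[X-S_n]$. Now pick a symmetric compact $K$ with $[X](K^c)<\e^2$; Fubini gives a point $y_n$ with $[S_n](K-y_n)>1-\e$, and symmetry of $[S_n]$ yields $[S_n]\bigl((K-y_n)\cap(K+y_n)\bigr)>1-2\e$, whence $[S_n]\bigl(\tfrac12(K+K)\bigr)>1-2\e$ with $\tfrac12(K+K)$ compact and independent of $n$. This is the tightness you need; the rest of your argument (separation lemma, Cauchy in probability, L\'evy's upgrade to a.s.\ convergence) then goes through as written.

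One minor point in your $(2)\Rightarrow(1)$: the claim that ``the full sequence of characteristic functionals converges'' does not follow merely from the monotonicity of $|\E e^{i\langle p,S_n\rangle}|$; it is cleaner to argue directly on the increments, using that $|\hat\mu(tp)|>0$ for small $t$ forces $\prod_{j>n}|\phi_j(tp)|\to 1$, hence $t\langle p,S_n-S_m\rangle\to 0$ in law for all small $t$, which suffices.
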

\begin{remark}
In the original paper \cite{ItoNisio}, the theorem is stated with the hypothesis that $M=E^*$, but the same proof works in the slightly weaker assumptions of Theorem \ref{itonisiothm}.
\end{remark}

\begin{thm}\label{converepr}
Let $x_n\in \g rMk$, for all $n\in \N$. Assume that $x_n$ are independent and consider the sequence $X_n$ of GRFs defined as
\be X_n=\sum_{j\leq n} x_j.\ee 
The following conditions are equivalent.
\begin{enumerate}
\item $X_n$ converges in $\Cr rMk$ almost surely.
\item Denoting by $\mu_n$ the measure associated to $X_n$, we have that $\{\mu_{n}\}_{n\in \N}$ is relatively compact in $\G(E^r)$. 
\item There is a random field $X$ such that, for all $p\in M$, the sequence $\{X_n(p)\}_{n\in \N}$ converges in probability to $X(p)$.
\end{enumerate}
\end{thm}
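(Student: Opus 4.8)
The plan is to derive Theorem~\ref{converepr} from the Itô--Nisio theorem (Theorem~\ref{itonisiothm}); the only obstruction is that Itô--Nisio is stated for separable \emph{Banach} spaces while $\Cr rMk$ is merely a separable Fréchet space, so the substance of the argument is a reduction. First I would reduce to the case $M=\D^m$ with $r$ finite. Fixing a countable family of disk embeddings $Q_\ell\colon\D^m\hookrightarrow M$ as in \eqref{prodembballeq}, the map $\{Q_\ell^*\}_\ell\colon\Cr rMk\hookrightarrow\bigl(\Cr{r}{\D^m}{k}\bigr)^{L}$ is a closed topological embedding into a countable product of separable Banach spaces (here $\D^m$ is compact and $r\in\N$). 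Each $Q_\ell^*x_n=x_n\circ Q_\ell$ is again a GRF, the $(Q_\ell^*x_n)_n$ remain independent, and $Q_\ell^*X_n=\sum_{j\le n}Q_\ell^*x_j$. One then checks that (1), (2), (3) for $M$ are equivalent to the conjunction over $\ell$ of the same statements for $\D^m$: for (1) this is a countable intersection of almost-sure events; for (3) one patches the pointwise limits over the cover; for (2) one direction is Corollary~\ref{inducedmap} applied to $\{Q_\ell^*\}$, and the other uses Prohorov together with $\P\{X_n\notin\bigcap_\ell(Q_\ell^*)^{-1}(\mathscr{A}_\ell)\}\le\sum_\ell\P\{Q_\ell^*X_n\notin\mathscr{A}_\ell\}$ and the fact that $\bigcap_\ell(Q_\ell^*)^{-1}(\mathscr{A}_\ell)$ is compact in $\Cr rMk$ whenever the $\mathscr{A}_\ell$ are (the image of the embedding is closed and $\prod_\ell\mathscr{A}_\ell$ is compact by Tychonoff); since the limiting measures are Gaussian, relative compactness in $\mathscr{P}$ and in $\G$ coincide by closedness of $\G$ in $\mathscr{P}$.

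For $r=\infty$ I would then reduce to all finite $r$: $\Cr\infty Mk$ carries the projective-limit topology, so (1) for $r=\infty$ is the conjunction over $r\in\N$ of (1) for finite $r$, and (3) does not involve $r$. For (2) I would invoke Theorem~\ref{infproper}: if $\{[X_n]\}$ is relatively compact in $\G(E^r)$ for every finite $r$, then it is tight there, so by the equivalence of moments of a Gaussian seminorm (as used in Theorem~\ref{Gaussinequality}) we get $\sup_n\E\|X_n\|_{Q,r}<\infty$ and hence $\sup_n\|K_{X_n}\|_{Q\times Q,(r,r)}<\infty$ for all $Q$ and $r$, which by Theorem~\ref{infproper} forces relative compactness in $\G(E^\infty)$. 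This upgrade is genuinely necessary, since $\K^r$ is not proper for finite $r$ (Theorem~\ref{thm:counter}).

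On a separable Banach space $E=\Cr{r}{\D^m}{k}$ the summands $x_n$ are independent and symmetric ($-x_n$ has the same law as $x_n$ because the field is centered), and the evaluation functionals $f\mapsto f^i(p)$ lie in $E^*$ and generate $\mathcal{B}(E)$ by Lemma~\ref{borelemm1}, so Theorem~\ref{itonisiothm} applies. Its three conclusions become, respectively: $X_n$ converges a.s.\ in $E$; $\{[X_n]\}$ is tight in $\mathscr{P}(E)$; there is an $E$-valued $X$ with $X_n(p)^i\to X(p)^i$ in probability for all $p,i$. These are exactly our (1), (2), (3) on the disk, since a finite sum of independent GRFs is a GRF (so $[X_n]$ is Gaussian and tightness coincides with relative compactness in $\G(E)$) and an $E$-valued random variable is precisely a $\mathcal{C}^r$ random field; in particular, ``random field $X$'' in (3) should be understood as a $\mathcal{C}^r$ random field, which is how the criterion gets applied (Theorem~\ref{thm:rep}) and which is essential, as the hypothesis in Itô--Nisio's third condition that the limit be $E$-valued cannot be dropped. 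As a complement, the easy implications can be seen directly: (1)$\Rightarrow$(3) from continuity of the evaluation maps, and (1)$\Rightarrow$(2) from narrow convergence and continuity of the inclusion $\G(E^r)\hookrightarrow\mathscr{P}(E^r)$.

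The main obstacle is the reduction of the first two paragraphs rather than the appeal to Itô--Nisio: one must be sure that relative compactness (equivalently, by Prohorov, tightness) really does localize to the Banach pieces $\Cr{r}{\D^m}{k}$ --- which rests on the closedness of the image of \eqref{prodembballeq} and on the subadditivity estimate above --- and, for $r=\infty$, that relative compactness in every $\G(E^r)$ forces it in $\G(E^\infty)$, a point which, in view of Theorem~\ref{thm:counter}, is false without the Gaussian structure and must be routed through the covariance estimates of Section~\ref{sec:spasmo}.
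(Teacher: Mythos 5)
Your proposal is correct and follows essentially the same route as the paper's proof: the easy implications $(1)\Rightarrow(2)$ and $(1)\Rightarrow(3)$ are handled directly, and the converses are obtained by reducing, via the embeddings $Q_\ell$ of \eqref{prodembballeq}, to the case of a compact disk with $r$ finite, where $\Cr{r}{\D^m}{k}$ is a separable Banach space and the It\^o--Nisio theorem applies to the evaluation functionals. The one divergence is that your upgrade of condition (2) from all finite $r$ to $r=\infty$ via Theorem \ref{infproper} is superfluous: for $(2)\Rightarrow(1)$ one only needs the easy direction, namely that relative compactness in $\G(E^\infty)$ pushes forward to $\G(E^r)$ along the continuous inclusion, while $(1)\Rightarrow(2)$ is already proved directly from almost sure (hence narrow) convergence.
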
 

\begin{proof}We prove both that $(1)\iff (2)$ and $(3)\iff(1)$. We repeatedly use the fact that a.s. convergence implies convergence in probability, which in turn implies convergence in distribution (narrow convergence).

$(1)\Rightarrow (2)$ This descends directly from the fact that almost sure convergence implies narrow convergence.

$(1)\Rightarrow (3)$ This step is also clear, since the almost sure convergence of $X_n$ to some random field $X$ implies that for any $p\in M$ the sequence of random vectors $X_n(p)$ converges to $X(p)$ almost surely and hence also in probability.

$(3)\Rightarrow (1)$ and $(2)\Rightarrow (1)$
Let $Q_\ell\colon D\hookrightarrow M$, be a countable family of embeddings of the compact disk, as in \eqref{prodembballeq}. Note that if $\{X_n\}_n$ is tight in $\g rMk$ (i.e. $\mu_n$ is tight in $\G(E^r)$), then $\{X_n\circ Q_\ell\}_n$ is tight in $\g rDk$. Moreover,
if $X_n\circ Q_\ell \to X\circ Q_\ell$ almost surely in $\Cr rDk$, for every $\ell\in \N$, then $X_n\to X$ almost surely in $\Cr rMk$. Therefore it is sufficient to prove the theorem in the case $M=D$. For analogous reasons, we can assume that $r$ is finite.

The topological vector space $E=\Cr rDk$ has the topology of a separable real Banach space, with norm 
\be 
\|\cdot\|_E=\|\cdot \|_{\text{id}_D, r}.
\ee
Since the $\sigma$-algebra $\mathcal{B}(\mathcal C^{r}(M, \R^k))$ is generated by sets of the form $\{f: f(p)\in A\}$, where $p\in M$ and $A\subset \R^k$ is open and since Gaussian variables are symmetric, we can conclude applying the Ito-Nisio Theorem \ref{itonisiothm} to the sequence $X_n$ of random elements of $E^r$.
\end{proof}
\begin{proof}[Proof of Theorem \ref{thm:rep}] Let $\{h_{n}\}_{n\in \N}$ be a Hilbert orthonormal basis for $\mathcal{H}_X$ and set $\xi_n=\rho_X^{-1}(h_n)$ (it is a family of independent, real Gaussian variables). From equation \eqref{eq:rep} we get that for every $p\in M$ and $j=1,\dots,k$ we have convergence in probability for the series:
\be
\label{eq:ppp}X(p)=\lim_{n\to \infty} \sum_{m\leq n}h_n(p) \xi_n.
\ee
Then, the a.s. convergence of the above series in $\mathcal{C}^r(M, \R^k)$ follows from point (1) of Theorem \ref{converepr}.
\end{proof}
\subsection{The support of a Gaussian Random Field}
By definition (see equation \eqref{eq:defsupport}), the support of $X\in\g rMk$ has the property that if it intersects an open set $U\subset E^r$, then $\P\{X\in U\}>0$. The following proposition guarantees that the converse is also true, namely that if $\P\{X\in U\}>0$, then $U\cap \spt(X)\neq \emptyset$.

\begin{prop}
The support of $X\in \g rMk$ is the smallest closed set $C\subset E^r$ such that $\P\{X\in C\}=1$. 
\end{prop}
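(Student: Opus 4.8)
The plan is to establish three facts and combine them: that $\spt(X)$ is closed, that $\P\{X\in\spt(X)\}=1$, and that $\spt(X)$ is contained in every closed set of full probability. The first two say that $\spt(X)$ is \emph{a} closed set of full probability, and the third says it is the smallest one.

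First I would check that $E^r\setminus\spt(X)$ is open. If $f\notin\spt(X)$, then by the defining property \eqref{eq:defsupport} there is an open neighborhood $U\ni f$ with $\P\{X\in U\}=0$; but then every $g\in U$ also admits $U$ itself as a neighborhood of zero probability, so $U\cap\spt(X)=\emptyset$. Thus the complement of $\spt(X)$ is a union of such open sets, and $\spt(X)$ is closed.

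Next, to prove $\P\{X\in\spt(X)\}=1$, I would use that $E^r=\Cr rMk$ is a Polish space, in particular second countable (Section \ref{sec:spasmo}). Fix a countable basis $\{B_n\}_{n\in\N}$ for the topology of $E^r$. By the previous paragraph, $E^r\setminus\spt(X)$ is the union of those $B_n$ contained in it, and each such $B_n$ satisfies $\P\{X\in B_n\}=0$, since any of its points has $B_n$ as a zero-probability neighborhood. Countable subadditivity then gives
\be
\P\{X\in E^r\setminus\spt(X)\}\le\sum_{n\,:\,B_n\cap\spt(X)=\emptyset}\P\{X\in B_n\}=0,
\ee
so $\spt(X)$ is a closed set of full probability.

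Finally, let $C\subset E^r$ be any closed set with $\P\{X\in C\}=1$, and suppose for contradiction that $f\in\spt(X)\setminus C$. Since $C$ is closed, $E^r\setminus C$ is an open neighborhood of $f$, hence $\P\{X\in E^r\setminus C\}>0$ by definition of the support, contradicting $\P\{X\in C\}=1$. Therefore $\spt(X)\subseteq C$, which together with the previous step proves the claim. (In particular, if $U$ is open with $\P\{X\in U\}>0$, then $U\cap\spt(X)\neq\emptyset$: otherwise $X\in U$ would force $X\notin\spt(X)$, an event of probability zero — this is the converse asserted just before the Proposition.) There is no deep obstacle here; the one step that genuinely uses the structure of $E^r$, and hence the one to state carefully, is the second-countability invoked in the middle paragraph, which is available precisely because $\Cr rMk$ is Polish.
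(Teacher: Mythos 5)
Your overall strategy is the same as the paper's: write the complement of $\spt(X)$ as a union of zero-probability open sets, invoke second countability to reduce to a countable union so that $\P\{X\in\spt(X)\}=1$, and note that minimality is immediate from the definition of the support. However, the middle step as written is circular. You decompose $E^r\setminus\spt(X)$ as the union of all basis elements $B_n$ contained in it and assert $\P\{X\in B_n\}=0$ ``since any of its points has $B_n$ as a zero-probability neighborhood'' --- but that $B_n$ is a zero-probability neighborhood is precisely the claim being justified. Your first paragraph only shows that each $f\notin\spt(X)$ admits \emph{some} zero-probability open neighborhood $U_f$; a basis element contained in the complement may be strictly larger than every such $U_f$, and proving it has probability zero requires exactly the Lindel\"of-type covering argument you are in the middle of carrying out (the claim is true, but not for the reason given). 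The repair is one line: instead of taking all basis elements contained in the complement, take only those $B_n$ contained in some open $U$ with $\P\{X\in U\}=0$. These still cover $E^r\setminus\spt(X)$ (for $f\notin\spt(X)$ choose $U_f$ as above, then a basis element with $f\in B_n\subset U_f$), and each satisfies $\P\{X\in B_n\}\le\P\{X\in U\}=0$ by monotonicity; your countable subadditivity estimate then goes through verbatim. The first and third paragraphs are correct as written.
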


\begin{proof}
By definition we can write the complement of $\textrm{supp}(X)$ as
\be
\left(\text{supp}(X)\right)^c=\bigcup\{U\subset \mathcal{C}^{r}(M, \R^k) \text{ open such that }\P\{X\in U\}=0\}.
\ee

Consequently $\textrm{supp}(X)$ equals the intersection of all closed sets $C\subset E^r$ such that $\P\{X\in C\}=1$, hence it is closed. Since $E^r$ is second countable the union above and the resulting intersection of closed sets can be taken over a countable family, so that $\P\{X\in \text{supp}(X)\}=1$.
\end{proof}
\begin{remark}
Assume that $X_n\nrw X \in \g rMk$ and recall Portmanteau's theorem (see \cite[Theorem 3.1]{Billingsley}). Then, for any open set $U\subset \Cr rMk$ such that $U\cap \text{supp}(X)\neq \emptyset$, there is a constant $p_U=\frac12 \P\{X\in U\}>0$ such that for $n$ big enough, one has
\be 
\P\{X_n\in U\}\ge p_U.
\ee
In particular, this implies that
\be 
\text{supp}(X)\subset \bigcap_{n_0}\overline{\left(\bigcup_{n\ge n_0}\text{supp}(X_n)\right)}=\limsup_{n\to \infty}\text{supp}(X
_N).
\ee
\end{remark}

\begin{thm}[The support of a Gaussian random map]\label{thm:sputo}Let $X\in \mathcal{G}^{r}(M, \R^k)$. Let $\{f_n\}_{n\in \N}\subset E^r$ and consider a sequence $\{\xi_n\}_{n\in\N}$ of independent, standard Gaussians. Assume that the series $\sum_{n\in \N}\xi_n f_n$ converges in $E^r$ to $X$ almost surely. Then
\be \mathrm{supp}(X)=\overline{\mathrm{span}\{f_n\}_{n\in \N}}^{E^r}.\ee
\end{thm}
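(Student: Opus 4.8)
The plan is to prove the two inclusions separately, relying on the characterization of the support as the smallest closed set of full measure (the Proposition preceding the statement). Write $V=\overline{\mathrm{span}\{f_n\}_{n\in\N}}^{E^r}$.

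For $\spt(X)\subseteq V$ I would observe that the partial sums $S_N=\sum_{n\le N}\xi_nf_n$ all lie in $\mathrm{span}\{f_n\}\subseteq V$, and by hypothesis $S_N\to X$ almost surely in $E^r$; since $V$ is closed, $X$ lies in $V$ on the full-measure event of convergence, so $\P\{X\in V\}=1$ and hence $\spt(X)\subseteq V$.

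For $V\subseteq\spt(X)$, since $\spt(X)$ is closed it is enough to check that every finite linear combination $v=\sum_{n\le N_0}a_nf_n$ lies in $\spt(X)$. Fix such a $v$ and an arbitrary neighborhood $U$ of $v$; as $E^r$ is locally convex, pick an open convex neighborhood $W$ of $0$ with $v+W\subseteq U$, so that $\tfrac12W+\tfrac12W\subseteq W$. For $N\ge N_0$ split $X=S_N+R_N$ with $R_N=\sum_{n>N}\xi_nf_n=X-S_N$; the random elements $S_N$ and $R_N$ are independent, being measurable functions of disjoint blocks of the sequence $\{\xi_n\}$. Since $\{S_N\in v+\tfrac12W\}\cap\{R_N\in\tfrac12W\}\subseteq\{X\in v+W\}$, independence gives
\be
\P\{X\in U\}\ \ge\ \P\{S_N\in v+\tfrac12W\}\cdot\P\{R_N\in\tfrac12W\}.
\ee
It then remains to see that both factors are positive once $N$ is large. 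On the one hand $R_N=X-S_N\to0$ almost surely in $E^r$, hence $\P\{R_N\in\tfrac12W\}\to1$. On the other hand, writing $T\colon\R^N\to E^r$ for the continuous linear map $T(c)=\sum_{n\le N}c_nf_n$ and $a=(a_1,\dots,a_{N_0},0,\dots,0)\in\R^N$ so that $T(a)=v$, one has $\{S_N\in v+\tfrac12W\}=\{(\xi_1,\dots,\xi_N)\in a+T^{-1}(\tfrac12W)\}$, and $a+T^{-1}(\tfrac12W)$ is an open neighborhood of $a$ in $\R^N$, which therefore carries positive mass under the standard Gaussian law of $(\xi_1,\dots,\xi_N)$. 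Thus $\P\{X\in U\}>0$ for every neighborhood $U$ of $v$, i.e. $v\in\spt(X)$, and taking closures yields $V\subseteq\spt(X)$.

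The argument is essentially soft; the one place that needs care is the decomposition $X=S_N+R_N$ into independent pieces, used together with the convexity of $W$ to convert the ``additive'' event $\{X\in v+W\}$ into an intersection whose two factors are controlled respectively by the finite-dimensional full-support property of Gaussian vectors and by the almost sure smallness of the tail $R_N$. No compactness, Sard-type, or Cameron-Martin input is needed; in fact this statement is logically prior to the identity $\spt(X)=\overline{\mathcal{H}_X}^{E^r}$, which it recovers upon taking $\{f_n\}$ to be a Hilbert basis of $\mathcal{H}_X$.
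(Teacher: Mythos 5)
Your proof is correct and follows essentially the same route as the paper's: the inclusion $\spt(X)\subseteq V$ from closedness of $V$ plus almost sure membership of the limit, and the reverse inclusion by splitting $X=S_N+R_N$, exploiting the independence of the two blocks, the smallness in probability of the tail, and the positivity of the finite-dimensional Gaussian law on open neighborhoods. The only cosmetic differences are that you work with general convex neighborhoods and the preimage $T^{-1}(\tfrac12 W)$ where the paper uses seminorm balls and an explicit norm-equivalence constant $A_N$, and that you state the independence of $S_N$ and $R_N$ explicitly where the paper uses it silently in the bound $\P\{A\cap B\}\ge\tfrac12\P\{B\}$.
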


\begin{proof}
We start by observing that  $X\in \overline{\text{span}\{f_n\}_n}$ with $\P=1$, thus the first inclusion ``$\subset$'' is proved.
Let now $c=\sum_{n=0}^{N_0}a_nf_n$ and let $U_c\subset \Cr rMk$ be an open neighborhood of $c$ of the form
\be 
U_c=\left\{ f\in \Cr rMk \colon \|f-c\|_{Q, r}<\e\right\}.
\ee
for some embedding $Q$.
Denote by $S_N=\sum_{n\leq N}\xi_n f_n$. Observe that if $N\ge N_0$, then $S_{N}-c\in \text{span}\{f_1\dots f_{N}\}$, which is a finite dimensional vector space, hence there is a constant $A_{N}>0$ such that $\|\sum_{n=0}^N a_nf_n\|_{Q,r}\le A_N\max\{|a_0|\dots |a_N|\}$. 
By the convergence in probability of $S_N$ to $X$, there exists $N>N_0$ so big that $\P\left\{\|X-S_{N}\|_{Q, r}\geq \frac{\epsilon}2\right\}<\frac12$.
Thus, setting $a_n=0$ for all $n>N_0$, we have:
\be
\begin{aligned}
\P\{X\in U_c\}&\ge \P\left\{\|X-S_{N}\|_{Q, r}<\frac{\e}2, \|S_{N}-c\|_{Q,r}<\frac\e 2\right\}\\
&\ge \P\left\{\|S_{N}-c\|_{Q,r}<\frac\e 2\right\}\frac12  \\
&\ge \left(\prod_{n=0}^{N}\P\left\{|\xi_n-a_n|<\frac{\e}{2A_{N}}\right\} \right)\frac12\\
&>0.
\end{aligned}
\ee

Every open neighborhood of $c$ in $\Cr rMk$ contains a subset of the form of $U_c$, therefore $c\in \text{supp}(X)$. Since $\text{supp}(X)$ is closed in $E^r$, we conclude.
\end{proof}
\begin{cor}\label{cor:sputoCMapp} 
Let $X\in\g rMk$ and let $\mathcal{H}_X\subset E^r$ be its Cameorn-Martin space.
\be\label{eq:sputoCMapp}
\spt(X)=\overline{\mathcal{H}_X}^{E^r}.
\ee
\end{cor}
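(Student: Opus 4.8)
The plan is to deduce this corollary by combining the Representation Theorem \ref{thm:rep} with Theorem \ref{thm:sputo}. Since $M$ is second countable, the Hilbert space $\mathcal{H}_X$ (equivalently $\Gamma_X$) is separable, so we may fix a Hilbert-orthonormal basis $\{h_n\}_{n\in\N}$ of $\mathcal{H}_X$. By Theorem \ref{thm:rep} there is a sequence $\{\xi_n\}_{n\in\N}$ of independent standard Gaussians such that $\sum_{n\in\N}\xi_n h_n$ converges in $E^r$ to $X$ almost surely. Applying Theorem \ref{thm:sputo} to the coefficient functions $f_n=h_n$ then yields
\[
\spt(X)=\overline{\mathrm{span}\{h_n\}_{n\in\N}}^{E^r}.
\]

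It remains only to identify this set with $\overline{\mathcal{H}_X}^{E^r}$. The inclusion $\overline{\mathrm{span}\{h_n\}}^{E^r}\subseteq\overline{\mathcal{H}_X}^{E^r}$ is immediate, since each $h_n$ lies in $\mathcal{H}_X$. For the reverse inclusion I would use that, by Proposition \ref{propo:injection}, the map $\rho_X\colon\Gamma_X\to E^r$ is continuous, so the inclusion $\mathcal{H}_X\hookrightarrow E^r$ is continuous; equivalently, the Hilbert-space topology on $\mathcal{H}_X$ is finer than the subspace topology it inherits from $E^r$. Since $\{h_n\}$ is an orthonormal basis, $\mathrm{span}\{h_n\}$ is dense in $\mathcal{H}_X$ for the Hilbert norm, hence a fortiori dense in $\mathcal{H}_X$ with respect to the coarser $E^r$-topology; therefore $\mathcal{H}_X\subseteq\overline{\mathrm{span}\{h_n\}}^{E^r}$, and taking $E^r$-closures gives $\overline{\mathcal{H}_X}^{E^r}\subseteq\overline{\mathrm{span}\{h_n\}}^{E^r}$. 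This establishes \eqref{eq:sputoCMapp}.

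I do not expect a genuine obstacle here: the mathematical content is entirely carried by Theorems \ref{thm:rep} and \ref{thm:sputo}, and the only thing to verify is the elementary topological comparison above. The main points of care are merely bookkeeping ones — checking that all hypotheses of Theorem \ref{thm:sputo} are in force (in particular that the Karhunen–Loève series of Theorem \ref{thm:rep} converges \emph{in $E^r$}, not just pointwise), and noting that the resulting description of $\spt(X)$ is manifestly independent of the chosen orthonormal basis, as it must be since $\spt(X)$ depends only on the law $[X]$; this is consistent with the observation recorded after Proposition \ref{prop:CamMa} that $\mathcal{H}_X$ itself depends only on $K_X$.
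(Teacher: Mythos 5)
Your proof is correct and follows exactly the route the paper takes, which simply cites Theorems \ref{thm:rep} and \ref{thm:sputo}; you additionally supply the one gluing step the paper leaves implicit, namely that $\overline{\mathrm{span}\{h_n\}}^{E^r}=\overline{\mathcal{H}_X}^{E^r}$, and your justification via the continuity of $\rho_X$ (Proposition \ref{propo:injection}) is sound.
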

\begin{proof}
It is a consequence of Theorems \ref{thm:rep} and \ref{thm:sputo}.
\end{proof}
%
%
%
%
%

\bibliographystyle{plain}

\bibliography{Differential_Topology_Of_Gaussian_Random_Fields}

\begin{thebibliography}{10}

\bibitem{AdlerTaylor}
R.~J. Adler and J.~E. Taylor.
\newblock {\em Random fields and geometry}.
\newblock Springer Monographs in Mathematics. Springer, New York, 2007.

\bibitem{ambrofuscopalla}
L.~Ambrosio, N.~Fusco, and D.~Pallara.
\newblock {\em Functions of Bounded Variation and Free Discontinuity Problems}.
\newblock Oxford Science Publications. Clarendon Press, 2000.

\bibitem{Billingsley}
Patrick Billingsley.
\newblock {\em Convergence of probability measures}.
\newblock Wiley Series in Probability and Statistics: Probability and
  Statistics. John Wiley \& Sons, Inc., New York, second edition, 1999.
\newblock A Wiley-Interscience Publication.

\bibitem{bogachev}
V.I. Bogachev and American~Mathematical Society.
\newblock {\em Gaussian Measures}.
\newblock Mathematical surveys and monographs. American Mathematical Society,
  1998.

\bibitem{EdelmanKostlan95}
Alan Edelman and Eric Kostlan.
\newblock How many zeros of a random polynomial are real?
\newblock {\em Bull. Amer. Math. Soc. (N.S.)}, 32(1):1--37, 1995.

\bibitem{Hirsch}
Morris~W. Hirsch.
\newblock {\em Differential topology}, volume~33 of {\em Graduate Texts in
  Mathematics}.
\newblock Springer-Verlag, New York, 1994.
\newblock Corrected reprint of the 1976 original.

\bibitem{ItoNisio}
Kiyosi Ito and Makiko Nisio.
\newblock On the convergence of sums of independent banach space valued random
  variables.
\newblock {\em Osaka J. Math.}, 5(1):35--48, 1968.

\bibitem{counterSardKupka}
Ivan Kupka.
\newblock Counterexample to the morse-sard theorem in the case of
  infinite-dimensional manifolds.
\newblock {\em Proceedings of the American Mathematical Society},
  16(5):954--957, 1965.

\bibitem{NazarovSodin2}
F.~Nazarov and M.~Sodin.
\newblock Asymptotic laws for the spatial distribution and the number of
  connected components of zero sets of {G}aussian random functions.
\newblock {\em Zh. Mat. Fiz. Anal. Geom.}, 12(3):205--278, 2016.

\bibitem{Parth}
K.R. Parthasarathy.
\newblock {\em Probability Measures on Metric Spaces}.
\newblock Ams Chelsea Publishing. Academic Press, 2005.

\bibitem{schwartz1957}
L.~Schwartz.
\newblock {\em Th{\'e}orie des distributions}.
\newblock Number v. 1-2 in Actualit{\'e}s scientifiques et industrielles.
  Hermann, 1957.

\bibitem{smalesard}
S.~Smale.
\newblock An infinite dimensional version of sard's theorem.
\newblock {\em American Journal of Mathematics}, 87(4):861--866, 1965.

\end{thebibliography}

\end{document}